\DeclareMathOperator{\Gra}{{Gr}}
\DeclareMathOperator{\pt}{{pt}}
\DeclareMathOperator{\ind}{{ind}}
\DeclareMathOperator{\reg}{{reg}}
\DeclareMathOperator{\Div}{{Div}}
\newcommand{\bA}{{\mathbb A}}
\newcommand{\bC}{{\mathbb C}}
\newcommand{\bD}{{\mathbb D}}
\newcommand{\bG}{{\mathbb G}}
\newcommand{\bL}{{\mathbb L}}
\newcommand{\bR}{{\mathbb R}}
\newcommand{\bZ}{{\mathbb Z}}
\newcommand{\cA}{{\mathcal A}}
\newcommand{\cD}{{\mathcal D}}
\newcommand{\cE}{{\mathcal E}}
\newcommand{\cF}{{\mathcal F}}
\newcommand{\cG}{{\mathcal G}}
\newcommand{\cI}{{\mathcal I}}
\newcommand{\cK}{{\mathcal K}}
\newcommand{\cL}{{\mathcal L}}
\newcommand{\cN}{{\mathcal N}}
\newcommand{\cO}{{\mathcal O}}
\newcommand{\cP}{{\mathcal P}}
\newcommand{\cT}{{\mathcal T}}
\newcommand{\cV}{{\mathcal V}}
\newcommand{\cY}{{\mathcal Y}}
\newcommand{\cZ}{{\mathcal Z}}
\newcommand{\Ranp}{{\on{Ran}}}
\newcommand{\eff}{\text{eff}}
\DeclareMathOperator*{\colim}{colim}
\newcommand{\nc}{\newcommand}
\nc{\renc}{\renewcommand}
\nc{\ssec}{\subsection}
\nc{\sssec}{\subsubsection}
\nc{\on}{\operatorname}
\nc\Gr{\on{Gr}}
\nc\Fl{\on{Fl}}
\newtheorem{thm}[subsubsection]{Theorem}
\newtheorem{conj}{Conjecture}
\DeclareMathOperator{\DGCat}{{DGCat}}
\DeclareMathOperator{\QCoh}{{QCoh}}
\DeclareMathOperator{\Maps}{{Maps}}
\DeclareMathOperator{\oblv}{{oblv}}
 \DeclareMathOperator{\Bun}{{Bun}}
\DeclareMathOperator{\codim}{{codim}}
\DeclareMathOperator{\gr}{{gr}}
\DeclareMathOperator{\irr}{{irr}}
\DeclareMathOperator{\Lie}{{Lie}}
\DeclareMathOperator{\LocSys}{{LS}}
\DeclareMathOperator{\Loc}{{Loc}}
\DeclareMathOperator{\Sym}{{Sym}}
\DeclareMathOperator{\bfmod}{{-\mathbf{mod}}}
\newcommand{\limto}{{\displaystyle\lim_{\longrightarrow}}}
\newcommand{\rightlim}{\mathop{\limto}}
\newcommand{\leftlim}{\mathop{\displaystyle\lim_{\longleftarrow}}}
\newcommand{\limfromn}{\leftlim\limits_{\raise3pt\hbox{$n$}}}
\newcommand{\limton}{\rightlim\limits_{\raise3pt\hbox{$n$}}}
\newcommand{\rightlimit}[1]{\mathop{\lim\limits_{\longrightarrow}}\limits%
	_{\raise3pt\hbox{$\scriptstyle #1$}}}
\newcommand{\leftlimit}[1]{\mathop{\lim\limits_{\longleftarrow}}\limits%
	_{\raise3pt\hbox{$\scriptstyle #1$}}}
\DeclareMathOperator{\Id}{{Id}}
\DeclareMathOperator{\Aut}{{Aut}}
 \DeclareMathOperator{\Ind}{{Ind}}
\DeclareMathOperator{\Ker}{{Ker}} \DeclareMathOperator{\id}{{id}}
 \DeclareMathOperator{\Mmod}{{-mod}}
\DeclareMathOperator{\Nilp}{{Nilp}} \DeclareMathOperator{\op}{{op}}
\DeclareMathOperator{\Vect}{{Vect}}
\DeclareMathOperator{\PGL}{{PGL}}
\DeclareMathOperator{\GL}{{GL}}
\DeclareMathOperator{\Irr}{{Irr}}
\DeclareMathOperator{\Ad}{{Ad}}
\DeclareMathOperator{\Rep}{{Rep}}
\newcommand{\Rmnum}[1]{\expandafter\@slowromancap\romannumeral #1@}
\newtheorem{pr}[subsubsection]{Proposition}
\newtheorem{lm}[subsubsection]{Lemma}
\newtheorem{cor}[subsubsection]{Corollary}
\newtheorem{cnstr}[subsubsection]{Construction}
\newtheorem{df}[subsubsection]{Definition}
\newtheorem{rem}[subsubsection]{Remark}
\newtheorem{ex}[subsubsection]{Example}
\newtheorem{ntn}[subsubsection]{Notation}
\numberwithin{equation}{section}
\newcommand{\dR}{\mathrm{dR}}
\DeclareMathOperator{\D}{{DMod}}
\nc{\WFactAlg}{\on{WFactAlg}}
\nc{\FactAlg}{\on{FactAlg}}
\nc{\bFact}{\mathbf{Fact}}
\nc{\FactAlgCat}{\mathbf{FactAlgCat}}
\nc{\WFactAlgCat}{\mathbf{WFactAlgCat}}
\nc{\LWFactAlgCat}{\mathbf{LWFactAlgCat}}
\nc{\WFactModCat}{\on{-}\!\mathbf{WFactModCat}}
\nc{\LWFactModCat}{\on{-}\!\mathbf{LWFactModCat}}
\nc{\FactModCat}{\on{-}\!\mathbf{FactModCat}}
\nc{\WFactMod}{\on{-WFactMod}}
\nc{\FactMod}{\on{-FactMod}}
\nc{\weak}{{\on{weak}}}
\nc{\lax}{{\on{lax}}}
\nc{\str}{{\on{str}}}
\nc{\laxun}{{\on{laxun}}}
\nc{\strun}{{\on{strun}}}
\nc{\enh}{{\on{enh}}}
\nc{\an}{{\on{an}}}
\nc{\re}{{\on{Re}}}
\nc{\Shv}{{\on{Shv}}}
\nc{\Tot}{{\on{Tot}}}
\nc{\eq}{{\on{equiv}}}
\nc{\rh}{{\on{RH}}}
\nc{\rs}{{\on{r.s.}}}
\nc{\hol}{{\on{hol}}}
\nc{\Kos}{{\on{Kos}}}
\nc{\SL}{{\on{SL}}}
\nc{\redu}{{\on{red}}}
\nc{\CC}{{\on{CC}}}
\nc{\Sph}{{\on{Sph}}}
\nc{\Av}{{\on{Av}}}
\nc{\temp}{{\on{Wh-temp}}}
\nc{\atemp}{{\on{Wh-anti-temp}}}
\nc{\Whit}{{\on{Whit}}}
\nc{\irreg}{{\on{irreg}}}
\nc{\coeff}{{\on{coeff}}}
\nc{\loc}{{\on{loc}}}
\nc{\rel}{{\on{rel}}}
\nc{\lvl}{{\on{lvl}}}
\nc{\Sat}{{\on{Sat}}}
\nc{\SingS}{{\on{SS}}}
\begin{document}
	
	\title[Non-vanishing of quantum geometric Whittaker coefficients]{Non-vanishing of quantum geometric Whittaker coefficients}
	
	\author[E.~Bogdanova]{Ekaterina Bogdanova}
	\address{Harvard University,  USA}
	\email{ebogdanova@math.harvard.edu}
	
	\begin{abstract}
	We prove that for any reductive group $G$ of adjoint type cuspidal automorphic twisted D-modules have non-vanishing quantum Whittaker coefficients. The argument provides a  microlocal interpretation of quantum Whittaker coefficients for any $\check{\Lambda}^+$-valued divisor under some hypothesis on singular support.
	\end{abstract}
	
	\maketitle
	
	\tableofcontents
	
	\section{Introduction.}
	
		\subsubsection{} Recall that any cuspidal (i.e. with zero constant term) modular form has at least one non-zero Fourier coefficient. The statement also holds for automorphic forms for $\GL_n$ and Whittaker coefficients. For $\GL_2$, it is also true for {\it metaplectic} automorphic forms (\cite{GHPS}), which are natural generalizations to arbitrary groups of modular forms with fractional weights. However, it fails for arbitrary $G$ (\cite{HPS}). 
	
	\subsubsection{} Miraculously, the geometric analog of the question, non-vanishing  of Whittaker coefficients for automorphic sheaves, holds. This was proved recently by Faergeman-Raskin (\cite{FR}), and turned out to be an important step in the proof of the global unramified geometric Langlands conjecture. In the present paper, we study this question in the context of the {\it quantum Geometric Langlands program}. Namely, we prove that the non-vanishing  of Whittaker coefficients for {\it metaplectic} automorphic sheaves holds as well. 
	
	\begin{comment}
	\subsection{Arithmetic metaplectic theory.}
	\subsubsection{} Let $\mathbf{F}$ be a local field and $G$ an algebraic group over  $\mathbf{F}$. Suppose that we are given a central extension
	\begin{equation}
		1 \rightarrow E^{\times} \rightarrow \widetilde{G(\mathbf{F})} \rightarrow G(\mathbf{F}) \rightarrow 1,
	\end{equation}
	where $E$ is another field. Then we can consider \emph{metaplectic representations of $G(\mathbf{F})$}, i.e. $E$-linear representation of $\widetilde{G(\mathbf{F})}$ on which $E^{\times} $ acts via the by the tautological character.
	 
	 \subsubsection{} There is also a global version of the above story. Let $\mathbf{F}$ now be a global field, and let $\bA_{\mathbf{F}}$
	 be the corresponding ring of adeles. Consider now a central extension 	
	 	\begin{equation}
	 1 \rightarrow E^{\times} \rightarrow \widetilde{G(\bA_{\mathbf{F}})} \rightarrow G(\bA_{\mathbf{F}}) \rightarrow 1,
	 \end{equation}
	 equipped with a splitting over $G(\mathbf{F}) \hookrightarrow G(\bA_{\mathbf{F}})$. Then we can study \emph{metaplectic automorphic functions on  $G(\mathbf{F})$}, i.e. $E$-valued functions on the quotient $$\widetilde{G(\bA_{\mathbf{F}})} / G(\mathbf{F}).$$
	 
	 Metaplectic automorphic functions on  $G(\mathbf{F})$ are natural generalizations to arbitrary groups $G$ of modular forms with fractional weights, which play an important role in classical modular form theory (e.g. theta functions associated to an integer lattice of odd rank). 
\end{comment}
	\subsection{Metaplectic and quantum Geometric Langlands theory.}
	\subsubsection{}  We will work over the field $k$ of complex numbers. Fix a smooth proper curve $X$ over an algebraically closed field of characteristic zero. One can observe that the main objects of study in the global geometric Langlands theory (the category of D-modules on the stack of $G$-bundles $\Bun_G$, the category of D-modules on the affine Grassmannian $\Gra_G$, etc) deform over the space of \emph{Kac-Moody levels} $\kappa$ of $G$, suggesting the existence of \emph{quantum} Langlands correspondence, deforming the usual one. For a fixed level $\kappa$ denote the corresponding category by $\D_{\kappa}(\Bun_G)$.

	This is the geometric counterpart of the space of metaplectic automorphic functions. Moreover, Kac-Moody levels admit duality as well: $\kappa$ gives a level $\check{\kappa}$ for the Langlands dual group $\check{G}$. Then the quantum geometric Langlands theory expects (roughly) the following 
	\begin{conj}[Gaitsgory-Lysenko, Drinfeld, Stoyanovsky...]\label{qGLC}
		\begin{equation}
			\D_{\kappa}(\Bun_G) \xrightarrow[\bL_{\kappa}]{\sim} \D_{-\check{\kappa}}(\Bun_{\check{G}}).
		\end{equation}
	\end{conj}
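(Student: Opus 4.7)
\textbf{Proof plan for Conjecture \ref{qGLC}.} The strategy adapts the Gaitsgory--Raskin program for the unramified geometric Langlands equivalence to the quantum/metaplectic setting. The plan is to construct $\bL_{\kappa}$ as the essentially unique functor compatible with the quantum Whittaker model, and then verify that it is an equivalence by reducing the statement to local/spectral input.

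First I would set up the quantum Whittaker category $\Whit_{\kappa}(\Gra_G)$ on the affine Grassmannian and exploit its factorization structure. By the Fundamental Local Equivalence (Gaitsgory--Lysenko, Campbell, Chen, et al.) one has a canonical identification $\Whit_{\kappa}(\Gra_G) \simeq \Whit_{-\check{\kappa}}(\Gra_{\check{G}})$ after appropriate dualization; this is the local seed exchanging the two sides. Globalizing via Ran-space factorization and Poincar\'e-type integration gives candidate comparison functors fitting into a diagram $\D_{\kappa}(\Bun_G) \to \Whit_{\kappa}(\Bun_G) \leftarrow \D_{-\check{\kappa}}(\Bun_{\check{G}})$, and the composition (after inverting the right-hand arrow in the appropriate sense) defines $\bL_{\kappa}$.

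The crucial step is showing that the quantum Whittaker coefficient functor $\coeff_{\kappa} : \D_{\kappa}(\Bun_G) \to \Whit_{\kappa}(\Bun_G)$ is conservative on a suitable ``cuspidal'' subcategory, and that its kernel is accounted for by quantum Eisenstein induction from proper Levi subgroups. This is precisely where the non-vanishing result proved in the present paper enters: it guarantees that no nonzero cuspidal metaplectic D-module is annihilated by $\coeff_{\kappa}$, ruling out the most basic obstruction to conservativity and anchoring the inductive construction of $\bL_{\kappa}$ on the cuspidal spectrum. The microlocal interpretation of $\coeff_{\kappa}$ mentioned in the abstract is what allows us to access cuspidality through the singular support condition.

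The remaining and genuinely hardest step is to extend the equivalence from the cuspidal layer to the whole category via parabolic induction. This would require a quantum deformation of Gaitsgory's miraculous duality and of the Drinfeld--Gaitsgory pseudoidentity functor controlling constant terms and Eisenstein series, together with a spectral decomposition compatible with quantum Hecke action at irrational level $\kappa$. I expect this Eisenstein/pseudoidentity layer --- rather than the cuspidal non-vanishing handled by the present paper --- to be the main obstacle: the deformation of the miraculous duality over the space of Kac--Moody levels is not yet understood in general, and at rational levels one must additionally confront small-quantum-group corrections on the spectral side.
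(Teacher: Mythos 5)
The statement you were asked to prove is labeled \emph{Conjecture} \ref{qGLC} in the paper and is attributed to Gaitsgory--Lysenko, Drinfeld, and Stoyanovsky precisely because it is open: the paper does not prove it, does not claim to prove it, and offers no argument for it. It appears in the introduction purely as motivation for studying quantum Whittaker coefficients. So there is no ``paper's own proof'' to compare your proposal against.

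Your write-up is not a proof either, and you essentially say so yourself: the Eisenstein/pseudo-identity layer, the quantum deformation of miraculous duality over the space of levels, and the small-quantum-group corrections at rational level are all flagged as ``not yet understood in general.'' What you have produced is a reasonable high-level sketch of the expected \emph{program} (local FLE as seed, globalize via factorization and Poincar\'e functors, use non-vanishing of Whittaker coefficients to control the cuspidal part, handle the rest by quantum Eisenstein series), and you correctly identify where the present paper's main theorem would slot in --- as the conservativity-on-cuspidals input, analogous to the role of Faergeman--Raskin's result in the proof of the classical GLC. But a proof plan with an acknowledged unfilled gap at the central step is not a proof, and you should not present it under that heading. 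If the assignment was to prove this statement, the correct answer is that it is an open conjecture that this paper contributes one ingredient toward, namely Theorems \ref{whitshvnilp} and \ref{generalintro}.
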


	\begin{rem}
		When $G$ is simple, a choice of $\kappa$ is just a choice of an element $c$ in the base field such that $\kappa = c \kappa_{\operatorname{b}}$ (here $\kappa_{\operatorname{b}}$ is the basic form, i.e. the one such that the short coroots have squared length two). Then the $\D_c(\Bun_G):=\D_{\kappa}(\Bun_G)$ is the category of D-modules on $\Bun_G$ twisted by the $(\frac{c - h^{\vee}}{2h^\vee})$-th power of the determinant line bundle, where $h^\vee$ is the dual Coxeter number of $G$. In this case, conjecture \ref{qGLC} reads as 
			\begin{equation}
		\D_{c}(\Bun_G) \xrightarrow[\bL_{\kappa}]{\sim} \D_{-\frac{1}{rc}}(\Bun_{\check{G}}),
		\end{equation}
		where $r=1$, 2 or 3 is the maximal multiplicity of arrows in the Dynkin diagram of $G$.
	\end{rem}

\begin{rem}
	In the limit $c \mapsto 0$, the category $\D_{-\frac{1}{rc}}(\Bun_{\check{G}})$ becomes $\QCoh(\LocSys_{\check{G}})$ (\cite[Section 6]{Z}). So Conjecture \ref{qGLC} limits to the global Geometric Langlands equivalence, conjectured by Beilinson and Drinfeld and established in \cite{GLCI}, \cite{GLCII}, \cite{GLCIII}, \cite{GLCIV},  and \cite{GLCV}. This equivalence (roughly) says that 
	\begin{equation}
	\D_{\kappa_{\operatorname{crit}}}(\Bun_G) \cong \QCoh(\LocSys_{\check{G}}).\footnote{In classical Geometric Langlands, the precise formulation involves a certain renormalization: one considers the category $\operatorname{IndCoh}_{\Nilp}(\LocSys_{\check{G}})$ instead. However, it is expected that in the quantum version one does not need this procedure.}
	\end{equation}
\end{rem}

	\subsubsection{}  We would also like to relate the automorphic side to a space of certain local systems/L-parameters, and this is where the geometric metaplectic Langlands theory comes in. 
	
	We will need the following construction from introduced in \cite[Section 6]{GL}. Using $\kappa$ we modify the root datum of $G$ to obtain a new reductive group $H$. Moreover, from the data of $\kappa$ we also get a gerbe $\cG_{Z_H}$ with respect to the center $Z_H$ of $H$, and a character $\epsilon: \pm \rightarrow Z_H$. This triple is called \emph{the metaplectic dual root datum}. 
	
	Then the $H$-local systems on $X$ twisted by $\cG_{Z_H}$ provide the desired space of spectral parameters (\cite[9.6.6]{GL}). In particular, we have the Hecke action of the factorization category $\Rep(H)$ on $\D_{\kappa}(\Bun_G)$, which conjecturally factors through the category of quasi-coherent sheaves on the space $\LocSys_H^{\cG_Z})$ of $\cG_{Z_H}$-twisted $H$-local systems on $X$.
	
	\subsubsection{} However, both arithmetic metaplectic theory and geometric quantum theory have a major defect: the Hecke action is more degenerate. For example, while in the usual geometric story (i.e. when $\kappa$ is the critical level) for an irreducible $\check{G}$-local system $\sigma$ there exists a unique Hecke eigensheaf $\cA_{\sigma}$ with eigenvalue $\sigma$, in the quantum setting for irrational $\kappa$ the group $H$ is trivial, and therefore the Hecke eigensheaf condition is trivial as well. But there is a tool that provides finer information than Hecke action: Whittaker coefficients, which is the main object of study in the present paper. 
	\subsection{Main result.}
	
	\subsubsection{} Our first result is concerned with a certain subcategory of $\D_{\kappa}(\Bun_G)$. This is the category $\Shv_{\kappa, \Nilp}(\Bun_G)$, where all the objects are (colimits of) twisted holonomic D-modules with regular singularities with singular support in $\Nilp \subset T^*Bun_G$, which is the zero fiber of the Hitchin fibration. Namely, we prove 
	\begin{thm}\label{whitshvnilp}
		Let $G$ be an adjoint  reductive group, $\kappa$ a rational level of $G$. Then for any cuspidal $\cF \in \Shv_{\kappa, \Nilp}(\Bun_G)$ there exists a $\check{\Lambda}^+$-valued divisor $D$ such that $\coeff_D(\cF) \neq 0$.
	\end{thm}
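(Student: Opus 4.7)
The plan is to extend the strategy of Faergeman--Raskin (\cite{FR}) to the quantum setting. The central new ingredient, announced in the abstract, is a \emph{microlocal interpretation} of the quantum Whittaker coefficient $\coeff_D$ for a $\check{\Lambda}^+$-valued divisor $D$, valid under a hypothesis on singular support. The proof combines this microlocal interpretation with the description of irreducible components of the global nilpotent cone $\Nilp \subset T^*\Bun_G$ and with cuspidality of $\cF$.

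First, I would set up the quantum Whittaker coefficient $\coeff_D$ using a $\kappa$-twisted Drinfeld--Beilinson compactification $\overline{\Bun}_{N^-, D}$, equipped with the pullback of the $\kappa$-gerbe from $\Bun_G$. The main technical step is then to prove that, for $\cF \in \Shv_{\kappa, \Nilp}(\Bun_G)$, the functor $\coeff_D$ has a microlocal description: up to an invertible factor, it computes the contribution of a specific irreducible component of $\Nilp$ to the characteristic cycle of $\cF$. Namely, if $D = \sum \check{\lambda}_i \cdot x_i$ with $\check{\lambda}_i \in \check{\Lambda}^+$, the corresponding component is (a Lagrangian inside) the closure of the locus of Higgs bundles whose underlying $G$-bundle admits a generic $B$-reduction with defect exactly $D$.

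Granted the microlocal interpretation, the conclusion is immediate. Since $\cF \neq 0$ has nilpotent singular support, some irreducible component of $\Nilp$ appears in its characteristic cycle. Cuspidality rules out all components associated to reductions to proper parabolics (these are detected by the corresponding constant term functors, which vanish on cuspidal objects), leaving only components coming from generic Borel reductions. The adjointness of $G$ ensures that every such component is indexed by a $\check{\Lambda}^+$-valued divisor (rather than by a more general cocharacter of the universal cover), and rationality of $\kappa$ guarantees that the $\kappa$-gerbe has finite monodromy, so that a reduction to a finite cover places us squarely in a setting where standard microlocal machinery applies. For any Borel-type component in the characteristic cycle of $\cF$, the microlocal interpretation produces a divisor $D$ with $\coeff_D(\cF) \neq 0$.

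The main obstacle will be establishing the microlocal interpretation itself. Three difficulties enter simultaneously: the factorization structure of the quantum Whittaker category is governed by the metaplectic dual root datum $(H, \cG_{Z_H}, \epsilon)$, so the combinatorics of $\check{\Lambda}^+$ has to be matched with that of $H$; microlocalization must be carried out compatibly with the $\kappa$-gerbe on $\Bun_G$; and the boundary strata of $\overline{\Bun}_{N^-, D}$ contribute additional Lagrangians to the characteristic cycle that must be shown not to interfere with the identification of $\coeff_D$ with the desired component. Producing this microlocal formula---of independent interest, and the content of the ``hypothesis on singular support'' clause of the abstract---is where the bulk of the work lies.
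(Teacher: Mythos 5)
Your high-level strategy---interpret $\coeff_D$ microlocally and argue that the characteristic cycle of a cuspidal nilpotent sheaf must have a ``Borel-type'' component---is in the same family as the paper's. But there are two genuine gaps that would stop the argument from working as you have written it.

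First, you claim the microlocal interpretation of $\coeff_D$ detects ``any Borel-type component'' of $\CC(\cF)$. This is not the case, and the issue is exactly the hypothesis-on-singular-support caveat in the abstract. The microstalk identification of $\coeff_D$ is conditional on the $D$-Kostant slice $\Kos_D$ (the shifted conormal $T^*_{\Bun_N^{\omega(-D)}}\Bun_G + d\psi_D$) intersecting the relevant Lagrangian transversely at a single smooth point. In the paper, $\Kos_D$ is shown to avoid $\Nilp^{\irr}$, but it does meet $\Nilp^{\reg,\check{\varphi}}$ for \emph{all} $\check{\varphi}$ with $(2-2g)\check{\rho} \le \check{\varphi} \le (2-2g)\check{\rho}+\deg(D)$, so for an arbitrary component appearing in $\CC(\cF)$ the transversality hypothesis fails. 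The paper repairs this by picking a \emph{minimal} $\check{\lambda} \in \check{\Lambda}^{\rel}$ with $\SingS(\cF)\cap\Nilp^{\reg,\check{\lambda}}\ne 0$ and then working with the Lagrangian $\underline{\Nilp}_{\redu}^{\check{\lambda}}$ from which the lower strata have been removed; only then is $\lambda_D$ an isolated, smooth, transverse intersection point and is the microstalk interpretation valid. Your proposal has no analogue of this minimality argument, and without it the conclusion $\coeff_D(\cF)\ne 0$ does not follow from nonvanishing of a CC multiplicity. Relatedly, the components of $\Nilp^{\reg}_{\redu}$ are indexed by $\check{\Lambda}^{\rel}$ (essentially by $\deg(D)$), not by the divisor $D$ itself; divisors of equal degree land in the same component, so ``the component attached to $D$'' is not well-defined in the form you describe.

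Second, the bridge from cuspidality to ``$\SingS(\cF)$ meets the regular nilpotent locus'' is not the constant-term argument you sketch. You propose that cuspidality rules out components ``associated to reductions to proper parabolics,'' leaving Borel-type components. But the relevant dichotomy in the nilpotent cone is regular versus irregular, and $\Nilp^{\irr}$ is not the image of proper-parabolic conormals; the two stratifications are not aligned in the way your argument needs. The mechanism the paper actually uses is Whittaker-temperedness: Proposition \ref{temperedSS} shows that anything with singular support in $\Nilp^{\irr}$ is Whittaker-anti-tempered, so a nonzero Whittaker-tempered object must have $\SingS$ meeting $\Nilp^{\reg}$. (Cuspidality enters upstream, as in \cite{FR}, to guarantee that the tempered projection is nonzero.) This is a different and more delicate statement than vanishing of constant terms; establishing it in the twisted setting requires the finite-dimensional averaging vanishing result (Proposition \ref{G-irreg zero}) and the compatibility of convolution with irregular singular support (Proposition \ref{irregular convolution}), neither of which is supplied by the constant-term heuristic.

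Finally, a minor point on setup: the paper's microlocal identification is not carried out on a Drinfeld--Beilinson compactification $\overline{\Bun}_{N^-,D}$; it uses the open $\Bun_N^{\omega(-D)}$ and a hyperbolic-localization argument à la Nadler--Taylor on a level structure $\Bun_{G,N}^{\omega(-D)}(X,nx)$. This sidesteps exactly the boundary-strata interference you flag as a concern.
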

Here $\coeff_D$ is the Whittaker coefficient functor. We give an overview and definitions in Section \ref{coeffs}.
	
	\begin{rem}
		The statement of Theorem \ref{whitshvnilp} is parallel to the results of \cite{FR}. However, the proof given in \cite{FR} heavily relies on the Hecke action, and therefore does not work in the quantum context.
	\end{rem}

\subsubsection{} As in \cite{FR}, we in fact prove a stronger assertion than Theorem \ref{whitshvnilp}. Namely, we prove the statement for \emph{Whittaker-tempered} twisted D-modules. 

\begin{rem}
	We show (Section \ref{nice}) that Whittaker-temperedness condition is in fact vacuous for ``most"  rational levels. This is another effect in the quantum theory which does not apper at the critical level.
\end{rem}

\subsubsection{} The theory of \cite[Section 20]{AGKRRV} reduces understanding the cateogry  $D_{\kappa}(\Bun_G)$ to studying $\Shv_{\kappa, \Nilp}(\Bun_G)$, provided there exists a natural action of $\QCoh(\LocSys_H^{\cG_Z}))$ on $D_{\kappa}(\Bun_G)$. This result is not available yet, but was conjectures by Gaitsgory and Lysenko in \cite{GL}. We formulate the conjecture precisely in  Section \ref{SA}. In the rest of the subsection, we give a summary of our other results which assume this statement. 

\subsubsection{} The collection of the functors $\{\coeff_D \}$ for all $\check{\Lambda}^+$-valued divisors can be assembled in one functor  $$\D(\Bun_G) \rightarrow \D_{\kappa}(\Gra_{G, \operatorname{Conf}})^{LN, \chi},$$
where $\operatorname{Conf}$ is the configuration space of points of $X$ and $$\chi: LN \rightarrow \bA^1$$ is the non-degenerate character of $LN$. For a precise definition see Section \ref{coeffs}.

We deduce from Theorem \ref{whitshvnilp} the following result:
\begin{thm}\label{generalintro}
	The functor 
	$$\coeff^{\loc}: D_{\kappa}(\Bun_{G})^{\temp} \rightarrow \Whit_{\kappa}(\Gra_G)$$ is conservative.
\end{thm}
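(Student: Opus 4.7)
The plan is to reduce the conservativity of $\coeff^{\loc}$ on $D_{\kappa}(\Bun_G)^{\temp}$ to the non-vanishing result of Theorem~\ref{whitshvnilp}. Suppose $\cF \in D_{\kappa}(\Bun_G)^{\temp}$ satisfies $\coeff^{\loc}(\cF)=0$; the goal is to show that $\cF = 0$.

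First, granting the Gaitsgory--Lysenko conjecture on the spectral action of $\QCoh(\LocSys_H^{\cG_Z})$ on $D_{\kappa}(\Bun_G)$, I would invoke the general framework of \cite{AGKRRV} (as advertised in Section~\ref{SA}) to show that the Whittaker-tempered subcategory is controlled by objects with nilpotent singular support, yielding a (conjecturally fully faithful) comparison
\[
D_{\kappa}(\Bun_G)^{\temp} \hookrightarrow \Shv_{\kappa,\Nilp}(\Bun_G)^{\temp}.
\]
Since $\coeff^{\loc}$ is the assembly of all functors $\coeff_D$ and intertwines the spectral structure on both sides, it is enough to prove that its restriction to $\Shv_{\kappa,\Nilp}(\Bun_G)^{\temp}$ is conservative.

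Next, I would induct on the semisimple rank of $G$, assuming the statement for all proper Levi subgroups. The category $\Shv_{\kappa,\Nilp}(\Bun_G)^{\temp}$ admits a semi-orthogonal decomposition into Eisenstein-generated pieces (from proper parabolics) and the cuspidal piece. For the Eisenstein-generated part, the standard compatibility of $\coeff^{\loc}$ with Eisenstein series expresses Whittaker coefficients on $\Bun_G$ in terms of Whittaker coefficients on $\Bun_M$ for the Levis $M$; thus $\coeff^{\loc}(\cF)=0$ forces the Levi-side coefficients of the constant terms of $\cF$ to vanish, and the inductive hypothesis forces these constant terms themselves to vanish, so $\cF$ must be cuspidal. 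The torus base case is immediate since $\coeff^{\loc}$ is then essentially tautological. Once $\cF$ is cuspidal, tempered, and has nilpotent singular support, the Whittaker-tempered strengthening of Theorem~\ref{whitshvnilp} advertised in the text supplies a $\check{\Lambda}^+$-valued divisor $D$ with $\coeff_D(\cF) \neq 0$, contradicting $\coeff^{\loc}(\cF) = 0$; hence $\cF = 0$.

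The principal obstacle I anticipate is the very first step: establishing the embedding $D_{\kappa}(\Bun_G)^{\temp} \hookrightarrow \Shv_{\kappa,\Nilp}(\Bun_G)^{\temp}$. This rests on two not-yet-available inputs: the conjectural spectral action of $\QCoh(\LocSys_H^{\cG_Z})$, and a quantum analogue of the microlocal estimates of \cite{AGKRRV}, in which one must track how the metaplectic twist and the deformed renormalization interact with the Hitchin nilpotent cone in $T^{*}\Bun_G$. Additional care is required at irrational levels where, as noted in the text, the Whittaker-tempered condition becomes vacuous and one would need to embed the entirety of $D_{\kappa}(\Bun_G)$ into $\Shv_{\kappa,\Nilp}(\Bun_G)$; by contrast the Eisenstein/cuspidal reduction and the final appeal to Theorem~\ref{whitshvnilp} are expected to be essentially formal given the tools already assembled.
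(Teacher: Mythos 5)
Your reduction strategy diverges from the paper's at both key steps, and the first one contains a genuine gap.

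The paper does not pass through an embedding $D_{\kappa}(\Bun_G)^{\temp} \hookrightarrow \Shv_{\kappa,\Nilp}(\Bun_G)^{\temp}$, and indeed no such embedding should be expected: $D_{\kappa}(\Bun_G)^{\temp}$ is typically far larger than any subcategory of the nilpotent-singular-support sheaf category. What the paper does instead (Lemma \ref{20.9.1}, proved via the metaplectic Beilinson spectral projector of Section \ref{spectral proj section}) is a \emph{fiberwise} comparison over the spectral stack: for every field extension $k'/k$ and every $\sigma' \in \LocSys_H^{\cG_Z}(k')$, the fiber
$$D_{\kappa}(\Bun_G)^{\temp} \otimes_{\QCoh(\LocSys_H^{\cG_Z})} \Vect_{\sigma'}$$
agrees with the corresponding fiber of $\Shv_{\kappa,\Nilp}$. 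One then invokes the AGKRRV conservativity criterion (Lemma 21.4.6 of \cite{AGKRRV}) saying that a $\QCoh(\LocSys_H^{\cG_Z})$-module object vanishes if and only if its fibers at all such $\sigma'$ vanish. This is a genuinely weaker assertion than a global embedding, and it is only available because of the spectral action conjecture together with the projector machinery; your sketch elides this and replaces it with a non-existent global comparison.

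The second divergence is not a gap but an inefficiency. You set up an induction on semisimple rank via an Eisenstein/cuspidal decomposition so that you can feed cuspidal objects into Theorem \ref{whitshvnilp}. The paper avoids this entirely: Theorem \ref{conservativity nilp sheaves} is proved for \emph{all} Whittaker-tempered objects in $\Shv_{\kappa,\Nilp}(\Bun_G)$, with no appeal to cuspidality, Eisenstein compatibility, or rank induction. The mechanism is Proposition \ref{temperedSS} (tempered implies the singular support meets $\Nilp^{\reg}$), followed by the choice of a minimal $\check{\lambda}$ and the microstalk computation of Theorem \ref{whittaker functional is microstalk}, which establishes a nonzero Euler characteristic of $\coeff_D$ directly from the characteristic cycle. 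Cuspidality never enters, so the Eisenstein reduction buys you nothing. If you do want to take the Eisenstein route, you would additionally need a twisted semi-orthogonal decomposition of $D_{\kappa}(\Bun_G)^{\temp}$ (not merely of the nilpotent category) and a verified compatibility of the quantum $\coeff^{\loc}$ with parabolic induction — neither of which the paper develops, and the latter of which is not obviously formal at non-critical $\kappa$.

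Finally, your remark about irrational levels is a red herring: the theorems here are stated for rational $\kappa$; at irrational $\kappa$ the group $H$ trivializes and the spectral reduction becomes degenerate rather than demanding an embedding into the nilpotent category.
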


\begin{rem}
	The stronger version of the statement of Theorems \ref{whitshvnilp} and \ref{generalintro}, without assuming that $G$ is adjoint, will be established as a corollary in a forthcoming work \cite{B}.
\end{rem}

	\subsection{Outline of the argument.}
In this subsection, we provide a summary of the proof of Theorem \ref{whitshvnilp}. There are additional sheaf-theoretic tools we can use to study $\Shv_{\Nilp}(\Bun_G)$: microlocal geometry (\cite{KS}).

\subsubsection{} We first discuss some geometry of the stack $\Nilp$ (see section \ref{higgs} for more details). Let $$\Nilp^{\reg} \subset \Nilp$$ denote the open substack of {\it regular} nilpotent Higgs fields. This is a global version of the space of regular nilpotent elements in $\mathfrak{g}$. The stack $\Nilp^{\reg}$ is the union of equidimensional smooth irreducible components numbered by a sublattice  $\check{\Lambda}^{\operatorname{rel}} \subset \check{\Lambda}$ .

Following \cite{FR} we prove that if $\cF$ is Whittaker-tempered then $\SingS(\cF) \cap \Nilp^{\reg} \neq \emptyset$. 

\subsubsection{} We now discuss microlocal geometry behind the functors $\coeff_D$. We emphasize that this approach is entirely different from the one in \cite{FR}. We define this functor as 
$$\cF \mapsto C_{\dR}(\Bun_N^{\omega(-D)} , p^!(\cF) \otimes \psi_D^!(\operatorname{exp}))[-\dim \Bun_N^{\omega(-D)}],$$
where $p: \Bun_N^{\omega(-D)} \rightarrow \Bun_G$ is the form of $\Bun_N$ corresponding to $D$, and $\psi_D$ is the Whittaker function on $\Bun_N^{\omega(-D)}$ (see Section \ref{coeffs} for more details). 
	
The Whittaker function defines a Lagrangian $$\Bun_N^{\omega(-D)} \rightarrow T^*\Bun_N^{\omega(-D)}.$$ Compose this Lagrangian with the natural Lagrangian correspondence between $T^*\Bun_N^{\omega(-D)}$ and $T^*\Bun_G$. We denote the resulting Lagrangian in $T^*\Bun_G$ by $\Kos_D$ and call it the {\it $D$-Kostant slice} (since for $D=0$ this stack is the global Kostant slice). 

The following result is proved (up to some modifications discussed in Section \ref{NT}) in \cite{NT}. The stack $\Kos_D$ controls the functor $\coeff_D$ in the following sense: for a closed conic Lagrangian $\Lambda \subset T^* \Bun_G$
\begin{thm}\label{thm intro}
	If the shifted conormal bundle $\Kos_D$ intersects $\Lambda$ transversely at a smooth point $\{\lambda_D\}$, then $\coeff_D|_{\ \Shv_{\kappa, \Lambda}(\Bun_G)}$ calculates the twisted microstalk at $\{\lambda_D\}$.
\end{thm}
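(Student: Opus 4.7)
The plan is to deduce the theorem from the microlocal Morse-theoretic interpretation of the Whittaker coefficient functor established in \cite{NT}, accounting for two differences with the setting of that reference: the twist by a Kac-Moody level $\kappa$, and the fact that the divisor $D$ may be nonzero (so one works with $\Bun_N^{\omega(-D)}$ rather than $\Bun_N^{\omega}$).

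First, I would analyze the microlocal content of the defining formula
$$\coeff_D(\cF) \;=\; C_{\dR}\bigl(\Bun_N^{\omega(-D)},\, p^!\cF \otimes \psi_D^!(\exp)\bigr)\bigl[-\dim \Bun_N^{\omega(-D)}\bigr].$$
The singular support of $\psi_D^!(\exp)$ is the graph of $d\psi_D$ inside $T^*\Bun_N^{\omega(-D)}$, while the singular support of $p^!\cF$ is controlled by the Lagrangian correspondence $T^*\Bun_N^{\omega(-D)} \leftarrow \cdot \to T^*\Bun_G$ applied to $\SingS(\cF)$. By the standard singular-support estimate for tensor products, the integrand has singular support contained in the fiber product of these two Lagrangians; following the correspondence back to $T^*\Bun_G$, this is nothing but the intersection $\Kos_D \cap \SingS(\cF) \subset \Lambda$, reflecting the very definition of $\Kos_D$.

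Second, the de Rham cohomology of such an integrand is computed, via the stationary-phase principle in the form of the Kashiwara-Schapira microlocal Morse lemma (adapted to smooth algebraic stacks and holonomic D-modules as in \cite{NT}), by summing contributions from the isolated transverse points of that intersection. The transversality hypothesis at the smooth point $\lambda_D$ guarantees a unique contribution, and modulo the twisted refinement discussed below it is precisely the microstalk of $\cF$ at $\lambda_D$, up to a cohomological shift absorbed in the normalization $[-\dim \Bun_N^{\omega(-D)}]$.

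The main obstacle lies in the twisted variant of this statement. The level $\kappa$ corresponds to a gerbe on $\Bun_G$, and the microstalk of a $\kappa$-twisted D-module at a point of $T^*\Bun_G$ is only canonically defined after trivializing this gerbe along a local transversal to $\Lambda$ at $\lambda_D$. The modifications promised in Section \ref{NT} consist precisely in identifying two natural trivializations: one provided by the Kostant-type slice $\Kos_D$ itself, and one coming from the Whittaker pairing via the $LN$-action and the character $\chi$. Verifying that they agree is the technical heart of the argument; once the matching is in place, the twisted Morse lemma yields the stated identification of $\coeff_D|_{\Shv_{\kappa,\Lambda}(\Bun_G)}$ with the twisted microstalk functor at $\lambda_D$.
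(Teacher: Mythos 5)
Your proposal correctly identifies the overall strategy: $\Kos_D$ is the shifted conormal of $\Bun_N^{\omega(-D)} \to \Bun_G$ along $d\psi_D$ (this is exactly Proposition \ref{transversal intersection 1}), and the Whittaker coefficient should be recognized as a microstalk via a Nadler--Taylor--type microlocal Morse lemma. However, your account of the twist --- which you rightly flag as the main difficulty --- is not what the paper does and does not obviously work. You propose to match a trivialization of the Kac--Moody gerbe ``coming from $\Kos_D$'' against one ``coming from the Whittaker pairing via $LN$ and $\chi$,'' but $\Kos_D$ does not furnish a trivialization of the gerbe on a neighborhood of $\lambda_D$ in $\Bun_G$, and the twisted microstalk functor is only defined once some such trivialization has been produced. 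The paper's Theorem \ref{extension of NT} solves this by a different mechanism: it chooses a $\bG_m$-equivariant line bundle $\cL$ on $Y$ (in the application, the pullback of the determinant line bundle on $\Bun_G$) such that the gerbe pulls back trivially to the $\bG_m$-torsor $\widetilde{Y}=\mathrm{Tot}(\cL)\setminus 0$, and such that the diagram comparing fixed loci on $\widetilde{Y}$ and $Y$ is Cartesian (Lemma \ref{W}); the untwisted Nadler--Taylor theorem is applied upstairs and then pushed back down. This is the actual technical content behind the phrase ``up to some modifications discussed in Section \ref{NT}.''

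A second, quieter gap: your argument invokes the ``stationary-phase/Morse'' step directly on $\Bun_G$, but the map $\Bun_N^{\omega(-D)}\to\Bun_G$ is not a locally closed embedding and $\psi_D$ is a function only on the source, so the vanishing-cycles formalism cannot be run there as stated. The paper introduces level structure at an auxiliary point $x$ (the stacks $\Bun_G^{\omega(-D)}(X,nx)$, $\Bun_{G,N}^{\omega(-D)}(X,nx)$, $\Bun_N^{\omega(-D)}(X,nx)$ and the diagram (\ref{players})) together with the hyperbolic $\bG_m$-action of Construction \ref{action}, precisely so that $\alpha'_{U_G}$ becomes a locally closed embedding realized as an attracting locus, and the required transversality/cleanness is verified there (Propositions \ref{transversal intersection 1}, \ref{transversal intersection 2} and Corollary \ref{clean intersection}) before descending to $\Bun_G$. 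Your proposal needs this scaffolding, or an equivalent, for the microlocal Morse lemma to apply.
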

In particular, this means that $\coeff_D|_{\ \Shv_{\kappa, \Lambda}(\Bun_G)}$ is t-exact and commutes with Verdier duality. For a constructible $\cF$ denote by $$\operatorname{CC}(\cF) = \sum_{\beta \in \Irr(\Lambda)}^{} c_{\beta, \cF} \cdot [\beta]$$
its characteristic cycle. Denote by $\beta_D$ the irreducible component such that $\lambda_D \in \beta_D$.
\begin{cor}\label{cor intro}
	For $\cF \in \Shv_{\kappa, \Lambda}(\Bun_G)^{\operatorname{cnstr}}$, 
	we have $$\chi(\coeff_D(\cF)) = c_{\beta_D, \cF}.$$
\end{cor}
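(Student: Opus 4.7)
The plan is to deduce the corollary directly from Theorem \ref{thm intro} by passing to Euler characteristics and invoking the standard microlocal computation of characteristic-cycle multiplicities. First, I would apply Theorem \ref{thm intro}: under the transversality assumption the functor $\coeff_D$, restricted to $\Shv_{\kappa, \Lambda}(\Bun_G)$, is identified (up to a shift that is absorbed in the normalization $[-\dim \Bun_N^{\omega(-D)}]$ built into $\coeff_D$) with the twisted microstalk of $\cF$ at the smooth point $\lambda_D \in \beta_D \subset \Lambda$. This reduces the claim to computing the Euler characteristic of that microstalk.

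The second step is a classical fact from microlocal sheaf theory (Kashiwara-Schapira): for a constructible complex with $\SingS(\cF) \subseteq \Lambda$, and any smooth point $\lambda$ of an irreducible Lagrangian component $\beta \in \Irr(\Lambda)$, the Euler characteristic of the microstalk at $\lambda$ equals the multiplicity of $\beta$ in $\CC(\cF)$. This can be seen either via the local Morse-theoretic interpretation of microstalks using a test function whose graph is transverse to $\Lambda$ precisely along $\beta$ at $\lambda$, or by locally decomposing $\cF$ near $\lambda$ as a sum of simple perverse sheaves whose characteristic cycles are irreducible conormal varieties; in both descriptions only the component $\beta$ contributes at $\lambda$, and it contributes with multiplicity $c_{\beta, \cF}$. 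Applied with $\lambda = \lambda_D$ and $\beta = \beta_D$, this yields the desired equality $\chi(\coeff_D(\cF)) = c_{\beta_D, \cF}$.

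The only nontrivial point is handling the twist by the level $\kappa$ and the associated metaplectic gerbe. However, both microstalks and characteristic cycles are defined microlocally, and the gerbe is étale-locally trivial; hence one may pass to an étale neighborhood of $\lambda_D$ on which the twist is trivialized, apply the untwisted Kashiwara-Schapira statement there, and descend. The Euler characteristic is insensitive to the choice of trivialization, so no correction factor appears. I expect the main obstacle to be not any conceptual issue but the careful matching of shift and sign conventions between the normalization of $\coeff_D$, the definition of the twisted microstalk used in Theorem \ref{thm intro}, and the sign convention for $\CC$; in particular one must ensure that the shifts chosen in defining $\coeff_D$ cancel those in the microstalk so that $\chi(\coeff_D(\cF))$ equals $+c_{\beta_D, \cF}$ rather than $(-1)^d c_{\beta_D, \cF}$ for some $d$.
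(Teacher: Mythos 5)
Your proposal is correct and follows essentially the same route as the paper: apply Theorem \ref{thm intro} (whose precise form is Theorem \ref{whittaker functional is microstalk}) to identify $\coeff_D$ with a twisted microstalk at $\lambda_D$, then invoke the Kashiwara--Schapira relation between microstalk Euler characteristics and characteristic-cycle multiplicities (the paper's Proposition \ref{microstalk_cc}, proved via \cite[(9.4.9)]{KS} and \cite[Proposition 6.6.1(ii)]{KS}), and handle the gerbe exactly as you suggest, by trivializing it on an open neighborhood of $\lambda_D$ and reducing to the untwisted case. The shift bookkeeping you flag is what the normalization $[\dim_{\check{\rho}(\omega)(-D)}\Bun_{B^-}(X)]$ in Theorem \ref{whittaker functional is microstalk} is designed to absorb.
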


\subsubsection{} It is well-known that for $D=0$ the stack $\Kos_0$ intersects $\Nilp$ transversely at a single smooth point. Although this is no longer true for other $D$, we find that $\Kos_D$ intersects $\underline{\Nilp}_{\redu}^{\reg, \deg(D)}$ (i.e. the union of irreducible component of $\Nilp_{\redu}^{\reg}$ with of number not less than $\deg(D)$) transversely at a smooth point. We do so by embedding the intersection $\Kos_D \cap \Nilp_{\redu}^{\reg}$ into the Zastava space and matching connected components of the latter with intersections of $\Kos_D$ with different irreducible component of $\Nilp_{\redu}^{\reg}$.

Moreover, it is easy to see that $\Kos_D \cap\Nilp_{\redu}^{\irreg} = \emptyset$. Therefore the conditions of Theorem \ref{thm intro} are satisfied for $$\Lambda = \Nilp_{\redu} \setminus (\Nilp_{\redu}^{\reg})^{< \deg(D)}.$$

\subsubsection{} Finally, given $\cF \in \Shv_{\kappa, \Nilp}(\Bun_G)^{\temp}$ choose a minimal $\check{\lambda} \in \check{\Lambda}^{\rel}$ such that $$\SingS(\cF) \cap \Nilp^{\reg, \check{\lambda}} \neq 0$$ and a $\check{\Lambda}$-valued divisor $D$ on $X$ with $$\check{\lambda} = (2-2g)\check{\rho} + \deg(D).$$ We claim that $\coeff_D(\cF) \neq 0$.   

Indeed, there exists $i$ and a constructible $\cG \in H^i(\cF)$ with $\Nilp^{\reg, \check{\lambda}}  \in \SingS(\cG)$. Then we by Corollary \ref{cor intro} have $\coeff_D(\cG) \neq 0$. By since the functor is t-exact we have 
$$H^0(\coeff_D(\cG)) \hookrightarrow H^0(\coeff_D(H^i(\cF))) \cong H^i(\coeff_D(\cF)),$$
which implies the claim.

	\subsection{Notations and conventions.}
	
		\subsubsection{Categories.} We use the formalism of $\infty$-categories developed in \cite{LHTT}, \cite{HA}, and theory of DG categories as understood in \cite{GR}. By a DG category we mean $k$-linear stable $\infty$-category. We let $\DGCat$ be the category
	of presentable DG categories with continuous functors with monoidal structure given by the Lurie tensor product $\otimes$. 
	
	\subsubsection{Lie theory.}
	
	Throughout the paper $G$ will stand for a reductive group over $k$ of adjoint type. We choose Borel subgroup $B \subset G$, the opposite Borel $B^- \subset G$ and the Cartan subgroup $T= B \cap B^-$. Let $N$ ($N^-$) be the unipotent radical of $B$ ($B^-$). Let $\Lambda$ ($\check{\Lambda}$) denote the lattice of weights (coweights) of $G$. Let $\Lambda^+$ ($\check{\Lambda}^+$) denote the subset of dominant weights (coweights). Let $\cI$ denote the set of nodes for the Dynkin diagram of $G$. For $i \in \cI$, we let $\alpha_i$ ($\check{\alpha}_i$) denote the
corresponding simple root (coroot). Let $2\rho$ denote the sum of simple roots and $2\check{\rho}$ the sum of simple coroots. Let $\check{G}$ be the Langlands dual group over $k$ for $G$. We denote by $LG$ ($L^+G$) the loop (arc) group of $G$.

Let $\mathfrak{g}_{\irr} \subset \mathfrak{g}$ denote the reduced closed subscheme of irregular elements.
We let $\cN \subset \mathfrak{g}$ denote the nilpotent cone. We let $\cN^{\irr} := \cN \cap \mathfrak{g}_{\irr}$ denote the subscheme of irregular
nilpotent elements. We let $\cN^{\reg} \subset \cN$ denote the open complement to $\cN^{\irr}$, which parametrizes
of regular nilpotent elements.
	
\subsubsection{Levels.} A level $\kappa$ for $G$ is a $G$-invariant symmetric bilinear form 
$$\kappa: \Sym^2(\mathfrak{g}) \rightarrow \bC.$$
Denote by $\kappa_{\mathfrak{g}, \operatorname{crit}}$ the critical level, i.e. $-\frac{1}{2}$ times the Killing form. 
Throughout the paper we will assume that the level $\kappa$ is non-degenerate, i.e. that $\kappa - \kappa_{\mathfrak{g}, \operatorname{crit}}$ is nondegenerate as a bilinear form. The {\it dual} level $\check{\kappa}$ on $\check{G}$ for $\kappa$ is the unique nondegenerate level such that the restriction of $\check{\kappa} - \kappa_{\check{\mathfrak{g}}, \operatorname{crit}}$ to $\mathfrak{t}^*$ and the restriction of $\kappa - \kappa_{\mathfrak{g}, \operatorname{crit}}$ to $\mathfrak{t}$ are dual symmetric bilinear forms.

Suppose $G$ is simple. A level $\kappa$ is {\it rational} if $\kappa$ is a rational multiple of the Killing form and {\it irrational}otherwise. A level $\kappa$ is {\it positive} if $\kappa - \kappa_{\mathfrak{g}, \operatorname{crit}}$ is a positive rational multiple of the Killing form. A level is {\it negative} if it is not positive or critical. For general reductive $G$, we say a level $\kappa$ is rational, irrational, positive, or negative if its restrictions to each simple factor are so.
	
	\subsubsection{(Twisted) D-modules.} 
	For a prestack $\cY$ over $k$ equipped with a twisting  (in the sense of \cite{GRCrys}) $\cT$ denote by $\D_{\cT}(\cY)$ the category of twisted crystals defined and studies in \cite{GRCrys}. For a map $f: \cY \rightarrow \cZ$ of stacks with twistings $\cT_{\cY}$ and $\cT_{\cZ}$ such that $f^!\cT_{\cZ} \cong \cT_{\cY}$, we let $$f^!: \D_{\cT_{\cZ}}(\cZ) \rightarrow \D_{\cT_{\cY}}(\cY)$$ denote the
corresponding pullback functor. If $f$ is ind-representable, we let $$f_*: \D_{\cT_{\cY}}(\cY) \rightarrow \D_{\cT_{\cZ}}(\cZ)$$ the
corresponding pushforward functor. When defined, we denote by $f_!$ and $f^*$ the left adjoint functors.

	Recall that a level $\kappa$ gives rise to a (factorizable) de Rham twisting $\cG_{\kappa}$ on the affine Grassmannian $\Gra_G$, which descends to $\Bun_{G}$ (e.g. \cite[10.1]{GLCII}). We set the notation 
	$$\D_{\kappa}(\Bun_G) := \D_{\cG_{\kappa}}(\Bun_G).$$
	
	\subsubsection{(Twisted) sheaves.} 
	For an affine scheme $S$ of finite type equipped with a twisting $\cT$, we let $$\Shv_{\cT}(S)^c \subseteq \D_{\cT}(S)$$
be the subcategory of compact objects that are holonomic with regular singularities. Let $$\Shv_{\cT}(S) = \Ind(\Shv_{\cT}(S)^c) \subset \D_{\cT}(S).$$
For $Y$ an algebraic stack equipped with a twisting $\cT$, we let 
$$\Shv_{\cT}(Y):= \lim_{s: S \rightarrow Y}\Shv_{\cT|_S}(S),$$
and $$\Shv_{\cT}(Y)^{\operatorname{cnstr}}:= \lim_{s: S \rightarrow Y}\Shv_{\cT|_S}(S)^c.$$
One can show that $\Shv_{\cT}(Y)$ has a natural t-structure, and $\Shv_{\cT}(Y)^{\operatorname{cnstr}}$ is closed under truncations. We refer to objects in $\Shv_{\cT}(Y)$ as twisted sheaves, and to objects in $\Shv_{\cT}(Y)^{\operatorname{cnstr}}$ as twisted constructible sheaves. On $\Shv_{\cT}(Y)^{\operatorname{cnstr}}$ we have a well-defined Verdier duality equivalence, denoted by $$\bD: \Shv_{\cT}(Y)^{\operatorname{cnstr}, \op} \cong \Shv_{\cT}(Y)^{\operatorname{cnstr}}.$$
For $\Lambda \subset T^*Y$ a closed conical substack we let $$\D_{\cT, \Lambda}(Y) \subseteq \D_{\cT}(Y),$$
$$\Shv_{\cT, \Lambda}(Y) \subseteq \Shv_{\cT}(Y)$$
denote the subcategory of twisted D-modules/sheaves with singular support in $\Lambda$.

	\subsection{Acknowledgments.} This work owes its existence to Sam Raskin. I am grateful to him for proposing this project, and for being generous with his time and mathematics. I thank Dennis Gaitsgory, Kevin Lin, Lin Chen, Gurbir Dhillon, Joakim Faergeman, Wyatt Reeves, and Sasha Petrov for helpful discussions. I am also grateful to Dennis Gaitsgory, Kevin Lin, and Sam Raskin for their comments on an earlier version of this text.

	\section{Generalities.}
	
		\subsection{Higgs bundles.}\label{higgs}
	Recall that a Higgs bundle on $X$ for the group $G$ is a pair $(\cP_G, \phi)$ where $\cP_G$ is a
	$G$-bundle and $\phi \in \Gamma(X, \mathfrak{g}_{\cP_G} \otimes \omega) $.
	Recall that Higgs bundles form an algebraic stack, which can be written as a mapping
	stack
	$$\Maps(X, \mathfrak{g}/(G \times\bG_m)) \times_{\Maps(X,\bG_m)} \{\omega\}.$$
	Choosing an identification $\mathfrak{g} \cong \mathfrak{g}^*$ we get an isomorphism between the stack of Higgs bundles and $T^*\Bun_G$.
	
	\begin{ntn}
		Let $\cY$ be a stack and $\stackrel{\circ}{\cY} \subset \cY$ be an open substack. Let $$\Maps(X, \stackrel{\circ}{\cY} \subset \cY)$$ denote the stack with $S$-points given by maps 
		$$y: X \times S \rightarrow \cY$$
		such that there exists an open $U \subseteq X \times S$ such that:
		\begin{itemize}
			\item $U \subseteq X\times S$ is schematically dense;
			\item $U \rightarrow S$ is a flat cover;
			\item $y|_{U}$ factors through $\stackrel{\circ}{\cY}$.
		\end{itemize}
	\end{ntn}
	
	Recall the definition of the \emph{global nilpotent cone}: $$\Nilp:= \Maps(X, \cN / (G \times \bG_m))\times_{\Maps(X,\bG_m)} \{\omega\} \subset T^* \Bun_G.$$ Recall the stack of \emph{irregular nilpotent Higgs bundles}
	$$\Nilp^{\irr} := \Nilp:= \Maps(X, \cN^{\irr} / (G \times \bG_m))\times_{\Maps(X,\bG_m)} \{\omega\} \subset T^* \Bun_G$$
	and its open complement $$\Nilp^{\reg} := \Maps(X, \cN^{\reg} / (G \times \bG_m) \subseteq \cN/ (G \times \bG_m))\times_{\Maps(X,\bG_m)} \{\omega\},$$
	the stack of \emph{generically regular Higgs bundles}.
	
	\subsubsection{Generically regular nilpotent Higgs bundles.} Next we record some geometric properties of $\Nilp^{\reg}$ established in \cite{BD} and \cite[2.5]{FR}, which will be crucial for our arguments. Let $\stackrel{\circ}{\mathfrak{n}} \subset \mathfrak{n}$ denote the open subscheme of elements of $\mathfrak{n}$ that are regular as elements of $\mathfrak{g}$. Set 
	$$\widetilde{\Nilp}^{\reg} := \Maps(X, \stackrel{\circ}{\mathfrak{n}}/(B \times \bG_m) \subseteq \mathfrak{n}/(B \times \bG_m)) \times_{\Maps(X,\bG_m)} \{\omega\}.$$
	We have canonical maps 
	$$\Nilp^{\reg} \leftarrow \widetilde{\Nilp}^{\reg} \rightarrow \Bun_B \rightarrow \Bun_T.$$
	\begin{ntn}
		For a coweight $\check{\varphi}$ let $$\widetilde{\Nilp}^{\reg, \check{\varphi}}:= \widetilde{\Nilp}^{\reg} \times_{\Bun_T} \Bun_T^{\check{\varphi}}.$$ 
	\end{ntn}
	Note that $$\widetilde{\Nilp}^{\reg, \check{\varphi}} \rightarrow \Nilp^{\reg}$$ is a locally closed embedding, and the corresponding locally closed subschemes define a stratification of $\Nilp^{\reg}$. In more detail, we have 
	\begin{pr}\cite[Prposition 2.5.4.1]{FR}\label{2.5.4.1} For a coweight $\check{\varphi}$
		\begin{enumerate}
			\item $\widetilde{\Nilp}^{\reg, \check{\varphi}}$ is smooth of dimension $\dim \Bun_G$. 
			\item Suppose $(\alpha_i,\check{\varphi}) + 2g - 2 < 0$ for some $i \in \cI$. Then $\widetilde{\Nilp}^{\reg, \check{\varphi}}$ is empty.
			\item Suppose $(\alpha_i,\check{\varphi})+ 2g - 2 \geq 0$ for all $i \in \cI$. Then $\widetilde{\Nilp}^{\reg, \check{\varphi}}$ is non-empty and connected. \footnote{Importantly, this is only true when the center of $G$ is trivial. This is the only place where we use that $G$ is adjoint.}
		\end{enumerate}
	\end{pr}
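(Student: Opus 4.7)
My plan is to view $\widetilde{\Nilp}^{\reg, \check{\varphi}}$ as a (derived) mapping stack with target the smooth algebraic stack $\stackrel{\circ}{\mathfrak{n}}/(B\times\bG_m)$ and to treat the three claims in turn. For part (1), I would compute the tangent complex at a point $(\cP_B,\phi)$ as the hypercohomology over $X$ of the two-term complex of locally free sheaves
$$C^\bullet := [\mathfrak{b}_{\cP_B} \xrightarrow{\ad \phi} \mathfrak{n}_{\cP_B}\otimes\omega]$$
placed in degrees $[0,1]$; smoothness is equivalent to $\mathbb{H}^2(X,C^\bullet) = 0$. The key linear-algebra input is that $\ad\phi\colon \mathfrak{b}\to\mathfrak{n}$ is surjective for every regular nilpotent $\phi$, so generic regularity of the section $\phi$ forces $\coker(\ad\phi)$ to be a torsion sheaf, supported on the finite non-regular locus. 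Torsion sheaves on a curve have vanishing $H^1$ and $H^2$, so the long exact sequence in hypercohomology gives $\mathbb{H}^2 = 0$. The dimension equals $-\chi(C^\bullet) = \chi(\mathfrak{n}_{\cP_B}\otimes\omega) - \chi(\mathfrak{b}_{\cP_B})$, which a Riemann--Roch calculation (using $\deg \mathfrak{n}_{\cP_B} = 2\langle\rho,\check{\varphi}\rangle$ and $\dim B + \dim N = \dim G$) identifies with $(g-1)\dim G = \dim\Bun_G$.

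For part (2), I would use the $B$-equivariant quotient $\mathfrak{n}\twoheadrightarrow\mathfrak{n}/[\mathfrak{n},\mathfrak{n}] = \bigoplus_{i\in\cI}\mathfrak{g}_{\alpha_i}$. Applied to sections, it sends a generically regular $\phi$ to a tuple $(\phi_i)$ with each $\phi_i \in H^0(X, \cL^{\alpha_i}_{\check{\varphi}}\otimes\omega)$ nonzero, where $\cL^{\alpha_i}_{\check{\varphi}}$ is the line bundle induced from $\cP_T$ by the character $\alpha_i$. Since $\deg(\cL^{\alpha_i}_{\check{\varphi}}\otimes\omega) = \langle\alpha_i,\check{\varphi}\rangle + 2g - 2$, negativity of this degree for any $i$ obstructs the existence of such $\phi_i$ and forces emptiness.

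For part (3), non-emptiness is the reverse construction: under the adjoint hypothesis the $\alpha_i$ form a $\bZ$-basis of $X^*(T)$, so one can independently choose a $\cP_T$ with $\cL^{\alpha_i}_{\check{\varphi}}\otimes\omega \cong \cO(D_i)$ for effective divisors $D_i$ of degree $d_i := \langle\alpha_i,\check{\varphi}\rangle + 2g - 2 \geq 0$; taking $\cP_B = \cP_T\times^T B$ the split extension and $\phi = \sum_i \phi_i\, e_{\alpha_i}$ (with $\phi_i$ the tautological section of $\cO(D_i)$) produces a point. For connectedness, my plan is to introduce the morphism
$$\pi\colon \widetilde{\Nilp}^{\reg,\check{\varphi}} \to \prod_{i\in\cI} X^{(d_i)},\qquad (\cP_B,\phi)\mapsto (\operatorname{div}\phi_i)_{i\in\cI},$$
whose target is smooth and irreducible (a product of symmetric powers of the smooth irreducible curve $X$). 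The adjoint hypothesis enters precisely here: it guarantees that the divisorial data $(D_i)$ uniquely recovers $\cP_T$, so a fiber of $\pi$ decomposes into (a) the stack of $B$-bundles extending the determined $\cP_T$---an iterated $\bG_a$-gerbe built from the height filtration of $N$, hence connected---followed by (b) an affine space of sections of $\mathfrak{n}_{\cP_B}\otimes\omega$ lifting the prescribed $(\phi_i)$, whenever a lift exists. The canonical split construction above furnishes a global set-theoretic section of $\pi$, in particular showing surjectivity.

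The main obstacle I anticipate is controlling the locus where the relative dimension of $\pi$ jumps, i.e.\ where the obstruction in $H^1(X,\mathfrak{n}_{\geq 2,\cP_B}\otimes\omega)$ to the lifting problem in (b) is nonzero and the fiber picks up extra components. I would address this by leveraging smoothness from part (1), which forces $\widetilde{\Nilp}^{\reg,\check{\varphi}}$ to be equidimensional; combined with a direct dimension count matching $\dim\Bun_G - \sum_i d_i$ against the generic fiber dimension of $\pi$, this rules out a jump that would create a second irreducible component dominating a smaller-dimensional locus of $\prod_i X^{(d_i)}$. The conclusion will then be that the image of the split section generates a dense irreducible open of $\widetilde{\Nilp}^{\reg,\check{\varphi}}$, so that this smooth equidimensional stack is in fact irreducible, and therefore connected.
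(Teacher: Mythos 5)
The paper does not reprove this statement; it cites \cite{FR} directly (and ultimately \cite{BD}), so there is no in-paper argument to compare against. Evaluating your proposal on its own terms: parts (1) and (2) and the non-emptiness half of (3) are correct and are exactly the standard arguments. For (1), the identification of the tangent complex with $R\Gamma(X, [\mathfrak{b}_{\cP_B}\to\mathfrak{n}_{\cP_B}\otimes\omega])$, the surjectivity of $\ad\phi:\mathfrak{b}\to\mathfrak{n}$ at regular nilpotents (because the centralizer of a regular nilpotent is contained in $\mathfrak{n}$, so it has the full rank dimension), and the resulting torsion cokernel killing the obstruction space are all right, and the Riemann--Roch count giving $(g-1)\dim\mathfrak{g}$ is correct. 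For (2), the observation that generic regularity forces every simple-root projection $\phi_i$ to be a nonzero section of a line bundle of degree $\langle\alpha_i,\check\varphi\rangle+2g-2$ is the right obstruction. Non-emptiness in (3) via the split $B$-bundle and tautological sections also works, using that the $\alpha_i$ freely generate $X^*(T)$ when $G$ is adjoint.

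The gap is in the connectedness argument, and you have essentially located it yourself but not closed it. The issue is not merely a matter of bookkeeping. Smoothness and equidimensionality of the total space, irreducibility of the base $\prod_i X^{(d_i)}$, a section of $\pi$, and connectedness of the \emph{generic} fiber of $\pi$ together do not rule out an irreducible component $Z$ of $\widetilde{\Nilp}^{\reg,\check\varphi}$ with $\pi(Z)$ a proper closed sublocus of the base. Your proposed remedy (``a direct dimension count matching $\dim\Bun_G-\sum_i d_i$ against the generic fiber dimension rules out a jump'') would only succeed if you could bound the dimension of \emph{every} fiber of $\pi$, not just the generic one, by $\dim\Bun_G-\sum_i d_i$. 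But the dimension of the fiber over $\cP_B$ involves $h^0(X,[\mathfrak{n},\mathfrak{n}]_{\cP_B}\otimes\omega)$, which can jump; and the obstruction to lifting lies in $h^1$ of the same sheaf, which jumps correspondingly by Riemann--Roch, so the locus where lifting is possible can be a positive-codimension closed condition on the $\cP_B$'s over which the affine space of lifts becomes larger. Equidimensionality of the source does not by itself forbid a component concentrated over such a jump locus. To close the gap you would need one of: (i) prove $\pi$ is flat (or smooth), so fiber dimension is constant; (ii) prove every component dominates the base, e.g.\ via a factorization/Hecke-modification argument moving the divisor $D_i$ to general position while staying in the same component; or (iii) reduce to the irreducibility of a Zastava space as in \cite{BFGM}, which is what the present paper does in a closely related situation (Section 4). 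As stated, your final paragraph is a plausible program rather than a proof, and the assertion that ``the image of the split section generates a dense irreducible open'' is not justified by what precedes it.
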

	Denote by $\check{\Lambda}^{\operatorname{rel}} \subset \check{\Lambda}$ the subset of coweights $\check{\phi}$ such that $(\alpha_i,\check{\varphi})+ 2g - 2 \geq 0$ for all $i \in \cI$.

	\subsection{Coefficient functors.}\label{coeffs}
	Fix $\omega^{\frac{1}{2}}$ a square root of the canonical sheaf on $X$. Set $\check{\rho}(\omega):= (2\check{\rho})(\omega^{\frac{1}{2}})$. Let $D$ be a $\check{\Lambda}$-valued divisor on $X$. let $\cO(D)$ be a $T$-bundle on $X$ characterized by the property that for every $\alpha_i: T \rightarrow \bG_m$ we have that the induced bundle $\alpha_i(\cO(D))= \cO(\alpha_i(D))$. Let $\omega(D)$ denote the product of $\check{\rho}(\omega)$ and $\cO(D)$. Set $$\Bun_N^{\omega(D)}:= \Bun_B \times_{\Bun_T} \{\omega(D)\}.$$
	Recall from \cite[4.1.3]{FGV} that for $D$ a $\check{\Lambda}^+$-valued divisor we have a canonical function
	$$\psi_D: \Bun_N^{\omega(-D)} \rightarrow \bA^1.$$
	\begin{df}
		For such  a $\check{\Lambda}^+$-valued divisor $D$ on $X$ define $$\coeff_D: \D_{\kappa}(\Bun_G) \rightarrow \Vect$$
		by the formula
		$$\cF \mapsto C_{\dR}(\Bun_N^{\omega(-D)} , p^!(\cF) \otimes \psi_D^!(\operatorname{exp}))[-\dim \Bun_N^{\omega(-D)}],$$
		where $p: \Bun_N^{\omega(-D)} \rightarrow \Bun_G$.
	\end{df}
	Note that taking de Rham cohomology makes sense since the twisting $\cG_{\kappa}$ is canonically trivial on $\Bun_N^{\omega(-D)}$. Here $\operatorname{exp}$ stands for the exponential sheaf on $\bA^1$.
	
	\subsubsection{} The collection of the functors $\{\coeff_D \}$ for all $\check{\Lambda}^+$-valued divisors can be assembled in one functor $\coeff^{\operatorname{loc}}$ as follows (\cite[9.3]{GLCII}). Let $\Gra_{G, \omega}$ be the $\check{\rho}(\omega)$-twist of the Beilinson-Drinfeld grassmannian in the sense of \cite[1.2]{GLCII}. Consider the map 
	$$
	\pi: \Gra_{G, \omega} \rightarrow \Bun_G,
	$$
	and the corresponding functor
	$$\pi^!: \D_{\kappa}(\Bun_G) \rightarrow \D_{\kappa}(\Gra_{G, \omega})^{LN_{\omega}, \chi},$$
	where $\chi$ is the non-degenerate character. 
	To simplify the notation, we will work over a fixed point $\underline{x} \in \Ranp$ and will define a functor 
	$$\coeff^{\operatorname{loc}}_{\underline{x}}: \D(\Bun_G) \rightarrow \D_{\kappa}(\Gra_{G, \omega, \underline{x}})^{LN_{\omega}, \chi}.$$
	Note that $LN_{\omega}$ is a filtered union of subschemes $N^{\alpha}$. For each $\alpha$ consider
	$$\Av_*^{N^{\alpha}, \chi}: \D_{\kappa}(\Gra_{G, \omega, \underline{x}}) \rightarrow \D_{\kappa}(\Gra_{G, \omega, \underline{x}})^{N^{\alpha}, \chi} \hookrightarrow \D_{\kappa}(\Gra_{G, \omega, \underline{x}}) .$$
	For $N^{\alpha} \subset N^{\alpha^{\prime}}$ we have a natural transformation
	\begin{equation}\label{nattr}
	\Av_*^{N^{\alpha^{\prime}}} \rightarrow \Av_*^{N^{\alpha}}.
	\end{equation}
	\begin{lm}{\cite[Lemma 9.3.4]{GLCII}}
		For $N^{\alpha}$ large enough the natural transformation 
		$$\Av_*^{N^{\alpha^{\prime}}} \circ \pi_{\underline{x}}^! \rightarrow \Av_*^{N^{\alpha}} \circ \pi_{\underline{x}}^!$$
		induced by (\ref{nattr}) is an equivalence.
	\end{lm}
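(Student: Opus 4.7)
The plan is to exploit the fact that $\pi_{\underline{x}}^!(\cF)$ is automatically equivariant with respect to a very large group, so that the averaging procedure stabilizes as soon as the finite-dimensional subgroup $N^{\alpha}$ captures the combinatorics of the $LN_{\omega}$-orbits that genuinely contribute. More precisely, since $\pi_{\underline{x}}:\Gra_{G,\omega,\underline{x}}\to\Bun_{G}$ is the quotient map by the group $L^{+}G_{\omega,\underline{x}}$ (acting by change of trivialization at $\underline{x}$), the pullback $\pi_{\underline{x}}^{!}(\cF)$ is canonically equivariant for $L^{+}G_{\omega,\underline{x}}$. In particular, it is equivariant for the pro-unipotent subgroup $L^{+}N_{\omega,\underline{x}}=LN_{\omega}\cap L^{+}G_{\omega,\underline{x}}$.

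The first step is to unwind the natural transformation in (\ref{nattr}). For $N^{\alpha}\subset N^{\alpha'}$, this map is realized, after applying to $\pi_{\underline{x}}^{!}(\cF)$, by integration over the quotient $N^{\alpha'}/N^{\alpha}$ against the restricted character. Thus it suffices to show that for $\alpha$ large enough, restriction to $N^{\alpha}$ already sees everything: any further averaging either factors through an action that is trivialized by the $L^{+}N_{\omega,\underline{x}}$-equivariance, or produces an isomorphism with the identity functor via the standard ``character trivial on the stabilizer'' principle.

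The second step is to decompose $\Gra_{G,\omega,\underline{x}}$ into locally closed $LN_{\omega}$-invariant pieces indexed by the relevant semi-infinite cells. On each cell, the stabilizer of any geometric point inside $LN_{\omega}$ is commensurable with $L^{+}N_{\omega,\underline{x}}$, and hence is contained (up to a finite-dimensional discrepancy) in every sufficiently large $N^{\alpha}$. Using nondegeneracy of $\chi$, one argues cell-by-cell that the averaging functor $\Av_{*}^{N^{\alpha},\chi}$ applied to $\pi_{\underline{x}}^{!}(\cF)$ stabilizes: this is a relative statement over $\Bun_{G}$ which can be checked fiberwise, and on each fiber it reduces to the classical observation that a unipotent integral against a nontrivial character vanishes once the integration domain is large enough relative to the stabilizer.

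The main technical obstacle will be the passage from a pointwise/orbitwise statement to the global statement on the ind-scheme $\Gra_{G,\omega,\underline{x}}$: one must choose $\alpha$ uniformly in a coherent manner. This is handled by noting that the only orbits contributing to the $(LN_{\omega},\chi)$-averaging are the ``nondegenerate'' ones, and these form a countable union of quasi-compact loci whose stabilizers all lie in $L^{+}N_{\omega,\underline{x}}$; thus a single $N^{\alpha}$ large enough to contain a suitable open in $LN_{\omega}/L^{+}N_{\omega,\underline{x}}$ works simultaneously for all of them. Once this uniformity is in place, the transition maps become equivalences and the lemma follows.
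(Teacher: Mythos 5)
The paper does not prove this lemma; it cites it from \cite[Lemma 9.3.4]{GLCII}, so there is no internal argument to compare against. Judged on its own terms, your sketch correctly identifies the two key structural facts — that $\pi_{\underline{x}}^!(\cF)$ is automatically $L^+G$-equivariant (hence $(L^+N_{\omega},\chi|_{L^+N_\omega})$-equivariant, since the residue character is trivial on arcs), and that the nondegeneracy of $\chi$ on the semi-infinite cells must be exploited. But the logical mechanism is garbled at the crucial step. You assert that the further averaging over $N^{\alpha'}/N^{\alpha}$ ``factors through an action that is trivialized by the $L^+N_{\omega,\underline{x}}$-equivariance''; this cannot be right as stated, because once $\alpha$ is large the groups $N^{\alpha}$ already contain $L^+N_{\omega}$, so the quotient $N^{\alpha'}/N^{\alpha}$ lives entirely in the ``negative'' direction and has nothing to do with $L^+N_{\omega}$. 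The equivariance cannot directly trivialize that extra integration.

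Relatedly, you invoke the ``classical observation that a unipotent integral against a nontrivial character vanishes'' but then conclude that ``the transition maps become equivalences''. These two statements are not the same, and the bridge between them is precisely the missing content of the proof. The correct shape of the argument is to identify the \emph{cone} of the transition map $\Av_*^{N^{\alpha'}}\pi_{\underline{x}}^!\to\Av_*^{N^{\alpha}}\pi_{\underline{x}}^!$ with a contribution supported on semi-infinite strata whose stabilizers in $LN_{\omega}$ carry a nontrivial restriction of $\chi$, and then apply the $L^+G$-equivariance of $\pi_{\underline{x}}^!(\cF)$ to force that contribution to vanish (an object equivariant for a unipotent group with trivial character has zero $(\,\cdot\,,\chi)$-average when $\chi$ is nontrivial on that group). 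Without this cone/stratum identification the vanishing principle has nothing to bite on, and ``vanishing of boundary terms'' does not by itself produce an isomorphism. You should also justify the reduction to fibers of $\pi_{\underline{x}}$ (it holds here because $N^{\alpha}$-orbits are contained in $L^+G$-orbits, but a base-change argument is needed) and make explicit that the uniform choice of $\alpha$ comes from a fixed finite set of strata whose stabilizers see $\chi$ trivially, rather than from a vague quasi-compactness claim.
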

	Therefore for large enough $N^{\alpha}$ the functors $ \Av_*^{N^{\alpha}}$ coincide. So in particular, they map to $$\cap_{N^{\alpha}}\D_{\kappa}(\Gra_{G, \omega, \underline{x}})^{N^{\alpha}, \chi} \cong \D_{\kappa}(\Gra_{G, \omega, \underline{x}})^{LN_{\omega}, \chi}.$$
	The resulting functor is $\coeff^{\operatorname{loc}}_{\underline{x}}$. For a fixed $\check{\Lambda}^+$-valued divisor $D$ consider the composition 
	\begin{equation}
	\coeff^{\operatorname{loc}}_D: \D_{\kappa}(\Bun_G) \xrightarrow{\coeff^{\operatorname{loc}}} \D_{\kappa}(\Gra_{G, \omega})^{N^{\alpha}, \chi} \xrightarrow{(-)^!} \D_{\kappa}(\Gra_{B, \omega, D})^{N^{\alpha}, \chi} \cong \Vect,
	\end{equation}
	where $\Gra_{B, \omega, D}$ is the connected component of $\Gra_B$ corresponding to $D$. 
	\begin{lm}
		We have $$\coeff_D\cong \coeff^{\operatorname{loc}}_D[\dim \Bun_N^{\omega(-D)}].$$
	\end{lm}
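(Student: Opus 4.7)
The plan is to compare both functors by passing to the Beilinson--Drinfeld Grassmannian and invoking base change. Concretely, I would use the uniformization $\pi_{\underline{x}}\colon \Gra_{G,\omega,\underline{x}} \to \Bun_G$ and present $\Bun_N^{\omega(-D)}$ as a global form of the $LN_{\omega}$-orbit through a point of the component $\Gra_{B,\omega,D}$. The key geometric input is that $\Bun_N^{\omega(-D)}$ can be written as a twist of this orbit by the global gluing data at a point disjoint from $\underline{x}$, in the same spirit as the usual local-to-global presentations of $\Bun_N$-type stacks via affine Grassmannians.

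Second, I would check that under this identification the Whittaker function $\psi_D\colon \Bun_N^{\omega(-D)}\to \bA^1$ agrees (on the relevant orbit, up to the global twist) with the pullback of the non-degenerate character $\chi\colon LN_{\omega}\to \bA^1$. This is essentially the design feature of $\psi_D$ from \cite[4.1.3]{FGV}: the Whittaker function is built precisely so that it corresponds to $\chi$ after passing to the local picture. Combining this matching with base change along $\pi_{\underline{x}}$, the de Rham integral $C_{\dR}(\Bun_N^{\omega(-D)}, p^!\cF\otimes\psi_D^!\exp)$ rewrites as the $!$-restriction of $\Av_*^{N^{\alpha},\chi}(\pi_{\underline{x}}^!\cF)$ to $\Gra_{B,\omega,D}$ for $N^{\alpha}$ sufficiently large. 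By the preceding \cite[Lemma 9.3.4]{GLCII}, this latter object is precisely what is used to define $\coeff^{\operatorname{loc}}_D(\cF)$.

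The main technical obstacles will be twofold. First, one must carefully track the cohomological shift $[\dim \Bun_N^{\omega(-D)}]$: this shift arises from the comparison between $!$-pullback to an $LN_{\omega}$-orbit on the Grassmannian versus de Rham integration over the global form $\Bun_N^{\omega(-D)}$, and one needs to verify the dimension count is consistent across the uniformization (using that $\pi_{\underline{x}}$ has fibers of the expected dimension modulo the arc group). Second, one must check that the twisting $\cG_{\kappa}$ is canonically trivialized on the $LN_{\omega}$-orbit through $\Gra_{B,\omega,D}$ in a way compatible with its trivialization on $\Bun_N^{\omega(-D)}$, so that the identifications of sheaves make sense in the twisted categories; this should follow from the factorization properties of $\cG_{\kappa}$ and its vanishing along unipotent subgroups.
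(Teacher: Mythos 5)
Your high-level strategy (compare the global integral over $\Bun_N^{\omega(-D)}$ with the local averaging on the Grassmannian via $\pi$) is morally the right picture, but the proposal has a genuine gap at exactly the point where the real work happens. You describe $\Bun_N^{\omega(-D)}$ as ``a twist of the $LN_{\omega}$-orbit through a point of $\Gra_{B,\omega,D}$,'' and then claim that base change along $\pi_{\underline{x}}$ rewrites the de Rham integral as a $!$-restriction of the averaged sheaf. But $\Bun_N^{\omega(-D)}$ is not an orbit, and $\Gra_{B,\omega,D}$ is not isomorphic to it; rather, $\Gra_{B,\omega,D}$ \emph{fibers over} $\Bun_N^{\omega(-D)}$ via an ind-schematic map $\pi_{N,\omega,D}$, and the substantive content of the lemma is a pushforward identity along that fibration: one must show
\[
(\pi_{N, \omega, D})_*(\pi_{N, \omega, D} \circ \psi_D)^*(\exp) \cong \psi_D^*(\exp).
\]
This is not a formal base-change consequence; it is a nontrivial cohomological statement (the fibers of $\pi_{N,\omega,D}$ are positive-dimensional ind-schemes, and what is being asserted is that integrating the character sheaf along them gives back the character sheaf downstairs). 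Your proposal flags ``tracking the shift'' and ``checking the trivialization'' as the obstacles, but those are the routine parts; the missing idea is this pushforward identity.

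The paper also takes a different overall route that makes the problem tractable: it passes to the \emph{dual} functors $\operatorname{Poinc}_{*,D}$ and $\operatorname{Poinc}^{\operatorname{loc}}_{*,D}$, turning a comparison of functors into a comparison of two explicit objects of $\D_{\kappa}(\Bun_G)_{\operatorname{co}}$. This reduces the lemma precisely to the displayed pushforward identity. That identity is then established by observing that the $LT$-action carries the pair $(\Gra_{B,\omega,D} \to \Bun_N^{\omega(-D)}, \psi_D)$ to the pair $(\Gra_{N,\omega} \to \Bun_N^{\omega}, \psi_0)$, i.e. it reduces to the case $D=0$, which is exactly \cite[(9.19)]{GLCII}. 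Neither the dualization step nor the $LT$-equivariance reduction appears in your proposal, and without them I do not see how to close the argument.
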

	
	\begin{proof}
		As in \cite[Lemma 9.6.7]{GLCII} we will compare the dual functors, denoted by $$\operatorname{Poinc}_{*, D}[\dim \Bun_N^{\omega(-D)}]$$ and $$\operatorname{Poinc}_{*, D}^{\operatorname{loc}}[\dim \Bun_N^{\omega(-D)}]$$ respectively. Explicitly, we have that $\operatorname{Poinc}_{*, D}$ as an object in $\D_{\kappa}(\Bun_G)_{\operatorname{co}}$ is given by 
		$$p_*(\psi_D)^*(\exp).$$
		Let us also describe $\operatorname{Poinc}_{*, D}^{\operatorname{loc}}$. Consider a map
		\begin{equation}\label{GR_B to Bun_G}
		\Gra_{B, \omega, D} \hookrightarrow \Gra_{G, \omega} \rightarrow \Bun_G.
		\end{equation}
		Note that the map (\ref{GR_B to Bun_G}) factors as $$\Gra_{B, \omega, D} \xrightarrow{\pi_{N, \omega, D}}\Bun_N^{\omega(-D)} \rightarrow \Bun_G.$$
		Then $\operatorname{Poinc}_{*, D}^{\operatorname{loc}}$ is given by the pushforward of $(\pi_{N, \omega, D} \circ \psi_D)^*(\exp)$ along (\ref{GR_B to Bun_G}). Hence it suffices to show that 
		\begin{equation}\label{9.19}
		(\pi_{N, \omega, D})_*(\pi_{N, \omega, D} \circ \psi_D)^*(\exp) \cong \psi_D^*(\exp).
		\end{equation}
		However, the action of $LT$ on $$\Gra_{B, \omega, D} \xrightarrow{\pi_{N, \omega, D}}\Bun_N^{\omega(-D)} $$
		identifies it with 
		$$\Gra_{N, \omega} \xrightarrow{\pi_{N, \omega}}\Bun_N^{\omega},$$
		and the character $\psi_D$ maps to $\psi_0$ under this identification. Thus, since \cite[(9.19)]{GLCII} holds, we have that (\ref{9.19}) also holds.
	\end{proof}

	\subsection{Singular support of twisted D-modules.}
	
	\subsubsection{Modules over TDOs and singular support.} Let $X$ be a smooth classical scheme, $\cD$ be a sheaf of TDOs. The goal of this subsection is to define singular support of coherent modules over $\cD$. The definition will follow the one for the usual D-modules. 
	
	\begin{df}
		Let $(M, F)$ be a filtered $\cD$-module. We say that $F$ is a good
		filtration of M if the following equivalent conditions are satisfied: 
		\begin{enumerate}
			\item $\gr^F M$ is coherent over $\pi_* \cO_{T^*X}$,
			\item $F_iM$ is coherent over $\cO_X$ for each $i$, and there exists $i_0 \gg 0$ such that for every $j \geq 0$, $i \geq i_0$
			$$(F_j\cD) F_iM = F_{i+j}M.$$
			\item There exist locally a surjective $\cD$-linear morphism $\Phi: \cD^{\oplus m} \rightarrow M$ and integers $n_j$, $j=1, ..., m$ such that for every $i \in \bZ$
			$$\Phi(F_{i-n_1}\cD \oplus F_{i-n_2}\cD \oplus ... \oplus F_{i-n_m}\cD) = F_iM.$$
		\end{enumerate}
	\end{df}
	
	\begin{pr}
		\begin{enumerate}
			\item A $\cD$-module is coherent if and only if it admits a globally defined good filtration.
			\item Let $F$, $F^{\prime}$ be two filtrations of a  $\cD$-module $M$ and assume that $F$ is good. Then
			there exists $i \gg 0$  such that for every $i \in \bZ$
			$$F_iM \subset F_{i+i_0}^{\prime}M.$$
			Moreover, if $F^{\prime}$ is also a good filtration there exists $i \gg 0$ such that for every $i \in \bZ$
			$$F_{i-i_0}^{\prime}M \subset F_iM \subset F_{i+i_0}^{\prime}M.$$
		\end{enumerate}
	\end{pr}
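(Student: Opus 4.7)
The plan is to adapt the standard arguments for ordinary D-modules to the setting of twisted differential operators. Fundamentally the situation is the same because a sheaf of TDOs $\cD$ carries its natural order filtration $F_\bullet\cD$ with $\gr^F \cD \cong \pi_*\cO_{T^*X}$, and any filtration of a $\cD$-module $(M,F)$ is by definition compatible with the order filtration, meaning $F_i\cD \cdot F_jM \subset F_{i+j}M$. The equivalence of the three conditions in the definition of a good filtration is a standard filtered-algebra exercise using that $\gr^F\cD$ is Noetherian, so I would take it as known.

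For part (1), the implication ``good $\Rightarrow$ coherent'' follows immediately from the second characterization: the condition $F_{i+j}M = (F_j\cD) F_iM$ for $i \geq i_0$ gives $M = \cD \cdot F_{i_0}M$, and since $F_{i_0}M$ is $\cO_X$-coherent, any local surjection $\cO_X^{\oplus m} \twoheadrightarrow F_{i_0}M$ extends by $\cD$-linearity to $\cD^{\oplus m} \twoheadrightarrow M$. For the converse, starting from a local presentation $\cD^{\oplus m} \twoheadrightarrow M$, one defines $F_iM$ as the image of $\oplus_j F_{i-n_j}\cD$ for suitably chosen shifts $n_j$; this produces a local good filtration. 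To globalize, one exhibits a single global $\cO_X$-coherent subsheaf $M_0 \subset M$ that generates $M$ as a $\cD$-module, and then sets $F_iM := (F_i\cD) \cdot M_0$. The existence of such $M_0$ follows from writing any $\cD$-coherent $M$ as a filtered colimit of its coherent $\cO_X$-subsheaves and using quasi-compactness to pass from the local generating submodules to a global one.

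For part (2), I use the third characterization of good filtrations applied to $F$. Locally there is a surjection $\Phi: \cD^{\oplus m} \to M$ and integers $n_j$ realizing $F_iM = \Phi(\oplus_j F_{i-n_j}\cD)$. Let $\phi_j := \Phi(e_j)$; these generate $M$ over $\cD$. Since $F'$ is exhaustive, each generator lies in some $F'_{N_j}M$ locally, and by compatibility of $F'$ with the order filtration,
$$F_iM = \sum_j F_{i-n_j}\cD \cdot \phi_j \subset \sum_j F'_{(i-n_j) + N_j}M \subset F'_{i+i_0}M$$
for any $i_0 \geq \max_j(N_j - n_j)$. Quasi-compactness of $X$ makes $i_0$ uniform. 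When $F'$ is also good, symmetrically swapping the roles of $F$ and $F'$ gives the reverse inclusion, and replacing $i_0$ by the maximum of the two constants yields the two-sided bound.

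The main obstacle is the globalization step in part (1): producing a globally defined good filtration from local ones. On an affine scheme this is straightforward from coherence, but on a general smooth scheme $X$ one must exhibit a global coherent $\cO_X$-subsheaf generating $M$ over $\cD$, which requires the combination of quasi-compactness and the fact that any quasi-coherent module on a Noetherian scheme is a filtered colimit of its coherent subsheaves. Once this is in hand, every remaining step is a routine manipulation inside the filtered algebra $\cD$.
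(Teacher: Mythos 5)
Your proof is correct and follows exactly the route the paper takes: the paper's proof is the one-line citation ``Follows [HTT, Theorem 2.3],'' and your argument is a faithful write-out of that theorem's proof, together with the key (and correct) observation that a TDO carries the same order filtration with $\gr^F\cD \cong \pi_*\cO_{T^*X}$, so the entire filtered-module argument goes through verbatim.
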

	
	\begin{proof}
		Follows \cite[Theorem 2.3]{HTT}.
	\end{proof}
	
	\begin{df}
		Let $M$ be a coherent $\cD$-module and choose a good filtration $F$ on it. We define the singular support $\SingS(M)$ of $M$ to be the support of the coherent $\cO_{T^*X}$-module $$\cO_{T^*X} \otimes_{\pi^* \pi_* \cO_{T^*X}} \pi^*(\gr^F M).$$
	\end{df}
	
	\begin{lm}
		The variety $\SingS(M)$ does not depend on the choice of a good filtration. 
	\end{lm}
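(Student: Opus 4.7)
The plan is to prove the equivalent statement that the radical $\sqrt{\mathrm{Ann}_{\gr \cD}(\gr^F M)}$ is independent of the good filtration $F$: since $\SingS(M)$ is defined as the support of the coherent $\cO_{T^*X}$-module $\cO_{T^*X} \otimes_{\pi^*\pi_*\cO_{T^*X}} \pi^*(\gr^F M)$, it is cut out set-theoretically by this radical ideal. The only TDO-specific input will be the canonical isomorphism $\gr \cD \cong \pi_* \cO_{T^*X}$, which is local on $X$ and identical to the untwisted case; the rest of the argument is pure filtered-module algebra.

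Given two good filtrations $F$ and $F'$ on $M$, I would invoke the ``moreover'' part of the preceding proposition to fix a constant $i_0$ such that $F_{i-i_0} M \subset F'_i M \subset F_{i+i_0} M$ for all $i$. This two-sided comparison is the only quantitative input. The central step is then to show that every homogeneous element $\bar a \in \mathrm{Ann}_{\gr \cD}(\gr^F M)$ lies in $\sqrt{\mathrm{Ann}_{\gr \cD}(\gr^{F'} M)}$; by symmetry this yields equality of the two radicals, hence of the two supports.

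Concretely, fix a lift $a \in F_d \cD$. The hypothesis $\bar a \cdot \gr^F M = 0$ unwinds to $a \cdot F_j M \subset F_{j+d-1} M$ for every $j$; iterating gives $a^N \cdot F_j M \subset F_{j + N(d-1)} M$. For $m \in F'_j M \subset F_{j + i_0} M$, this produces
$$a^N \cdot m \in F_{j + i_0 + N(d-1)} M \subset F'_{j + 2 i_0 + N(d-1)} M,$$
and for $N \geq 2 i_0 + 1$ the right-hand side is contained in $F'_{j + Nd - 1} M$, so $\bar a^N \cdot \gr^{F'} M = 0$, as desired.

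The only subtlety -- and the reason the lemma holds -- is that the constant shift $2 i_0$ produced by switching between the two filtrations is eventually absorbed by the linear drop $-N$ in degree coming from applying $a^N$ on the $F$-side. This is precisely the place where goodness of \emph{both} $F$ and $F'$ enters, via the two-sided bound. I do not anticipate any further real obstacle: passing from homogeneous generators of the annihilator to the full radical, and from the radical to the reduced subvariety $\SingS(M)$, is standard commutative algebra, and globalizing from an affine chart to all of $X$ is immediate since $\SingS(M)$ is defined locally.
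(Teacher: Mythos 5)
Your proof is correct, and it is precisely the standard argument of \cite[Theorem 2.2.1]{HTT}, which is exactly what the paper cites for this lemma (the paper gives no further detail). You have filled in the cited argument faithfully: the two-sided sandwich $F_{i-i_0}M \subset F'_iM \subset F_{i+i_0}M$ from the preceding proposition, the degree drop from $\bar a \cdot \gr^F M = 0$, and the absorption of the constant shift $2i_0$ by the linear drop in $N$. Nothing more is needed.
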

	
	\begin{proof}
		Follows \cite[Theorem 2.2.1]{HTT}.
	\end{proof}
	
	\subsubsection{Singular support of twisted crystals.}
	
	Let $S$ be a smooth affine scheme with a twisting $\cT$ on it. Let $\cN \subset T^*S$ be a conical Zariski-closed subset. 
	
	\begin{df}
		We define the $\D_{\cT, \cN}(S)$ be ind-completion of the category $$\D_{\cT, \cN}(S)^{\text{f.g.}} \subset \D_{\cT}(S)^{\text{f.g.}},$$ obtained by requiring that each cohomology belong to $\D_{\cT, \cN}(S)^{\text{f.g.}, \heartsuit}$, where the latter is defined as above. 
	\end{df}
	
	\begin{rem}
		As in \cite[A.3.3-A.3.6]{GKRV} we can generalize the notion of twisted D-modules with singular support $\cN$ to not necessarily smooth affine schemes and algebraic stacks.
	\end{rem}
	
	\subsection{Singular support of twisted Betti sheaves.}
	
	Recall (\cite[13.2]{CF}) that for any prestack $X$ locally almost of finite type, and an analytic gerbe $\cG^{\an}$ on $X^{\an}$, we can consider the category $\Shv_{\cG^{\an}}(X)$ of Betti sheaves twisted by $\cG^{\an}$.

	\begin{df}\cite[5.1.2]{KS}
		Let $F \in \Shv_{\cG^{\an}}(S)$ for $S$ a smooth affine scheme of finite type. Let the \emph{micro-support of $F$}, denoted by $\SingS(F)$, be the subset of $T^*S^{\an}$ defined by $p\notin \SingS(F)$ if and only if there exists an open neighborhood $U$ of $p$, such that for any $x_1\in S$ and any real function $\psi$ of class $C^1$ defined in the neighborhood of $x_1$, with $\psi(x_1) = 0$, $d\psi(x_1) \in U$, we have 
		$$({f_{\psi}}_! \circ f_{\psi}^*(F))_{x_1} = 0.$$
		Here $ f_{\psi}$ is the inclusion of the subspace of $x\in S^{\an}$ such that $\psi(x) \geq 0$.
	\end{df}
	
	\begin{rem}
		As in \cite[A.3.3-A.3.6]{GKRV} we can generalize the the notion of micro-support of twisted Betti sheaves to not necessarily smooth affine schemes and algebraic stacks.
	\end{rem}
	
	\begin{lm}\cite[5.4.5]{KS}
		Let $X \rightarrow Y$ be a smooth morphism of manifolds, $\cG$ be an analytic gerbe on $Y$. Let $F \in \Shv_{\cG}(Y)$. Then
		$$\SingS(f^*F) = df \circ f^{-1}(\SingS(F)).$$	
	\end{lm}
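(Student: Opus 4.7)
The plan is to reduce the twisted statement to the untwisted microlocal statement of Kashiwara--Schapira, which is exactly \cite[5.4.5]{KS} in the classical (non-twisted) setting. The key point is that microsupport is a local notion on $T^*X^{\an}$: both sides of the claimed equality are defined pointwise, and whether or not a covector $p \in T^*X^{\an}$ lies in $\SingS(f^*F)$ depends only on the restriction of $f^*F$ to an arbitrarily small analytic neighborhood of the image point in $X^{\an}$.

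The first step is to exploit that an analytic gerbe $\cG$ on $Y^{\an}$ is locally trivial: choose an analytic open cover $\{V_i\}$ of $Y^{\an}$ together with trivializations $\cG|_{V_i} \simeq \ast$, inducing equivalences $\Shv_{\cG|_{V_i}}(V_i) \simeq \Shv(V_i)$. Setting $U_i := f^{-1}(V_i)^{\an}$, the induced gerbe $f^*\cG$ is trivialized on each $U_i$, and by naturality of $!$-pullback the composition
\[
\Shv_{\cG|_{V_i}}(V_i) \xrightarrow{f^*} \Shv_{f^*\cG|_{U_i}}(U_i) \simeq \Shv(U_i)
\]
coincides with the untwisted pullback applied to the image of $F|_{V_i}$ under the chosen trivialization. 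On overlaps the trivializations differ by tensoring with a rank-one locally constant sheaf, which does not alter microsupport, so the locally computed microsupports on $U_i$ glue to a well-defined closed conic subset $\SingS(f^*F) \subseteq T^*X^{\an}$ that is independent of choices.

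The second step is to invoke the classical statement \cite[5.4.5]{KS} on each chart: for the untwisted sheaf corresponding to $F|_{V_i}$ under the chosen trivialization, smoothness of $f$ gives $\SingS(f^*(F|_{V_i})) = df \circ f^{-1}(\SingS(F|_{V_i}))$ inside $T^*U_i$. Since both operations in the equality--- applying $df \circ f^{-1}$ and taking microsupport--- commute with restriction to the opens $U_i$ and $V_i$, assembling the identities over the cover yields the global identity $\SingS(f^*F) = df \circ f^{-1}(\SingS(F))$ in $T^*X^{\an}$.

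The only step requiring care is the verification that twisting by a rank-one locally constant sheaf (the discrepancy between two trivializations on overlaps) leaves microsupport unchanged, and that the definition of $\SingS$ for twisted sheaves given above is genuinely local in the sense just used. I expect this to be the main, but essentially routine, obstacle: it follows immediately from the definition via $f_\psi^*$ and $(f_\psi)_!$ together with the projection formula, since tensoring by a locally constant rank-one sheaf commutes with $!$-pushforward and $*$-pullback along the closed inclusions $f_\psi$ and so preserves vanishing of the relevant stalk.
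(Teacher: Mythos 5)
Your proposal is correct and takes exactly the approach the paper uses: the paper's entire proof is the single sentence that the question is local and thus reduces to the untwisted case of \cite[Proposition~5.4.5]{KS}. You fill in the locality argument carefully (local trivialization of the gerbe, gluing over a cover, and the projection-formula check that tensoring with a rank-one local system preserves microsupport), all of which is correct and is what the paper is implicitly invoking.
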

	
	\begin{proof}
		The question is local, so it reduces to the untwisted case, which is \cite[Proposition 5.4.5]{KS}.
	\end{proof}
	
	\begin{lm}\cite[5.4.4]{KS}
		Let $X \rightarrow Y$ be a proper morphism of manifolds, $\cG$ be an analytic gerbe on $Y$. Let $F \in \Shv_{f^*\cG}(X)$. Then
		$$\SingS(f_!F) = f\circ df^{-1}(\SingS(F)).$$
	\end{lm}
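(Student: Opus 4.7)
The plan is to reduce to the untwisted statement \cite[Proposition 5.4.4]{KS} by exploiting two facts: first, that micro-support is defined purely locally on the target (so that on a small enough analytic open it depends only on the local behavior of $F$), and second, that an analytic gerbe $\cG$ on a manifold $Y$ trivializes locally in the analytic topology. Thus the statement has local nature on $Y$, and we can reduce to the corresponding statement for an untrivialized sheaf.

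More precisely, I would first argue that both sides of the claimed equality are compatible with restriction to analytic opens $V \subseteq Y$: the right-hand side because $f \circ df^{-1}$ is a set-theoretic operation manifestly compatible with replacing $Y$ by $V$ and $X$ by $f^{-1}(V)$; the left-hand side because, by the pointwise definition of $\SingS$ (which only involves the functor $(f_\psi)_! f_\psi^*$ at a point and the germ of a $C^1$-function there), a point $(y,\eta)$ lies in $\SingS(f_! F)$ if and only if it lies in $\SingS((f_! F)|_V)$ for any open $V$ containing $y$. Furthermore, proper base change gives $(f_! F)|_V \cong (f|_{f^{-1}(V)})_! (F|_{f^{-1}(V)})$, so we may choose $V$ small enough that $\cG|_V$ is trivialized.

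Once $\cG|_V$ is trivialized, the pullback gerbe $f^*\cG|_{f^{-1}(V)}$ is also trivialized, and $F|_{f^{-1}(V)}$ becomes an ordinary (untwisted) sheaf on $f^{-1}(V)$; the functor $f_!$ is the ordinary proper pushforward of sheaves of $k$-modules. In this situation the desired equality is exactly \cite[Proposition 5.4.4]{KS}.

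I do not expect a serious obstacle here; the only subtle point is verifying that the pointwise characterization of micro-support given in the earlier definition is genuinely local on $Y$ (as opposed to depending on the global sheaf), and that proper base change for $f_!$ is valid in the twisted setting. The former is immediate from the definition, while the latter follows for proper $f$ from the fact that $f_! = f_*$ and $f_*$ on twisted sheaves is compatible with restriction to analytic opens on the target by construction.
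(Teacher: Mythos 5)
Your reduction-to-the-untwisted-case argument is exactly what the paper does; the paper's proof is just the one-liner ``Again, it reduces to the untwisted case, which is [KS, Proposition 5.4.4],'' and your write-up simply spells out the locality of $\SingS$, the local triviality of the gerbe, and proper base change that make this reduction precise. The approach is the same, and your elaboration of the implicit steps is correct.
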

	
	\begin{proof}
		Again, it reduces to the untwisted case, which is \cite[Proposition 5.4.4]{KS}.
	\end{proof}
	
	\subsubsection{} Recall (\cite[13.2]{CF}) that for any prestack $X$ locally almost of finite type, and an analytic gerbe $\cG^{\an}$ on $X$, we can also consider the small category of algebraically constructible Betti sheaves $\Shv_{\cG^{\an}}(X)^{\operatorname{cnstr}}$ and its ind-completion $\Shv_{\cG^{\an}}(X)$. Recall also that we have \cite[Corollary 13.3.1]{CF} a generalization of the Riemann-Hilbert correspondence to the twisted setting:
	\begin{equation}\label{RH}
	\rh: \Shv_{\cG}(X) \xrightarrow{\sim} \Shv_{\cG^{\an}}(X),
	\end{equation}
	where $X$ is a prestack locally almost of finite type and $\cG$ a tame de Rham gerbe on $X$. 
	
	\begin{lm}
		For $F \in \Shv_{\cG}(X)$ we have 
		$$\SingS(F) = \SingS(\rh(F)).$$
	\end{lm}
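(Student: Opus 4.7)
The plan is to reduce to the classical (untwisted) Kashiwara--Schapira statement that under the Riemann--Hilbert correspondence, the singular support of a holonomic regular D-module coincides with the micro-support of the associated constructible sheaf.

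First, I would reduce to the case where $X$ is a smooth affine scheme of finite type. Both sides of the asserted equality are, by design, defined as limits/glueings from smooth affine charts: the paper generalizes both $\SingS$ for twisted crystals and $\SingS$ for twisted Betti sheaves to arbitrary prestacks using the same recipe (the remarks following the respective definitions refer to the \cite[A.3.3--A.3.6]{GKRV} machinery). Moreover, the twisted Riemann--Hilbert functor $\rh$ of \eqref{RH} is compatible with smooth pullback. Combining these, the identity $\SingS(F)=\SingS(\rh(F))$ on $X$ follows from the same identity on each smooth affine chart, provided the relevant pullback formulas for both $\SingS$ invariants agree, which for Betti sheaves is recorded in the preceding lemma from \cite[5.4.5]{KS} and for D-modules is standard (compatibility of the characteristic variety with smooth pullback).

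Second, on a smooth affine $S$, I would trivialize the gerbe $\cG$ locally. A de Rham tame gerbe is étale-locally (equivalently, analytically-locally on $S^{\an}$) trivial, and a local trivialization induces an equivalence $\D_{\cG}(S)\simeq \D(S)$, respectively $\Shv_{\cG^{\an}}(S)\simeq \Shv(S^{\an})$, under which $\rh$ matches the usual Riemann--Hilbert equivalence. Both notions of singular support are defined so as to be preserved under such a trivialization: on the D-module side, $\SingS$ is read off from the associated graded of a good filtration, which is insensitive to twisting by a locally trivial TDO (the filtration exists locally and its associated graded is a module over $\pi_*\cO_{T^*S}$ independent of the twisting); on the Betti side, the definition via the vanishing of $(f_{\psi})_!f_{\psi}^*$ is manifestly local and, being a question about stalks in $S^{\an}$, is unaffected by tensoring with a locally trivial gerbe.

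Thus, after passing to a local trivialization, the equality becomes the classical assertion $\SingS(M)=\SingS(\rh(M))$ for a holonomic regular D-module $M$ on $S$, which is the standard Kashiwara theorem (cf.\ \cite[Theorem~11.3.3]{KS} or the discussion in \cite[Chapter~4]{HTT}). Reassembling the local identifications gives the claim on all of $X$.

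The only genuine point requiring care is verifying that the trivializations used for the D-module side and for the Betti side are compatible under $\rh$, so that the two local identifications of $\SingS$ with the untwisted $\SingS$ literally match in $T^*S^{\an}$. This compatibility is built into the construction of the twisted Riemann--Hilbert functor of \cite[13.3]{CF}, which is set up so that on a local trivialization of $\cG$ it reduces to the classical RH correspondence; granting this, the reduction above completes the proof.
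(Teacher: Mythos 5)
Your proof is correct and takes essentially the same route as the paper, which simply says ``The question is local, so the statement reduces to \cite[Theorem 11.3.3]{KS}.'' You have spelled out the localization and gerbe-trivialization steps that the paper leaves implicit.
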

	
	\begin{proof}
		The question is local, so the statement reduces to \cite[Theorem 11.3.3]{KS}.
	\end{proof}
	
	\subsection{Twisted miscrostalks.}
	\subsubsection{} Let $Y$ be a real analytic manifold with an analytic gerbe $\cG$, $F: Y \rightarrow \bR$ a smooth function.
	
	\begin{df}
		We define twisted vanishing cycles functor $$\phi^{\cG}_F: \Shv_{\cG}(Y) \rightarrow \Shv_{\cG}(Y^{F=0})$$
		by first $!$-restricting to the locus where $F \geq 0$ and then $*$-restricting to the locus where $F=0$. 
		
		Further $*$-restricting to $y_0 \in Y^{F=0}$ we get $\phi^{\cG}_{F, y_0}$. 
	\end{df}
		Let $\Lambda \subset T^*Y$ be a subanalytic closed conic Lagrangian and $\Shv_{\Lambda, \cG}$ be category of twisted Betti sheaves with singular support in $\Lambda$. Let $p: T^*Y \rightarrow Y$. 
	\begin{pr}\label{microstalk independence}
		For any smooth point $\xi_0 \in \Lambda$ and any function $F$ on $Y$ such that $dF$ intersects $\Lambda$  transversely at $\xi_0$, the functor $\phi^{\cG}_{F, p(\xi_0)}[\frac{\ind}{2}]$ does not depend on the choice of such function (depends only on $\xi_0$). Here $\ind$ is the Maslov index of the triple ($dF$, $\Lambda$, $T^*_{p(\xi_0)}Y$) in the symplectic
		vector space $T_{\xi_0}^* T^*Y$.
	\end{pr}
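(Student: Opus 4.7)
The plan is to reduce to the classical microlocal Morse lemma of Kashiwara--Schapira \cite[Section 7.5]{KS}, exploiting the fact that vanishing cycles are a local construction at $p(\xi_0)$ and that every analytic gerbe is locally trivial.

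First, I would pass to a contractible open neighborhood $U$ of $p(\xi_0)$ on which the gerbe $\cG$ trivializes. A choice of such trivialization induces an equivalence $\Shv_{\cG}(U) \iso \Shv(U)$, and both the definition of singular support (via local stalk-vanishing conditions) and the definition of the vanishing cycles functor $\phi^{\cG}_{F, p(\xi_0)}$ (via iterated $!$- and $*$-restrictions supported arbitrarily close to $p(\xi_0)$) are purely local near $p(\xi_0)$. Hence under the trivialization, $\Shv_{\Lambda, \cG}(U)$ corresponds to $\Shv_{\Lambda|_U}(U)$, and $\phi^{\cG}_{F, p(\xi_0)}$ corresponds to the classical untwisted vanishing cycles functor $\phi_{F, p(\xi_0)}$.

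Next I would invoke the classical microlocal Morse lemma: for any two smooth functions $F_0, F_1$ on $U$ whose differentials meet $\Lambda|_U$ transversely at the common smooth point $\xi_0$, and for every $\cF \in \Shv_{\Lambda}(U)^{\operatorname{cnstr}}$, there is a canonical isomorphism
\[
\phi_{F_0, p(\xi_0)}(\cF)\bigl[\tfrac{\ind_0}{2}\bigr] \;\simeq\; \phi_{F_1, p(\xi_0)}(\cF)\bigl[\tfrac{\ind_1}{2}\bigr],
\]
where $\ind_i$ is the Maslov index of the triple $(dF_i, \Lambda, T^*_{p(\xi_0)}Y)$ at $\xi_0$. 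Concretely this is obtained by connecting $F_0$ to $F_1$ through a generic path $F_t$ of functions whose differentials remain transverse to $\Lambda$ at $\xi_0$ (possible since transversality at a single smooth point is an open, nonempty condition), applying a non-characteristic deformation argument to track how the vanishing cycles vary along the path, and recording the half-integer shift via the Maslov class of the resulting loop of Lagrangians in the symplectic space $T_{\xi_0}T^*Y$. Pulling this isomorphism back through the trivialization of $\cG$ produces the asserted canonical isomorphism in the twisted setting.

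The main obstacle is primarily bookkeeping: one must check that the Maslov shift is insensitive to the choice of local trivialization of $\cG$, which is automatic since the Maslov index is a symplectic invariant of the triple of Lagrangians in $T_{\xi_0} T^*Y$ and makes no reference to the gerbe. All nontrivial content is therefore imported from \cite{KS}, and the twisted statement follows formally from local triviality of $\cG$ together with the locality of vanishing cycles.
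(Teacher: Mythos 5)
Your proposal is correct and follows essentially the same route as the paper: localize near $p(\xi_0)$ to a neighborhood on which the gerbe trivializes, observe that vanishing cycles (being built from $!$- and $*$-restrictions supported near $p(\xi_0)$) are insensitive to this passage, and then quote \cite[Proposition 7.5.3]{KS} for the untwisted statement. The paper's proof is shorter, simply recording the equivalence $\phi^{\cG}_{F, p(\xi_0)}[\tfrac{\ind}{2}] \cong \phi^{j^!\cG}_{F|_U, p(\xi_0)}[\tfrac{\ind}{2}] \circ j^!$ for the open embedding $j:U\hookrightarrow Y$ and then citing the reference, whereas you additionally sketch the content of the Kashiwara--Schapira argument and explicitly check the Maslov shift is trivialization-independent — both reasonable but not required.
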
 
	
	\begin{proof}
		Let $j:U \hookrightarrow Y$ be an open neighborhood of $p(\xi_0)$ such that $j^!(\cG)$ is trivial. Note that $$\phi^{\cG}_{F, p(\xi_0)}[\frac{\ind}{2}] \cong \phi^{j^!\cG}_{F|_U, p(\xi_0)}[\frac{\ind}{2}] \circ j^!.$$
		But since $j^!(\cG)$ is trivial we reduced the statement to the case of \cite[Proposition 7.5.3]{KS}. 
	\end{proof}
	
	\begin{df}
		In the setup of Proposition \ref{microstalk independence} we will cal $\phi^{\cG}_{F, p(\xi_0)}[\frac{\ind}{2}]$ the microstalk functional at $\xi_0$ and denote it by $m_{\xi_0}$.
	\end{df}
	
	\subsubsection{}  Now let $Y$ be a complex analytic manifold and $\Lambda \in T^*Y$ is a complex subanalytic closed conic Lagrangian, $\xi_0 \in \Lambda$ a smooth point .
	
	\begin{lm}\label{properties of ms}
		The twisted microstalk at $\xi_0 \in \Lambda$ is t-exact and commutes with Verdier duality.
	\end{lm}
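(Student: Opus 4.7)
The plan is to reduce to the untwisted case via local trivialization of the gerbe, and then invoke classical results of Kashiwara--Schapira.

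First, pick an open embedding $j: U \hookrightarrow Y$ with $p(\xi_0) \in U$ small enough that $j^!\cG$ is analytically trivial on $U$; this is possible because analytic gerbes on a complex manifold are locally trivial, and one can always shrink $U$ around the single point $p(\xi_0)$. Exactly as in the proof of Proposition \ref{microstalk independence}, the microstalk factors as
\begin{equation*}
m_{\xi_0} \cong m_{\xi_0}^{j^!\cG} \circ j^!,
\end{equation*}
where $m_{\xi_0}^{j^!\cG}$ denotes the microstalk on $\Shv_{j^!\cG}(U)$; since $j^!\cG$ is trivial, we may identify the target with the untwisted category $\Shv_{\Lambda|_{T^*U}}(U)$.

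Next I would verify that both properties descend through $j^!$. The functor $j^!$ for an open embedding coincides with $j^*$, which is t-exact for the perverse t-structure and commutes with Verdier duality; moreover, it sends $\Shv_{\Lambda, \cG}(Y)$ to $\Shv_{\Lambda|_{T^*U}, j^!\cG}(U)$. Hence it suffices to prove the two properties for the untwisted microstalk at $\xi_0$ on $\Shv_{\Lambda|_{T^*U}}(U)$.

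The untwisted statement is the content of \cite[Proposition 10.3.3 and \S 10.3]{KS} (see also the discussion of microstalks of perverse sheaves on complex conic Lagrangians as, for instance, in the relevant sections of the recent microlocal literature). Namely, since $\Lambda$ is a \emph{complex} subanalytic conic Lagrangian, one can locally choose $F$ to be the real part of a holomorphic function whose differential at $p(\xi_0)$ represents $\xi_0$; for such a choice the Maslov shift $[\tfrac{\ind}{2}]$ coincides with the standard shift making $\phi_F$ perverse t-exact on $\Shv_{\Lambda|_{T^*U}}(U)$, and compatibility of vanishing cycles with Verdier duality (together with the fact that the Maslov index of the triple is unchanged under dualization) yields the commutation with $\bD$.

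The main thing to be careful about is the independence statement (already established in Proposition \ref{microstalk independence}), which ensures the shift really is given by the Maslov index and the t-exactness statement does not depend on the auxiliary choice of $F$; having that in hand, the reduction to the untwisted, holomorphic-$F$ case is routine.
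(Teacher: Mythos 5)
The paper actually omits a proof of this lemma, so there is no written argument to compare against; but the surrounding results, Proposition \ref{microstalk independence} and Proposition \ref{microstalk_cc}, are both proved by exactly the local-trivialization-of-the-gerbe maneuver you employ, so your strategy is clearly the intended one.

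Your argument is correct in substance. Two small points worth tightening. First, the KS citation is imprecise: the perverse t-exactness of vanishing cycles along the real part of a holomorphic function (with the appropriate shift) is usually attributed to Kashiwara/Gabber and appears in \cite{KS} around Corollary 10.3.13 and the exercises to Chapter X, not at 10.3.3; alternatively one could cite the fact that a sheaf with complex Lagrangian microsupport is locally, in a conic neighborhood of a smooth point $\xi_0$, isomorphic in $\Shv_\Lambda(U;\xi_0)$ to a shifted local system on a complex submanifold, and compute $\phi_F$ directly there (which is essentially what the paper does in the proof of Proposition \ref{microstalk_cc}). Second, you should say a word about why the composition $m_{\xi_0} \cong m^{j^!\cG}_{\xi_0}\circ j^!$ interacts correctly with Verdier duality: this is clear because for an open embedding $j^!=j^*$ so $j^!$ commutes with $\bD$ on the nose, but it is worth recording. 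With those touch-ups the argument is complete and fills the gap the paper leaves.
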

	
	\begin{pr}\label{microstalk_cc}
		For $\cF \in \Shv_{\Lambda, \cG}(Y)^{\heartsuit, \operatorname{cnstr}}$ denote by $$\CC(\cF):= \sum_{\beta \in \Irr(\Lambda)} c_{\beta, \cF} [\beta]$$ the characteristic cycle of $\cF$. Then we have 
		$$\chi(m_{\xi_0}(\cF)) = c_{\beta_{\xi_0},\cF},$$
		where $\xi_0 \in  \beta_{\xi_0}$.
	\end{pr}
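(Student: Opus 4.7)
The plan is to reduce the statement to the standard untwisted microlocal index theorem of Kashiwara–Schapira, which asserts that for an ordinary constructible sheaf the Euler characteristic of the microstalk at a smooth point of the singular support equals the multiplicity of the corresponding Lagrangian component in the characteristic cycle (cf.\ \cite[Chapter~9]{KS}). Both the microstalk and the characteristic cycle coefficient are local invariants near $p(\xi_0)$, so the statement is well-suited to a direct local trivialization of $\cG$.

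First I would choose an open neighborhood $j\colon U \hookrightarrow Y$ of $p(\xi_0)$ small enough that two conditions hold simultaneously: (i) $j^!\cG$ is trivial, which is possible since analytic gerbes are locally trivial; and (ii) the only irreducible component $\beta$ of $\Lambda$ meeting $T^*U$ in a neighborhood of $\xi_0$ and containing $\xi_0$ in its closure is $\beta_{\xi_0}$ itself. Condition (i) is used exactly as in the proof of Proposition~\ref{microstalk independence}: the identification $\phi^{\cG}_{F, p(\xi_0)} \simeq \phi^{j^!\cG}_{F|_U, p(\xi_0)} \circ j^!$, combined with the trivialization, identifies $m_{\xi_0}(\cF)$ with the untwisted microstalk of $j^!\cF \in \Shv_{\Lambda \cap T^*U}(U)^{\heartsuit,\mathrm{cnstr}}$. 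Condition (ii) ensures that $c_{\beta_{\xi_0}, \cF}$ is preserved under restriction to $U$, since the characteristic cycle is defined locally: under $j^!$ the coefficient of the component containing $\xi_0$ does not change.

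With both sides thus transported to the untwisted setting on $U$, the equality $\chi(m_{\xi_0}(j^!\cF)) = c_{\beta_{\xi_0}, j^!\cF}$ is the classical index formula for microlocal Euler characteristics (\cite[Chapter~9]{KS}); concretely, one tests with a function $F$ on $U$ such that $dF$ meets $\Lambda$ transversely at $\xi_0$ and computes $\chi$ of the vanishing cycles against the Morse-theoretic local contribution of $\beta_{\xi_0}$ at $\xi_0$, shifted by the Maslov index built into the definition of $m_{\xi_0}$.

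The only technical point to verify with care is the compatibility of singular support and characteristic cycles with the chosen trivialization of $\cG$ over $U$; but since the trivialization induces an equivalence $\Shv_{j^!\cG}(U) \simeq \Shv(U)$ of t-categories that is compatible with the six functors, and since $\SingS$ and $\CC$ are defined via filtered presentations that commute with such equivalences, this compatibility is formal. I do not anticipate any genuine obstacle beyond this bookkeeping; the content of the proposition is entirely in the untwisted Kashiwara–Schapira theorem, and the role of $\cG$ is purely notational after one passes to a local trivialization.
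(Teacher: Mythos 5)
Your proposal is correct and follows essentially the same route as the paper's proof: localize to a neighborhood $U$ of $p(\xi_0)$ over which the gerbe trivializes, use the independence statement from Proposition~\ref{microstalk independence} to identify $m_{\xi_0}(\cF)$ with the untwisted microstalk of $j^!\cF$, observe that the characteristic-cycle coefficient at $\beta_{\xi_0}$ is a local invariant and hence unchanged under restriction, and then invoke Kashiwara--Schapira. The paper is slightly more explicit in the final step, using the microlocal normal form $j^{-1}\cF \cong V_{U_\alpha}$ in the localized category $\Shv_{\Lambda}(U;\xi_0)$ via \cite[Proposition~6.6.1(ii), (9.4.9)]{KS} together with vanishing of $m_{\xi_0}$ on $\Shv_{T^*U \setminus \{\xi_0\}}(U)$, rather than citing a ready-made "index formula," but this is a difference in bookkeeping only, not in substance.
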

	
	\begin{proof}
		Let $j:U \hookrightarrow Y$ be an open neighborhood of $p(\xi_0)$ such that $j^!(\cG)$ is trivial. As we saw in Proposition \ref{microstalk independence} we have 
		$$m_{\xi_0}(\cF) \cong m_{\xi_0}(j^!\cF).$$
		On the other hand, $$ c_{\beta_{\xi_0},\cF} \cong  c_{\beta_{\xi_0},j^!\cF},$$
		so we reduced the statement to the untwisted situation. In that case, by \cite[(9.4.9)]{KS} and \cite[Proposition 6.6.1(ii)]{KS} we get that 
		$$j^{-1}\cF \cong V_{U_{\alpha}}$$
		in $$\Shv_{\Lambda}(U; \xi_0):= \Shv_{\Lambda}(U)/\Shv_{T^*U \setminus \{\xi_0\}}(U)$$ for some analytic submanifold $U_{\alpha}$ of $U$. Since $m_{\xi_0}$ vanishes on $\Shv_{T^*U \setminus \{\xi_0\}}(U)$ we get the result.
	\end{proof}

\section{Whittaker-temperedness.}

\subsection{Local formalism.}
\begin{ntn}
	Let $$\Sph_{\kappa, x}:=\D_{\kappa} (\Gra_G)^{L^+G}$$ be the twisted spherical Hecke category, viewed as a monoidal category via convolution.
\end{ntn}
 Recall that for any $\mathbf{D} \in \DGCat$ with an action of $\D_{\kappa}(LG)$ we can consider
\begin{equation}\label{av}
\Av_!^{\chi}: \mathbf{D}^{L^+G} \xrightarrow{\oblv} \mathbf{D}^{L^+G \cap LN} \xrightarrow{\Av_!^{\chi}} \Whit_{\kappa}(\mathbf{D}).
\end{equation}
Here $$\Av_!^{\chi}: \mathbf{D}^{L^+G \cap LN} \rightarrow \Whit_{\kappa}(\mathbf{D})$$ is left adjoint to the forgetful functor $\Whit_{\kappa}(\mathbf{D}) \rightarrow \mathbf{D}^{L^+G \cap LN}$, which exists by \cite[Theorem 2.3.1]{R}.
\begin{ntn}
	Let $\mathbf{D}^{L^+G, \atemp} := \Ker\Av_!^{\chi} \hookrightarrow \mathbf{D}^{L^+G}$.
\end{ntn}

\begin{ntn}
	Let $I \subseteq G(O)$ (resp. $I^-$) denote the Iwahori subgroup corresponding to $B \subseteq G $ (resp. $B^{-} \subseteq G$).
Let $\stackrel{\circ}{I}$ denote the prounipotent radical of $I$. Let $\stackrel{\circ}{I}_1 := \Ad_{-\check{\rho}(t)}(\stackrel{\circ}{I})$ and let $I^-_1 := \Ad_{-\check{\rho}(t)}(I^-)$. Finally,
we let $\cK_1$ denote $\Ad_{-\check{\rho}(t)}$ applied to the first congruence subgroup of $G(O)$.
\end{ntn}
Note that $\stackrel{\circ}{I}_1 /\cK_1 = N$ and $I^{-1} /\cK_1 = B^-$.

\begin{lm}\label{leftadj}
	The inclusion  $\mathbf{D}^{L^+G, \atemp} \hookrightarrow \mathbf{D}^{L^+G}$ admits a left adjoint. 
\end{lm}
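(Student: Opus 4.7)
The plan is to apply the adjoint functor theorem for presentable DG categories. Since $\mathbf{D}^{L^+G, \atemp} = \Ker \Av_!^{\chi}$ is the fiber of a continuous functor between presentable categories, it is itself presentable and closed under colimits in $\mathbf{D}^{L^+G}$. Therefore it suffices to show that $\mathbf{D}^{L^+G, \atemp}$ is also closed under limits in $\mathbf{D}^{L^+G}$, which reduces to proving that the functor $\Av_!^{\chi}: \mathbf{D}^{L^+G} \to \Whit_{\kappa}(\mathbf{D})$ preserves limits (i.e. is itself a right adjoint).

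To prove limit preservation, I would decompose $\Av_!^{\chi}$ as the composition $\Av_!^{\chi, LN} \circ \oblv$, where $\oblv: \mathbf{D}^{L^+G} \to \mathbf{D}^{L^+G \cap LN}$ is the fully faithful forgetful inclusion of $L^+G$-equivariant objects. This forgetful preserves both limits and colimits, as it admits a left adjoint given by $!$-averaging along $L^+G/(L^+G \cap LN)$ and a right adjoint given by $*$-averaging. For the second factor, the key input is a stabilization statement in the spirit of \cite[Lemma 9.3.4]{GLCII}, valid on the essential image of $\oblv$: for any $L^+G$-equivariant $X$ and all sufficiently large $\alpha$, the value $\Av_!^{\chi, LN}(\oblv X)$ agrees, up to a cohomological shift depending on $\alpha$, with $\Av_*^{N^{\alpha}, \chi}(\oblv X)$. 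For any fixed such $\alpha$, the functor $\Av_*^{N^{\alpha}, \chi}$ is a right adjoint to the forgetful (since $N^{\alpha}$ is a finite-dimensional unipotent group) and hence preserves limits, which combined with the limit preservation of $\oblv$ yields the desired property of $\Av_!^{\chi}$.

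The main obstacle I anticipate is extending the stabilization statement from objects of the form $\pi^!(\cF)$, as treated in \cite[Lemma 9.3.4]{GLCII}, to arbitrary $L^+G$-equivariant objects. I expect this to go through by essentially the same geometric argument, using non-degeneracy of $\chi$ together with the rigidity provided by $L^+G$-equivariance to produce a uniform bound $\alpha_0$ beyond which further averaging has no effect, but a careful implementation requires unwinding the definitions in the twisted ind-scheme setting. Once this uniform stabilization is in hand, the rest of the argument is formal.
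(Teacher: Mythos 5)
Your high-level reduction is the same as the paper's: reduce to showing $\Av_!^{\chi}:\mathbf{D}^{L^+G}\to\Whit_\kappa(\mathbf{D})$ preserves limits. After that the routes diverge, and yours has a genuine gap. You factor $\Av_!^{\chi}$ as $\Av_!^{\chi,LN}\circ\oblv$ and propose to handle the second factor by a stabilization statement in the spirit of \cite[Lemma 9.3.4]{GLCII}: that on the essential image of $\oblv$, the functor $\Av_!^{\chi,LN}$ agrees (up to shift) with a \emph{fixed} finite-rank $\Av_*^{N^{\alpha_0},\chi}$. That uniform stabilization --- the thing you yourself flag as ``the main obstacle'' --- is precisely what is not available. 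The cited lemma only concerns objects of the form $\pi^!(\cF)$, and a pointwise stabilization (each $X$ stabilizes at some $\alpha_0(X)$) does not yield limit preservation without uniformity. You would essentially be re-deriving a substantial portion of \cite[Theorem 2.3.1]{R}, and nothing you write shows how to do that. (As a minor point, your claim that $\oblv:\mathbf{D}^{L^+G}\to\mathbf{D}^{L^+G\cap LN}$ has a left adjoint ``given by $!$-averaging along $L^+G/(L^+G\cap LN)$'' is unjustified: that quotient has $G/N$ as its reduced fiber, which is affine rather than proper, so $\Av_!$ is not automatically defined. Fortunately, $\oblv$ preserves limits for the trivial reason that both invariant categories compute limits in the underlying $\mathbf{D}$, so this does not affect your conclusion, only your justification.)

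The paper circumvents the stabilization issue entirely by a different and more structured factorization. It passes through the Iwahori subgroup $I^-$ (using properness of $L^+G/I$ to get the left adjoint $\Av_!$ making $\oblv:\mathbf{D}^{L^+G}\to\mathbf{D}^{I^-}$ preserve limits), then applies the equivalence $\Av_*:\mathbf{D}^{I^-}\xrightarrow{\sim}\mathbf{D}^{I^-_1,\mu_\kappa}=(\mathbf{D}^{\cK_1})^{B^-,\mu_\kappa}$, and reduces the Whittaker averaging to a \emph{finite-dimensional} $\Av_!^{\chi}:(\mathbf{D}^{\cK_1})^{B^-,\mu_\kappa}\to(\mathbf{D}^{\cK_1})^{N,\chi}$, which agrees with $\Av_*^{\chi}$ up to shift by \cite[3.2.3]{FR} and hence preserves limits. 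The final passage from ``baby Whittaker'' $\Whit_\kappa^{\leq 1}(\mathbf{D})$ to the full Whittaker category is $\iota_{1,\infty,!}$, which preserves limits by \cite[Theorem 2.3.1 (1)]{R}. Each step is finite-dimensional or already established; nothing hinges on a uniform stabilization over all $L^+G$-equivariant objects. If you want to salvage your approach, the honest route is to cite \cite[Theorem 2.3.1]{R} directly for the assertion that the left adjoint $\Av_!^{\chi,LN}$ preserves limits; otherwise, switch to the Iwahori/baby-Whittaker factorization.
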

\begin{proof}
	It suffices to show that (\ref{av}) commutes with limits. We have the following commutative diagram:
	\begin{equation}\label{babywhit}
	\begin{tikzcd}
	{\mathbf{D}}^{L^+G}\arrow[drr, "\Av_!^{\chi}" ']\ar[r, rightarrow, "\oblv"' ']&   	{\mathbf{D}}^{I^-} \ar[r, rightarrow, "\Av_*"' ']& {\mathbf{D}}^{I_1^-, \mu_{\kappa}} \arrow[r, equal]& ({\mathbf{D}}^{\cK_1} )^{B^-, \mu_{\kappa}} \ar[r, rightarrow, "\Av_!^{\chi}"' '] &  ({\mathbf{D}}^{\cK_1} )^{N, \chi} \arrow[r, equal]&  \Whit_{\kappa}^{\leq 1}(\mathbf{D}) \arrow[dlll, "\iota_{1, \infty, !}"]\\
	&   & \Whit_{\kappa}(\mathbf{D}).& & & 
	\end{tikzcd}
	\end{equation}
	Here $\mu_{\kappa}$ is the character D-module on $I_1^-$ coming from $$\kappa(\check{\rho}, -): \Lie(I_1^-) \rightarrow \mathfrak{t} \xrightarrow{\kappa(\check{\rho}, -)} k.$$ 
	Note that $\kappa(\check{\rho}, -)$ is trivial on $\cK_1$, hence it descends to a character on $\mathfrak{b}^-$. 
	
	Commutativity of (\ref{babywhit}) follows from the fact that the functor $$\Av_*: {\mathbf{D}}^{I^-} \rightarrow {\mathbf{D}}^{I_1^-, \mu_{\kappa}}$$ is an equivalence, with the inverse the $!$-averaging functor.
	
	Note that $\oblv$ commutes with limits. Indeed, its left adjoint $\Av_!$ is well-defined due to the fact that $L^+G/I$ is proper.   By \cite[3.2.3]{FR}, the functor $\Av_!^{\chi}$ coincides with $\Av_*^{\chi}$ up to a shift, thus commutes with limits. Finally, $\iota_{1, \infty, !}$ commutes with limits by \cite[Theorem 2.3.1 (1)]{R}.
\end{proof}

\begin{df}
	Define $\mathbf{D}^{L^+G, \temp} $ to be the quotient of  $\mathbf{D}^{L^+G}$ by $\mathbf{D}^{L^+G, \atemp} $.
\end{df}

\begin{rem}
	By Lemma \ref{leftadj} we thus have a recollement
	\begin{equation}
	\begin{tikzcd}
	\mathbf{D}^{L^+G, \atemp} \ar[r, hookrightarrow,  shift right, ""]
	& \arrow[l,  shift right, ""]  \mathbf{D} \arrow[r,  shift right, ""]
	& \ar[l, hookrightarrow,  shift right, ""]\mathbf{D}^{L^+G, \temp} .
	\end{tikzcd}
	\end{equation}
	
\end{rem}

\begin{ex}
	$\mathbf{D} = \D_{\kappa}(\Gra_G)$.
\end{ex}

\begin{df}
	For a category $\mathbf{C}$ with an action of $\Sph_{\kappa, x}$ define $$\mathbf{C}^{\temp} := \mathbf{C} \otimes_{\Sph_{\kappa, x}} \Sph_{\kappa, x}^{\temp} \subseteq \mathbf{C},$$
	$$\mathbf{C}^{\atemp} := \mathbf{C} \otimes_{\Sph_{\kappa, x}} \Sph_{\kappa, x}^{\atemp} \subseteq \mathbf{C}.$$
\end{df}

\begin{rem}
	A priori, the notion of Whittaker-temperedness depends on the choice of $x \in X$, but it is expected to be independent of such choice. 
\end{rem}

\subsection{Irregular singular support in finite dimensions and Whittaker averaging.}

Parallel to \cite[3.1]{FR} we set the following notation.

\begin{df}
	For an algebraic stack $\cY$ locally of finite type, equipped with a $G$-action and a $G$-equivariant twisting $\tau$, define $$\D_{\tau, G-\irreg}(\cY) := \D_{\tau, \mu^{-1}(\mathfrak{g}_{\irreg})}(\cY),$$
	where $\mu: T^*\cY \rightarrow \mathfrak{g}^* \cong \mathfrak{g}$ is the moment map. 
	Set $$\Shv_{\tau, G-\irreg}(\cY):= \Shv_{\tau}(\cY) \cap \D_{\tau, G-\irreg}(\cY).$$
\end{df}

\begin{pr}\label{G-irreg zero}
	For any character $D$-module $\mu$ on $B^-$ the finite Whittaker averaging functor 
	$$\Shv_{\tau, G-\irreg}(\cY)^{B^-, \mu} \rightarrow \D_{\tau}(\cY)^{B^-, \mu} \xrightarrow{\Av_!^{\chi}} \D_{\tau}(\cY)^{N, \chi}$$
	is zero.
\end{pr}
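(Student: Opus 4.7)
The plan is to argue via a microlocal vanishing using the twisted singular-support formalism from the previous subsections, paralleling \cite[3.1]{FR}. The strategy is to exhibit two a priori constraints on the moment-map image $\mu(\SingS(\Av_!^{\chi}(\cF))) \subseteq \mathfrak{g}^{*}$ whose intersection is empty, forcing $\Av_!^{\chi}(\cF) = 0$; here $\mu: T^{*}\cY \to \mathfrak{g}^{*}$ is the moment map for the $G$-action.

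The first constraint comes from the $(N,\chi)$-equivariance of the output. For any $\cG \in \D_{\tau}(\cY)^{N,\chi}$, the infinitesimal form of equivariance says that every $\xi \in \mathfrak{n}$ acts on $\cG$ by the scalar $d\chi(\xi)$; at the level of singular support this forces the $\mathfrak{n}^{*}$-component of $\mu$ to equal $d\chi$, giving $\mu(\SingS(\cG)) \subseteq d\chi + \mathfrak{n}^{\perp}$. Applied to $\cG = \Av_!^{\chi}(\cF)$ and combined with Kostant's theorem --- under a Killing-form identification $\mathfrak{g}^{*} \cong \mathfrak{g}$ the affine subspace $d\chi + \mathfrak{n}^{\perp}$ corresponds to $f + \mathfrak{b}$ for a principal nilpotent $f \in \mathfrak{n}^{-}$, and is entirely contained in $\mathfrak{g}^{\reg}$ --- one obtains $\mu(\SingS(\Av_!^{\chi}(\cF))) \subseteq \mathfrak{g}^{\reg}$.

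The second constraint comes from the $G$-irregularity of $\cF$. Realizing $\Av_!^{\chi}$ as pull-push along the action correspondence $\cY \xleftarrow{p_1} N \times \cY \xrightarrow{\on{act}} \cY$ tensored with $\chi^{*}(\exp)$, a standard singular-support estimate, together with $G$-equivariance of $\mu$ for the coadjoint action, places $\mu(\SingS(\Av_!^{\chi}(\cF)))$ inside the $N$-coadjoint saturation of $\mu(\SingS(\cF))$. Since $\mathfrak{g}_{\irreg}$ is $G$-invariant and hence $N$-invariant, the hypothesis $\cF \in \Shv_{\tau, G-\irreg}(\cY)$ gives $\mu(\SingS(\Av_!^{\chi}(\cF))) \subseteq \mathfrak{g}_{\irreg}$.

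Combining the two constraints, $\mu(\SingS(\Av_!^{\chi}(\cF))) \subseteq \mathfrak{g}^{\reg} \cap \mathfrak{g}_{\irreg} = \emptyset$, so $\SingS(\Av_!^{\chi}(\cF))$ is empty and therefore $\Av_!^{\chi}(\cF) = 0$. The main technical obstacle is making the singular-support estimate in the second step precise: one must carefully track the shift introduced by $\chi^{*}(\exp)$ in the pushforward, and use the $(B^{-}, \mu)$-equivariance of the input to produce tractable cleanness-type bounds (cf.\ \cite[3.2.3]{FR}) before combining them with the $G$-invariance of $\mathfrak{g}_{\irreg}$.
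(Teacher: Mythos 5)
The paper proves this by reduction, not by a direct microlocal argument: it first reduces to the case $\cY = G$ using pullback along the smooth $G$-equivariant map $\on{act}: G \times \cY \to \cY$ and evaluation of the compact holonomic object $\Av_!^{\chi}(\cF)$ at closed points $(g,y)$ (using $G$-equivariance of $\id \times i_y$ and \cite[Proposition 3.3.4.2]{FR} to see the restricted object is still $G$-irregular); once $\cY = G$ it descends to $B^-\backslash G$, where the singular support sits inside $\widetilde{\cN} \times_{\cN} \cN^{\irr}$, and then the statement becomes exactly \cite[Theorem 3.2.4.1]{FR}, quoted as a black box. So the real finite-dimensional content is outsourced to FR rather than re-proved.

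Your first constraint is incorrect for the conic singular support used in this paper, and that breaks the argument. Strong $(N,\chi)$-equivariance says $\xi \in \mathfrak{n}$ acts on $\cG$ by the scalar $d\chi(\xi)$, but passing to associated graded kills the scalar: the symbol of $\xi - d\chi(\xi)$ equals the symbol of $\xi$, so what you actually get is $\mu_N(\SingS(\cG)) = \{0\}$, i.e.\ $\mu(\SingS(\cG)) \subseteq \mathfrak{n}^{\perp}$. Under the Killing-form identification this is $\mathfrak{b}$, \emph{not} $f + \mathfrak{b}$ (a sanity check: $\chi^!(\exp)$ on $N$ is an integrable connection, so its conic $\SingS$ is the zero section, with $\mu_N$-image $\{0\}$, not $\{d\chi\}$). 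Since $\mathfrak{b} \cap \mathfrak{g}_{\irreg}$ is nonempty (it contains $0$), the two constraints no longer have empty intersection and no vanishing follows. Your instinct that $d\chi$ must enter is right, but in the conic formalism it has to enter as a \emph{shifted} Lagrangian condition tested against $\SingS$ of the \emph{input} $\cF$ --- essentially the twisted microstalk / shifted-conormal machinery the paper develops globally in Section \ref{NT} --- not as a constraint on the conic $\SingS$ of the output. Separately, your second constraint uses a singular-support estimate for $\on{act}_!$ with $\on{act}: N \times \cY \to \cY$ non-proper, which you flag as the main obstacle but do not resolve; the paper avoids this entirely by the reduction to \cite{FR}.
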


Up to some minor changes, our proof will follow the one given in \cite[3.3]{FR}. First, we note that it suffices to prove the assertion for regular holonomic $\cF \in \Shv_{\tau, G-\irreg}(\cY)^{B^-, \mu, \heartsuit}$.
\subsubsection{Reduction to the case $\cY = G \times \cY_0$.} Consider $G \times \cY$  as acted on by $G$ via the action on the first factor. We have the $G$-equivariant map:
$$G \times \cY \xrightarrow{\operatorname{act}} \cY.$$
Since $\operatorname{act}$ is smooth, $\operatorname{act}^!$ is t-exact, conservative, and maps $\Shv_{\tau, G-\irreg}(\cY)^{B^-, \mu}$ to $$\Shv_{\tau, G-\irreg}(G \times \cY)^{B^-, \mu}.$$ Since $\operatorname{act}$ is $G$-equivariant, $\operatorname{act}^!$ commutes with finite Whittaker averaging functor. 

\subsubsection{Reduction to the case $\cY = G $.} Since $\Av_!^{\chi}(\cF)$ is a compact, holonomic object it suffices to check that for every pair $(g, y) \in (G \times \cY_0)(k)$ we have 
$$i_{(g, y)}^!(\Av_!^{\chi}(\cF)) =0.$$
Note that the map $$ \id\times i_y: G \rightarrow G \times \cY_0$$
is $G$-equivariant, therefore
$$(\id\times i_y)^!\Av_!^{\chi} (\cF) \cong \Av_!^{\chi}(\id\times i_y)^!(\cF) \in \D(G)^{N, \chi}.$$
Finally, the fact that $(\id\times i_y)^!(\cF) $ lies in $\Shv_{ G-\irreg}(G)^{B^-, \mu}$ follows from \cite[Proposition 3.3.4.2]{FR}.

\subsubsection{Proof for $\cY = G$.} We are then in the setting $\cF \in \D( G)^{B^-, \mu }$ with $$\SingS(\cF) \subseteq \widetilde{\cN} \times_{\cN} \cN_{\irreg} \subset \widetilde{\cN} \cong T^*(B^- \backslash G),$$ where $\widetilde{\cN}$ is the Springer resolution of the nilpotent cone.  We want to show that 
$$\Av_!^{\chi}(\cF) =0.$$
However, this follows from \cite[Theorem 3.2.4.1]{FR}.

\subsection{Whittaker-temperedness and nilpotent singular support.}\label{temp and nilp}

Recall from \cite[4.4]{FR} the notion of an (irregular) ramified Higgs field. 
\subsubsection{Ramified Higgs fields.} Let $P \subset G(K)$ be a compact open subgroup. Let $(\cP_G, \tau) \in \Bun_G^{P-\lvl}$ be a point. Recall that $T^*_{(\cP_G, \tau)}\Bun_G^{P-\lvl}$ classifies Higgs bundles $\varphi \in \Gamma(X \setminus x, \mathfrak{g}_{\cP_G} \otimes \Omega_X^1)$ with the condition
$$(\cP_G, \tau, \varphi_{D_x^{\circ}}) \in \Lie(P)^{\perp}/P \subseteq \mathfrak{g}((t))dt / P = \mathfrak{g}((t))^{\vee} / P .$$

We say that such a ramified Higgs bundle is \emph{irregular }if the Higgs bundle $(\cP_{G, X \setminus x}, \varphi)$ on $X \setminus x$ is irregular. Note that irregular ramified Higgs bundles form a closed conical substack of $T^*\Bun_G^{P-\lvl}$.
\begin{ntn}
	We denote by $\D_{\kappa, \irreg}(\Bun_G^{P -\lvl})$ and $\Shv_{\kappa, \irreg}(\Bun_G^{P -\lvl})$ the categories of twisted D-modules and sheaves with irregular singular support.
\end{ntn}

\subsubsection{} Note that $\kappa$ trivializes canonically over $L^+G$ and over $\cK_1$. Moreover, the two trivializations coincide on $L^+G \cap \cK_1$.  Therefore it makes sense to formulate the following statement:
\begin{pr}\label{irregular convolution}
	The natural convolution action induces a functor 
	$$\D_{\kappa}(\cK_1 \backslash LG / L^+G) \otimes \D_{\kappa, \irreg}(\Bun_G^{I^- - \lvl}) \rightarrow \D_{\kappa, \irreg}(\Bun_G^{\cK_1 - \lvl}).$$
\end{pr}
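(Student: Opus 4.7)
The plan is to analyze the convolution geometrically via a Hecke-type correspondence and track the singular support estimate through it, reducing the question to the fact that Hecke modifications at $x$ do not affect the Higgs field on $X\setminus x$.

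First I would realize the convolution as a pull-push along a correspondence
$$
\Bun_G^{\cK_1-\lvl}\;\xleftarrow{\overleftarrow{h}}\;\cH\;\xrightarrow{\overrightarrow{h}}\;\Bun_G^{I^--\lvl},\qquad \cH\xrightarrow{q}\cK_1\backslash LG/L^+G,
$$
where an $S$-point of $\cH$ consists of objects of $\Bun_G^{\cK_1-\lvl}(S)$ and $\Bun_G^{I^--\lvl}(S)$ together with an identification of the underlying $G$-bundles on $(X\setminus x)\times S$ compatibly with the level data. The convolution functor then takes the form $\overleftarrow{h}_{*}(\overrightarrow{h}^{!}(-)\otimes^{!}q^{!}(-))$, where the tensor is formed using the fact that the canonical trivializations of $\cG_\kappa$ over $L^+G$ and $\cK_1$ agree on their intersection, as stipulated just before the statement.

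Next I would reduce to finite-type Hecke correspondences. The category $\D_{\kappa}(\cK_1\backslash LG/L^+G)$ is generated under colimits by objects supported on closures $\overline{\Gra_G^{\le\mu}}$ of $L^+G$-orbits, and these give rise to an exhaustion of $\cH$ by substacks $\cH^{\le\mu}$ for which $\overleftarrow{h}|_{\cH^{\le\mu}}$ is ind-proper representable and $\overrightarrow{h}|_{\cH^{\le\mu}}$ is a smooth fibration of finite relative dimension. It therefore suffices to prove preservation of irregular singular support for the bounded convolution functor associated to each $\mu$.

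Then I would track the singular support through the bounded correspondence. By the pullback and proper pushforward estimates on $\SingS$ (Section 2.3 in the D-module setting, or their Betti counterparts in Section 2.4 via Riemann--Hilbert), applied at finite level, one has
$$
\SingS\!\bigl(\overleftarrow{h}_{*}\overrightarrow{h}^{!}(\cF)\bigr)\;\subseteq\;d\overleftarrow{h}\circ(d\overrightarrow{h})^{-1}\SingS(\cF).
$$
The crucial geometric input is the identification of cotangent fibers at a point of $\cH^{\le\mu}$: this cotangent parametrizes Higgs fields $\varphi\in\Gamma(X\setminus x,\mathfrak{g}_{\cP}\otimes\Omega^{1}_{X})$ satisfying compatible conditions at $x$ on both sides via the Hecke isomorphism. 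Both $d\overleftarrow{h}$ and $d\overrightarrow{h}$ act as the identity on $\varphi|_{X\setminus x}$, and since irregularity of a ramified Higgs field is by definition a condition on $\varphi|_{X\setminus x}$, it is transported unchanged across the correspondence. This immediately yields the required containment in the irregular locus.

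The main obstacle I expect is the rigorous handling of singular support on the ind-pro-stacks $\Bun_G^{P-\lvl}$ and $\cH^{\le\mu}$: one must present these as limits of smooth finite-type quotients, verify the proper/smooth base change underlying the pull-push $\SingS$ inequalities in the twisted setting, and check the compatibility of these estimates with the twistings $\cG_\kappa$ and with the two trivializations appearing in the definition of convolution. Once this bookkeeping is in place, the moduli description of $\cH$ reduces the proof to the preservation-of-Higgs-field observation above.
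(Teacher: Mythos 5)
The paper does not include a proof of this proposition: it is stated and then immediately used, and the argument is implicitly deferred to the analogous untwisted statement in Faergeman--Raskin \cite[Section~4.4]{FR} (the paragraph just above, recalling the notion of ramified Higgs field, cites that section). Your proposal reconstructs essentially that argument --- realize convolution as pull-push along the Hecke correspondence at $x$, reduce to a truncation $\cH^{\le\mu}$ on which the maps become ind-proper and smooth respectively, propagate the singular-support estimate through, and observe that the correspondence acts as the identity on the Higgs field over $X\setminus x$ while irregularity is a condition purely on $\varphi|_{X\setminus x}$ --- so you have supplied the proof the paper leaves implicit, and by the right route.

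One technical point is stated imprecisely: the displayed inequality
$$
\SingS\bigl(\overleftarrow{h}_{*}\overrightarrow{h}^{!}(\cF)\bigr)\subseteq d\overleftarrow{h}\circ(d\overrightarrow{h})^{-1}\SingS(\cF)
$$
omits the contribution of the kernel. The convolution you wrote down is $\overleftarrow{h}_{*}\bigl(\overrightarrow{h}^{!}(\cF)\otimes^{!}q^{!}(\cK)\bigr)$, and since $\cK\in\D_{\kappa}(\cK_1\backslash LG/L^+G)$ is arbitrary, the $!$-tensor with $q^{!}(\cK)$ enlarges $\SingS$ by Minkowski sum with $\SingS(q^{!}\cK)$. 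The estimate you need is therefore of the form
$$
\SingS\bigl(\overleftarrow{h}_{*}(\overrightarrow{h}^{!}\cF\otimes^{!}q^{!}\cK)\bigr)\subseteq d\overleftarrow{h}\Bigl((d\overrightarrow{h})^{-1}\SingS(\cF)+(dq)^{-1}\SingS(\cK)\Bigr).
$$
This does not change the conclusion --- the image of $(dq)^{-1}\SingS(\cK)$ lies entirely in cotangent directions supported at $x$, so it cannot spoil irregularity over $X\setminus x$ --- but the displayed bound as written is not the one your subsequent sentence actually invokes, and you should record the extra term explicitly since the kernel category is not the trivial one. With that correction, and modulo the presentation/bookkeeping issues you already flag (ind-pro structure of $\Bun_G^{P\text{-}\lvl}$, compatibility of the twisted $\SingS$ estimates with $\cG_\kappa$), the argument is sound.
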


\begin{pr}\label{temperedSS}
	Every object of $$\Shv_{\kappa, \Nilp^{\irreg}}(\Bun_G) \subset \Shv_{\kappa, \Nilp}(\Bun_G)$$ is Whittaker-anti-tempered.
\end{pr}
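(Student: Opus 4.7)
The plan is to show directly that $\Av_!^{\chi}(\cF) = 0$ for every $\cF \in \Shv_{\kappa, \Nilp^{\irreg}}(\Bun_G)$, by running through the factorization of the Whittaker averaging functor recorded in the diagram (\ref{babywhit}) inside the proof of Lemma \ref{leftadj}. That diagram presents $\Av_!^{\chi}$, on an $L^+G$-equivariant object, as the composite of $\oblv$ to $\mathbf{D}^{I^-}$, the equivalence $\Av_*: \mathbf{D}^{I^-} \xrightarrow{\sim} \mathbf{D}^{I^-_1, \mu_\kappa}$, the finite Whittaker averaging $\Av_!^{\chi}: \mathbf{D}^{B^-, \mu_\kappa} \to \mathbf{D}^{N, \chi}$ at the $\cK_1$-level, and finally $\iota_{1,\infty,!}$ into $\Whit_\kappa$. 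So it suffices to show that the finite averaging step already vanishes on the image of $\cF$.

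First I would track the singular support of $\cF$ through the $\oblv$ and $\Av_*$ steps. The projection $\Bun_G^{I^--\lvl} \to \Bun_G$ is smooth, and the pullback of any object of $\Shv_{\kappa, \Nilp^{\irreg}}(\Bun_G)$ lands in $\Shv_{\kappa, \irreg}(\Bun_G^{I^--\lvl})$: a Higgs field on $X$ that is pointwise irregular is, in particular, irregular after restricting to $X \setminus x$. The $\Av_*$ step, realized as convolution with a Hecke kernel in $\D_\kappa(\cK_1 \backslash LG / L^+G)$, then preserves irregular singular support by Proposition \ref{irregular convolution}, so after it our object lies in $\Shv_{\kappa, \irreg}(\Bun_G^{\cK_1-\lvl})^{B^-, \mu_\kappa}$.

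Next I would reduce the vanishing of the finite Whittaker averaging to Proposition \ref{G-irreg zero}. Since $L^+G/\cK_1 \cong G$, the stack $\Bun_G^{\cK_1-\lvl}$ carries a residual $G$-action, whose moment map $\mu: T^*\Bun_G^{\cK_1-\lvl} \to \mathfrak{g}^*$ extracts the leading term at $x$ of a ramified Higgs field. Since objects of $\Nilp^{\irreg}$ are Higgs fields pointwise valued in $\cN^{\irreg}$, the leading term at $x$ lies in $\cN^{\irreg} \subset \mathfrak{g}_{\irreg}$, so our sheaf lies in $\Shv_{\kappa, G-\irreg}(\Bun_G^{\cK_1-\lvl})^{B^-, \mu_\kappa}$. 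Proposition \ref{G-irreg zero}, applied with this $G$-action and the character $\mu_\kappa$, then yields that the finite averaging step kills it, and composing with $\iota_{1,\infty,!}$ gives $\Av_!^{\chi}(\cF) = 0$, placing $\cF$ in $\mathbf{D}^{L^+G, \atemp}$.

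The main obstacle I anticipate is correctly propagating the irregular singular support condition through $\Av_*$, which both enlarges the level structure from $I^-$ to $\cK_1$ and introduces the character twist by $\mu_\kappa$; Proposition \ref{irregular convolution} is precisely designed to control this move, so the task amounts to identifying $\Av_*$ with the appropriate convolution. A secondary technical point is that $\Bun_G^{\cK_1-\lvl}$ is only locally of finite type while Proposition \ref{G-irreg zero} is stated for finite-type stacks; one handles this by exhausting $\Bun_G^{\cK_1-\lvl}$ by $G$-stable quasi-compact opens and arguing in the colimit, since both $!$-averaging and singular support are compatible with such a presentation.
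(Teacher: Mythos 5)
Your proposal reproduces the paper's own argument: both factor $\Av_!^{\chi}$ through the diagram (\ref{babywhit}), push the irregular singular support condition through $\Av_*$ to $\Bun_G^{\cK_1-\lvl}$ via Proposition \ref{irregular convolution}, identify the result as lying in $\Shv_{\kappa, G\text{-}\irreg}$ (the paper cites \cite[4.5.2]{FR} where you supply the moment-map/leading-term heuristic), and close with Proposition \ref{G-irreg zero}. Your added remark about exhausting $\Bun_G^{\cK_1-\lvl}$ by $G$-stable quasi-compact opens is a correct technical footnote the paper leaves implicit, but it does not change the substance of the argument.
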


\begin{proof}
	Chasing (\ref{babywhit}) we obtain that it suffices to show that the composition 
	\begin{equation}\label{G-irreg}
	\begin{split}
	\Shv_{\kappa, \Nilp^{\irreg}}(\Bun_G) \subset \D_{\kappa}(\Bun_G) \xrightarrow{\Av_*} \D_{\kappa}(\Bun_G^{\lvl})^{I_1^-, \mu_{\kappa}}=\\ =\D_{\kappa}(\Bun_G^{\cK_1- \lvl})^{B^-, \mu_{\kappa}} \xrightarrow{\Av_!^{\chi}} D_{\kappa}(\Bun_G^{\cK_1 -\lvl})^{N, \chi}
	\end{split}
	\end{equation}
	is zero. 
	
	Note that the composition
	\begin{equation}
		\D_{\kappa}(\Bun_G) \rightarrow \D_{\kappa}(\Bun_G^{\lvl})^{I_1^-, \mu_{\kappa}}= \D_{\kappa}(\Bun_G^{\cK_1- \lvl})^{B^-, \mu_{\kappa}} \xrightarrow{\oblv} D_{\kappa}(\Bun_G^{\cK_1- \lvl})
	\end{equation}
	coincides with 
	$$	\D_{\kappa}(\Bun_G)  \xrightarrow{\Av_*} \D_{\kappa}(\Bun_G^{\cK_1- \lvl}),$$
	which by  Proposition \ref{irregular convolution} maps $\Shv_{\kappa, \Nilp^{\irreg}}(\Bun_G)$ to $\Shv_{\kappa, \irreg}(\Bun_G^{\cK_1 -\lvl})$.  As in \cite[4.5.2]{FR} we obtain that 
	$$\Shv_{\kappa, \irreg}(\Bun_G^{\cK_1 -\lvl}) \subseteq \Shv_{\kappa, G-\irreg}(\Bun_G^{\cK_1 -\lvl}).$$
	
	Hence we can rewrite the composition (\ref{G-irreg}) as 
	\begin{equation}
		\begin{split}
		\Shv_{\kappa, \Nilp^{\irreg}}(\Bun_G) \rightarrow \Shv_{\kappa, \irreg}(\Bun_G^{\cK_1 -\lvl})^{B^-, \mu_{\kappa}} \subseteq \\
		\subseteq \Shv_{\kappa, G-\irreg}(\Bun_G^{\cK_1 -\lvl})^{B^-, \mu_{\kappa}}\rightarrow \D_{\kappa}(\Bun_G^{\cK_1 -\lvl})^{N, \chi},
		\end{split}
	\end{equation}
	which vanishes by Proposition \ref{G-irreg zero}.
\end{proof}

\subsection{Whittaker-tempered category for nice levels.}\label{nice}
The goal of this section is to show that for ``most" rational levels $\kappa = (-\check{h} + \frac{p}{q})\kappa_b$ and for any DG category $\mathbf{C}$ with an action of $\Sph_{\kappa, x}$ one has 
$$\mathbf{C}^{\temp} = \mathbf{C}.$$

\begin{pr}\label{nice levels}
	Assume that the integral Weyl group $W_{\kappa(\check{\rho}, -)}$ is trivial. Then for any $\mathbf{C}$ with an action of $\Sph_{\kappa, x}$ one has 
	$$\mathbf{C}^{\temp} = \mathbf{C}.$$
\end{pr}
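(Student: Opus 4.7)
The plan is to reduce the claim to the vanishing of $\Sph_{\kappa,x}^{\atemp}$. Indeed, once one knows $\Sph_{\kappa,x}^{\atemp}=0$, the recollement
$\Sph_{\kappa,x}^{\atemp}\hookrightarrow\Sph_{\kappa,x}\twoheadrightarrow\Sph_{\kappa,x}^{\temp}$
collapses to $\Sph_{\kappa,x}^{\temp}=\Sph_{\kappa,x}$, so for every $\Sph_{\kappa,x}$-module $\mathbf{C}$ one has
$\mathbf{C}^{\temp}=\mathbf{C}\otimes_{\Sph_{\kappa,x}}\Sph_{\kappa,x}^{\temp}=\mathbf{C}\otimes_{\Sph_{\kappa,x}}\Sph_{\kappa,x}=\mathbf{C}$, as desired. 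By construction $\Sph_{\kappa,x}^{\atemp}=\Ker(\Av_{!}^{\chi}\colon\Sph_{\kappa,x}\to\Whit_{\kappa}(\Gra_{G}))$, so the entire problem is reduced to showing that the Whittaker-averaging functor $\Av_{!}^{\chi}$ is conservative on $\Sph_{\kappa,x}$.

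I would attack this via a block-plus-generator argument. The twisted spherical category $\Sph_{\kappa,x}$ decomposes under the action of its (twisted) Bernstein-type center into blocks indexed, for rational $\kappa$, by dot-orbits of the integral Weyl group $W_{\kappa(\check{\rho},-)}$ on the weight lattice. Under the hypothesis each such orbit is a singleton, so every block is controlled by a single weight and is generated by a unique simple $L^{+}G$-equivariant object --- concretely, the $\on{IC}$-sheaf of an orbit $\Gra_{G}^{\check{\lambda}}$ for some $\check{\lambda}\in\check{\Lambda}^{+}$. Consequently, conservativity of $\Av_{!}^{\chi}$ reduces to the non-vanishing of $\Av_{!}^{\chi}(\on{IC}_{\check{\lambda}})$ for every $\check{\lambda}\in\check{\Lambda}^{+}$.

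The final step is a semi-infinite computation. By base change, in the spirit of \cite{FGV}, the object $\Av_{!}^{\chi}(\on{IC}_{\check{\lambda}})$ is computed by twisted cochains on the intersection $\Gra_{G}^{\check{\lambda}}\cap S^{\check{\lambda}}$, where $S^{\check{\lambda}}=LN\cdot t^{\check{\lambda}}\cdot L^{+}G$, with coefficients in the restriction of $\cG_{\kappa}$ paired against $\chi^{!}(\exp)$. The intersection is a single point, so the complex is concentrated in one degree, and the triviality of $W_{\kappa(\check{\rho},-)}$ rules out any integrality resonance between the twisting and the character that could force cancellation; the output is then manifestly non-zero. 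The main obstacle is the middle step: justifying the block decomposition in the form needed here amounts to a Kazhdan--Lusztig / Kashiwara--Tanisaki style linkage principle transported from affine category $\mathcal{O}$ to $\Sph_{\kappa,x}$. Once that structural input is in hand, the cohomological check is routine and yields the conservativity of $\Av_{!}^{\chi}$, hence the proposition.
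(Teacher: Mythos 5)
Your first reduction is sound and matches the paper's opening move in spirit: since $\mathbf{C}^{\atemp}=\mathbf{C}\otimes_{\Sph_{\kappa,x}}\Sph_{\kappa,x}^{\atemp}$, the whole problem collapses to the vanishing of $\Sph_{\kappa,x}^{\atemp}$, i.e.\ to the conservativity of $\Av_!^{\chi}$ on categories of $L^+G$-invariants. From there, however, you take a genuinely different route from the paper, and the step you yourself flag as ``the main obstacle'' is indeed a real gap rather than routine. A linkage principle / block decomposition of the \emph{twisted spherical} category $\Sph_{\kappa,x}$, with each block equal to the subcategory generated by a single $\on{IC}_{\check\lambda}$ when $W_{\kappa(\check\rho,-)}$ is trivial, is not in the literature in the form you need, and transporting Kashiwara--Tanisaki from affine category $\mathcal{O}$ to $\Sph_{\kappa,x}$ would be a serious undertaking. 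Moreover, even granting that decomposition, passing from non-vanishing of $\Av_!^{\chi}(\on{IC}_{\check\lambda})$ to conservativity of $\Av_!^{\chi}$ on the whole block requires an additional argument (e.g.\ that the kernel is a $\Sph_{\kappa,x}$-submodule containing no compact generator), and your claim that the semi-infinite complex ``is concentrated in one degree'' because $\Gra_G^{\check\lambda}\cap S^{\check\lambda}$ is a point is only the leading term of a stratified computation, not the whole answer.

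The paper sidesteps all of this by never working at the affine level. Using the commutative diagram (\ref{babywhit}) established in the proof of Lemma \ref{leftadj}, it replaces $\Av_!^{\chi}$ on $\mathbf{D}^{L^+G\cap LN}$ by a \emph{finite-dimensional} Whittaker averaging $\mathbf{D}^{B^-,\mu_\kappa}\to\mathbf{D}^{N,\chi}$; then it observes that conservativity for arbitrary $\D(G)$-module categories reduces to the universal case $\mathbf{D}=\D(G)$, and, after taking weak $G$-invariants, to $\mathbf{D}=\mathfrak{g}\Mmod$. At that point $\mathbf{D}^{B^-,\mu_\kappa}$ is a direct sum of blocks of ordinary BGG category $\mathcal{O}$ indexed by $\nu\in\kappa(\check\rho,-)+\Lambda$, and the equivalence ``$W_\nu$ trivial $\Leftrightarrow$ finite Whittaker averaging is conservative on $\mathcal{O}_\nu$'' is exactly Losev's result \cite{Los}. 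In short, the paper trades the affine block decomposition you would have to build for a known finite-dimensional one, at the cost of the (already established) comparison (\ref{babywhit}) between affine and finite Whittaker averaging. If you want to pursue your route, the affine linkage principle for twisted $\Sph_{\kappa,x}$ is the load-bearing lemma you would need to supply.
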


\begin{rem}
	Explicitly, the condition in Proposition \ref{nice levels} for a simple group $G$ of type ADE is saying that $q  > h$.
\end{rem}

\begin{proof}
	By definition, it suffices to show that for any $\mathbf{D}$ with an action of $D_{\kappa}(LG)$ the functor 
	$$\Av_!^{\chi}: \mathbf{D}^{L^+G \cap LN} \rightarrow \Whit_{\kappa}(\mathbf{D})$$ 
	is conservative. Using (\ref{babywhit}) we reduce to showing that for any $\mathbf{D}$ with an action of $D(G)$ the functor 
	\begin{equation}
	\Av_!^{\chi}: \mathbf{D}^{B^-, \mu_{\kappa}} \rightarrow \mathbf{D}^{N, \chi}
	\end{equation}
	is conservative. It is enough to check the universal case $\mathbf{D} = \D(G)$. Moreover, taking weak $G$-invariants on the left is an invertible functor, hence we may take $\mathbf{D} = \mathfrak{g}\Mmod$.
	But then $$\mathbf{D}^{B^-, \mu_{\kappa}} \cong \oplus_{\nu \in \kappa(\check{\rho}, -) + \Lambda} \cO_{\nu},$$
	and the condition that $W_{\nu}$ is trivial is precisely the necessary and sufficient condition for conservativity of $\Av_!^{\chi}$ on $\cO_{\nu}$ by \cite{Los}.
\end{proof}

\section{Generalized Kostant slices.}

\subsection{$D$-Kostant slice.} Let $D$ be an $\check{\Lambda}^+$-valued divisor on $X$. In this subsection  we introduce a generalization of the global Kostant section, which will depend on $D$ and will be denoted by $\Kos_D$. 

\subsubsection{} Recall from \cite[Lemma 2.7.2]{CPSI} that a map $X \rightarrow \bA^1/\bG_m$ with the inverse image of the open point dense in $X$ is equivalent to the data of an effective Cartier divisor on $X$. Therefore we get that the scheme $$\Maps(X, \pt \subseteq \frac{\mathfrak{g_{\geq -1}}/\mathfrak{b} }{T}),$$ where $\pt \subset  \frac{\mathfrak{g_{\geq -1}}/\mathfrak{b} }{T}$ is the open point,  parameterizes  $\check{\Lambda}_{G}^+$-valued divisors on $X$.

\begin{ntn}
	Let subscript $X$ stand for a constant stack over $X$.
\end{ntn}
 
\begin{df}
	We define the stack $\Kos_D$ as the fiber product 
		\[
	\begin{tikzcd}
	\Kos_D\ar[r, rightarrow, ""']\ar[d, rightarrow, ""']&   	\Maps(X, (\frac{f + \mathfrak{b}}{N \rtimes\bG_m})_X \subseteq (\frac{\mathfrak{g}_{\geq -1}}{B \times \bG_m})_X) \times_{\Maps(X, \pt/\bG_m) }\{ \omega^{\frac{1}{2}}\}\ar[d, rightarrow, ""']  \\
	\{D\}\ar[r, rightarrow, ""']&  \Maps(X, \pt_X \subseteq (\frac{\mathfrak{g_{\geq -1}}/\mathfrak{b} }{T})_X),
	\end{tikzcd}
	\]
	where the action of $\bG_m$ on $\mathfrak{g}_{\geq -1}$ is given by $z \cdot y = z^2y$ for $z \in \bG_m$ and $y \in \mathfrak{g}_{\geq -1}$, the homomorphism 
	$$N \rtimes\bG_m \rightarrow B \times \bG_m$$
	is defined as $n \rtimes z \mapsto (n \cdot 2\check{\rho}(z), z)$,
	and the map $$B \times \bG_m \rightarrow T$$ is given by $$B \times \bG_m \xrightarrow{p \times 2\check{\rho}} T \times T \xrightarrow{m} T.$$
\end{df}

\begin{rem}\label{kosD points}
	Let us spell out explicitly what the points of $\Kos_D$ are. The stack $\Kos_D$
	parameterizes pairs $(\cF_B, s)$, where
	\begin{itemize}
		\item a $B$-bundle $\cF_B$, such that the induced $T$-bundle $\cF_T$ is $\check{\rho}(\omega)(-D)$,
		\item an element $s \in \Gamma(X, \omega \otimes (\cF_B \times^B \mathfrak{g}_{\geq -1}))$,
	\end{itemize}
	 such that the projection of $s$ along $$\Gamma(X, \omega \otimes( \cF_B \times^B \mathfrak{g}_{\geq -1})) \rightarrow \Gamma(X, \omega \otimes (\cF_T \times^T \mathfrak{g}_{\geq -1}/\mathfrak{b})) \cong \Gamma(X, \oplus_{i \in \cI} \cO(\check{\alpha}_i(D)))$$
	is the canonical section. 
\end{rem}

\begin{ex}\label{kosD SL2}
	For $G = \PGL_2$ the field-valued points of the stack $\Kos_D$ are the data (up to tensoring by a line bundle) of 
	\begin{itemize}
		\item 	2-dimensional vector bundles $\cE$,
		\item a filtration 
		c
		\item a Higgs field $\theta: \cE \rightarrow \cE \otimes \omega$,
	\end{itemize}
	such that $D$ is the divisor of zeroes of the map 
	$$\cE_1 \rightarrow \cE \xrightarrow{\theta} \cE\otimes \omega \rightarrow \cE_2.$$
\end{ex}

\subsubsection{} One of the important inputs in the main result of this paper is geometry of the intersection  $$\Kos_D \times_{T^* \Bun_G} \Nilp^{\reg}.$$ Let us continue Example \ref{kosD SL2} and describe points of this intersection in the case $G = \PGL_2$.
\subsubsection{} Recall from Proposition \ref{2.5.4.1} that irreducible components of $\Nilp^{\reg}$ are parameterized by a sublattice $$\check{\Lambda}^{\operatorname{rel}} \subset  \check{\Lambda}.$$ 

\begin{ex}
For $G = \PGL_2$ and $\check{\lambda} \in \check{\Lambda}^{\operatorname{rel}}$, points of $\widetilde{\Nilp}^{\reg, \check{\lambda}} $ are given by the data (up to tensoring by a line bundle) of
\begin{itemize}
	\item 2-dimensional vector bundles $\cE$,
	\item short exact sequences 
	$\cL_1 \rightarrow \cE \rightarrow \cL_2$ with $\deg \cL_1 - \deg \cL_2 = \check{\lambda}$,
	\item a non-zero map $\bar{\phi}: \cL_2 \rightarrow \cL_1 \otimes \omega$, such that the corresponding Higgs field is 
	$$\phi: \cE \rightarrow \cL_2 \xrightarrow{\bar{\phi}} \cL_1 \otimes \omega \rightarrow \cE \otimes \omega$$
\end{itemize}
\end{ex}

\begin{ex}
	For $G=\PGL_2$ the field-valued points of the stack $\Kos_D \times_{T^* \Bun_G} \Nilp^{\reg}_{\redu}$ are given by the data (up to tensoring by a line bundle) of
	\begin{itemize}
		\item 2-dimensional vector bundles $\cE$,
		\item filtration $\cL_1 \rightarrow \cE \rightarrow \cL_2$,
		\item a non-zero map $\bar{\phi}: \cL_2 \rightarrow \cL_1 \otimes \omega$, such that the corresponding Higgs field is 
		$$\phi: \cE \rightarrow \cL_2 \xrightarrow{\bar{\phi}} \cL_1 \otimes \omega \rightarrow \cE \otimes \omega$$
		\item filtration $$\cE_1\rightarrow \cE \rightarrow\cE_2,$$such that $D$ is the divisor of zeroes of the map 
		$$\cE_1 \rightarrow \cE \xrightarrow{\theta} \cE\otimes \omega \rightarrow \cE_2.$$
	\end{itemize}
	This implies that a point of $\Kos_D \times_{T^* \Bun_G} \Nilp^{\reg}_{\redu}$ gives a commutative diagram 
		\[
	\begin{tikzcd}
	&   & 0\ar[d, rightarrow, ""']& & \\
	&   & \cL_1 \ar[d, rightarrow, ""']\ar[dr, rightarrow, "\neq 0"']& &  \\
	0\ar[r, rightarrow, ""']&   \cE_1 \ar[r, rightarrow, ""']  & \cE \ar[r, rightarrow, ""']\ar[d, rightarrow, ""']& \cE_2\ar[r, rightarrow, ""']& 0\\
	&   & \cL_2\ar[d, rightarrow, ""']& & \\
	&   & 0.& & 
	\end{tikzcd}
	\]
\end{ex}

\subsubsection{} 	Comparing to \cite[Example 2.11.2] {CPSI} we see that $\Kos_D \times_{T^* \Bun_G} \Nilp^{\reg}_{\redu}$ is connected to Zastava spaces. 

\subsection{Zastava spaces.}
In this subsection we digress to review the definition and properties of the relevant Zastava space introduced in \cite{BFGM}.

\begin{df}
	Let $\stackrel{\circ}{\cZ}$ denote the stack $$\Maps(X, \pt/T \subseteq (\pt/B \times_{\pt/G} \pt/B)) \times_{\Bun_T} \{ \check{\rho}(\omega)(-D) \},$$
	where $\pt/T$ embeds into $ \pt/T \subseteq (\pt/B \times_{\pt/G} \pt/B)$ as an open Bruhat cell and the map $$\Maps(X, \pt/T \subseteq (\pt/B \times_{\pt/G} \pt/B)) \rightarrow {\Bun_T}$$
	is induced by projection on the first coordinate 
	$$\pt/B \times_{\pt/G} \pt/B \rightarrow \pt/B \rightarrow \pt/T.$$
\end{df}
We have an open embedding $$\stackrel{\circ}{\cZ} \rightarrow \Bun_N^{\omega(-D)} \times_{\Bun_G} \Bun_B.$$ 
Also, as in \cite[2.11]{CPSI}, we have a map 
$$\stackrel{\circ}{\pi} : \stackrel{\circ}{\cZ} \rightarrow \Div^{\check{\Lambda}^+}_{\eff},$$
where $\Div^{\Lambda^+}_{\eff}$ is the scheme of $\check{\Lambda}^+$-divisors on $X$. let $\Div^{\check{\lambda}}_{\eff}$ denote the connected component corresponding to $\check{\lambda} \in \check{\Lambda}^+$. Let $$\stackrel{\circ}{\pi}^{\check{\lambda}} : \stackrel{\circ}{\cZ}_{\check{\lambda}} \rightarrow \Div^{^{\check{\lambda}}}_{\eff}$$
denote the fiber of $\stackrel{\circ}{\cZ}$ over $ \Div^{^{\check{\lambda}}}_{\eff}$. As in \cite{BFGM} and \cite[2.11]{CPSI}, we have that $\stackrel{\circ}{\cZ}_{\check{\lambda}}$ is a smooth variety.

\begin{ex}\label{pt}
	For $\check{\lambda}=0$ we have $\stackrel{\circ}{\cZ}_{0} = \pt$.
\end{ex}

\subsection{Intersection of $D$-Kostant slice and $\Nilp^{\reg}_{\redu}$ vs Zastava space.}

Our first goal is to construct a map $$\iota_D: \Kos_D \times_{T^* \Bun_G} \widetilde{\Nilp}^{\reg} \rightarrow \stackrel{\circ}{\cZ},$$
where $$\widetilde{\Nilp}^{\reg} := \Maps(X, \frac{\stackrel{\circ}{\mathfrak{n}}}{B \times \bG_m} \subseteq \frac{\mathfrak{n}}{B\times \bG_m})\times_{\Maps(X, \pt / \bG_m)} \{ \omega\}.$$

\begin{cnstr}
	By definition, $\Kos_D \times_{T^* \Bun_G} \widetilde{\Nilp}^{\reg}$ is given by 
	\begin{equation}
	\begin{split}
	\Maps\Biggl(X,  \Bigl(\frac{f+\mathfrak{b}}{N \rtimes \bG_m} \times_{\frac{\mathfrak{g}}{G \times \bG_m}} \frac{\stackrel{\circ}{\mathfrak{n}}}{B \times \bG_m}\Bigr)_X \subseteq \{D\}_X \times_{(\frac{\mathfrak{g}_{\geq -1}/\mathfrak{b}}{T})_X}\Bigl(\frac{\mathfrak{g}_{\geq -1}}{B \times \bG_m} \times_{\frac{\mathfrak{g}}{G \times \bG_m}} \frac{\mathfrak{n}}{B\times \bG_m}\Bigr)_X \Biggr)\times_{\Bun_{\bG_m}}  \{ \omega\}.
	\end{split}
	\end{equation}

	This obviously maps to $$\Maps(X, \{ \check{\rho}(\omega)(-D)\}_X\times_{(\pt/T)_X} (\pt/B \times_{\pt/G} \pt/B)_X  ),$$
	but we need to check that the image generically lands in the open stratum $$\pt \subset_X \{ \check{\rho}(\omega)(-D)\}_X\times_{(\pt/T)_X} (\pt/B \times_{\pt/G} \pt/B)_X .$$ 
	It suffices to show that the composition
	$$\frac{f+\mathfrak{b}}{N } \times_{\frac{\mathfrak{g}}{G}} \frac{\mathfrak{\mathfrak{n}^\circ}}{B} \subset  \frac{\mathfrak{g}_{\geq -1}}{B } \times_{\frac{\mathfrak{g}}{G}} \frac{\mathfrak{n}}{B} \rightarrow \pt/B \times_{\pt/ G} \pt/B$$
	factors through the open orbit $$\frac{\pt}{T}\subset (\pt/B \times_{\pt/G} \pt/B).$$ However, 
	$$\frac{f+\mathfrak{b}}{N } \times_{\frac{\mathfrak{g}}{G}} \frac{\mathfrak{\mathfrak{n}^\circ}}{B} \cong \pt,$$
	and one can explicitly describe this point and show that the image lands in the open stratum $\frac{\pt}{T}$.
\end{cnstr}

\begin{lm}\label{intersection smooth}
	The map $\iota_{D}$ is schematic. In particular, since $\stackrel{\circ}{\cZ}$ is a scheme, we get that $$\Kos_D \times_{T^* \Bun_G} \widetilde{\Nilp}^{\reg}$$ is a scheme.
\end{lm}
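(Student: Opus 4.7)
The plan is to verify schematicity fiber-by-fiber: for an arbitrary affine test scheme $S$ equipped with a map $S \to \stackrel{\circ}{\cZ}$, I would show that the fiber product $(\Kos_D \times_{T^* \Bun_G} \widetilde{\Nilp}^{\reg}) \times_{\stackrel{\circ}{\cZ}} S$ is representable by an affine $S$-scheme. First I would unpack what such an $S$-point of $\stackrel{\circ}{\cZ}$ supplies: a $G$-bundle $\cF_G$ on $X \times S$ equipped with two $B$-reductions $\cF_B, \cF'_B$ that are in relative general position on a schematically dense open subset $U \subseteq X \times S$, together with a rigidification of the induced $T$-bundle $\cF_B \times^B T \cong \check{\rho}(\omega)(-D)$.

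Next I would rewrite the fiber as a moduli of global sections. Unwinding the defining diagram for the source of $\iota_D$, the fiber classifies sections $s$ of the vector bundle $\omega \otimes \cF_G(\mathfrak{g})$ on $X \times S$ satisfying two conditions: via the reduction $\cF_B$, the section $s$ must lie in $\omega \otimes \cF_B(f + \mathfrak{b})$ with the prescribed projection to $\omega \otimes \cF_B(\mathfrak{g}_{\geq -1}/\mathfrak{b})$ determined by the divisor $D$; and via $\cF'_B$, the section $s$ must lie in $\omega \otimes \cF'_B(\stackrel{\circ}{\mathfrak{n}})$. The space of global sections of the vector bundle $\omega \otimes \cF_G(\mathfrak{g})$ relative to $S$ is itself representable by an affine $S$-scheme (as the Weil restriction of a vector bundle along the proper flat map $X \times S \to S$), and the two conditions above carve out a locally closed subscheme of it.

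The crucial input, already recorded during the construction of $\iota_D$, is the isomorphism
$$\frac{f + \mathfrak{b}}{N} \times_{\mathfrak{g}/G} \frac{\stackrel{\circ}{\mathfrak{n}}}{B} \cong \pt.$$
Over the open locus $U$ where the two $B$-reductions are in general position, this point isomorphism forces $s$ to equal a unique canonical section on $U$, so globally the two conditions collapse to a single closed-extension condition on sections of a coherent sheaf on $X \times S$ that agree on $U$ with a prescribed model section. This is manifestly representable by an affine $S$-scheme, which proves $\iota_D$ is schematic; combined with the fact, established in \cite{BFGM}, that $\stackrel{\circ}{\cZ}$ is itself a scheme, the in-particular statement follows. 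I expect the only subtle part to be the bookkeeping required to translate the abstract fiber product of mapping stacks into the concrete moduli of sections of a single coherent sheaf on $X \times S$; once that translation is in place, representability is automatic from standard properties of Weil restriction.
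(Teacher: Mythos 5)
Your argument is correct but takes a genuinely different route from the paper. The paper reduces the problem to the target side: since $X$ is projective, it suffices (by a general section-scheme/Weil-restriction principle for mapping stacks) to show that the induced map of target stacks over $X$ is quasi-projective schematic, and it then checks this by factoring through intermediate stages and invoking quasi-projectivity of elementary pieces such as $\mathfrak{g}/G \to \pt/G$ and $\mathfrak{n}/B \to \pt/B$. You instead unpack the fiber over an arbitrary affine $S$-point of $\stackrel{\circ}{\cZ}$ directly, realizing it inside the relative section space of $\omega \otimes \cF_G(\mathfrak{g})$ on $X \times S$. Both arguments ultimately rest on the same representability result for spaces of sections over a projective base, but yours has the virtue of making explicit where the point isomorphism $\tfrac{f+\mathfrak{b}}{N} \times_{\mathfrak{g}/G} \tfrac{\stackrel{\circ}{\mathfrak{n}}}{B} \cong \pt$ is actually used: it, combined with the $\{D\}$-constraint, forces a section satisfying the closed linear conditions to land in the open locus over the dense flat open where the two reductions are transversal and away from $\supp D$, so that the ``generic'' condition built into the source prestack imposes no additional constraint and the fiber is just a closed subscheme of an affine $S$-scheme of sections. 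The paper silently absorbs this into the asserted general reduction, which it does not reprove. Two small points where your phrase ``manifestly representable'' is doing real work and deserve a sentence in a finished write-up: the relative section space $\Gamma_{X\times S/S}(\omega\otimes\cF_G(\mathfrak{g}))$ is an affine $S$-scheme by coherent cohomology and base change for the proper morphism $X \times S \to S$, not by Weil restriction along a finite flat map; and since the closed conditions already determine $s$ uniquely on $U$, the requirement that $s$ agree there with the model section is a consequence of, rather than an additional imposition on top of, those closed conditions.
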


\begin{proof}
		Since $X$ is projective, it suffices to show that the map $$\{D\}_X \times_{(\frac{\mathfrak{g}_{\geq -1}/\mathfrak{b}}{T})_X}\Bigl(\frac{\mathfrak{g}_{\geq -1}}{B} \times_{\frac{\mathfrak{g}}{G }} \frac{\mathfrak{n}}{B}\Bigr)_X \rightarrow \{ \cO(-D)\}_X\times_{(\pt/T)_X} (\pt/B \times_{\pt/G} \pt/B)_X $$ is quasi-projective schematic. However, note that 
		
		$$\{D\}_X \times_{(\frac{\mathfrak{g}_{\geq -1}/\mathfrak{b}}{T})_X}(\frac{\mathfrak{g}_{\geq -1}}{B})_X \rightarrow \{\cO(-D)\}_X\times_{(\pt/T)_X} (\frac{\mathfrak{g}_{\geq -1}}{B})_X \rightarrow  \{\cO(-D)\}_X\times_{(\pt/T)_X} (\pt/B)_X$$
		is quasi-projective schematic since the first and the second maps are quasi-projective schematic. Also, 
		$$\frac{\mathfrak{g}}{G } \rightarrow \pt/G,$$
		$$\frac{n}{B} \rightarrow \pt/B$$
		are quasi-projective schematic, hence the result.

		%First note that $\iota_D$ is schematic, and since $\stackrel{\circ}{\cZ}$ is a scheme we get that $ \Kos_D \times_{T^* \Bun_G} \widetilde{\Nilp}^{\reg} $ is also a scheme. 
		%Let us show that $\iota_D$ is smooth. Indeed, this follows from the fact that $ \Kos_D \times_{T^* \Bun_G} \widetilde{\Nilp}^{\reg} $  is open in 
		%$$\Maps(X,  \{ \check{\rho}(\omega)(-D) , \omega^{\frac{1}{2}}\}\times_{{\frac{\pt}{T \times \bG_m}}_X} \Bigl(\frac{\mathfrak{g}_{\geq -1}}{B \times \bG_m} \times_{\frac{\mathfrak{g}}{G \times \bG_m}} \frac{\mathfrak{n}}{B\times \bG_m}\Bigr)_X),$$
		%and $$\{ \check{\rho}(\omega)(-D) , \omega^{\frac{1}{2}}\}\times_{{\frac{\pt}{T \times \bG_m}}_X} \Bigl(\frac{\mathfrak{g}_{\geq -1}}{B \times \bG_m} \times_{\frac{\mathfrak{g}}{G \times \bG_m}} \frac{\mathfrak{n}}{B\times \bG_m}\Bigr)_X \rightarrow \{ \check{\rho}(\omega)(-D) \}\times_{\frac{\pt}{T}_X} (\pt/B \times_{\pt/G} \pt/B)_X$$ is smooth.

	%By \cite{FR}[Proposition 2.5.4.1] we have that $\widetilde{\Nilp}^{\reg} $ is smooth, so it suffices to check that $$\Kos_D \rightarrow T^* \Bun_G$$ is smooth. Since $\Kos_D$ is open in $\Maps(X, (\frac{\mathfrak{g}_{\geq -1}}{B \times \bG_m})_X \times_{(\pt / T \times\bG_m)_X} \{ \check{\rho}(\omega)(-D), \omega^{\frac{1}{2}}\})$, it is enough to show that $$(\frac{\mathfrak{g}_{\geq -1}}{B \times \bG_m})_X \times_{(\pt / T \times\bG_m)_X} \{ \check{\rho}(\omega)(-D), \omega^{\frac{1}{2}}\} \rightarrow (\frac{\mathfrak{g}_{\geq -1}}{B \times \bG_m})_X \rightarrow (\frac{\mathfrak{g}}{G \times \bG_m})_X$$ is smooth, which is immediate.
\end{proof}

\begin{pr}\label{intersection and zastava closed embedding}
	The map $$\iota_{D}: \Kos_D \times_{T^* \Bun_G} \widetilde{\Nilp}^{\reg} \rightarrow \stackrel{\circ}{\cZ}$$ is a closed embedding.
\end{pr}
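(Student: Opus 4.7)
My plan is to verify that $\iota_D$ is simultaneously a monomorphism and proper; since the source is a scheme by Lemma~\ref{intersection smooth} and $\stackrel{\circ}{\cZ}$ is a scheme by the Zastava discussion above, this forces $\iota_D$ to be a closed embedding.

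First I would establish the monomorphism property. Fix a test scheme $S$ and an $S$-point of $\stackrel{\circ}{\cZ}$, that is, a $G$-bundle $\cF_G$ on $X\times S$ together with two $B$-reductions $\cF_B,\cF_B'$ generically in the open Bruhat stratum. Let $U\subseteq X\times S$ denote the open subscheme on which the two reductions are strictly transverse; on $U$ we acquire a $T$-reduction and a splitting $\mathfrak{g}_{\cF_G}|_U=\mathfrak{n}_U\oplus\mathfrak{t}_U\oplus\mathfrak{n}^-_U$. The two defining conditions on a Higgs field $s$ lifting the $S$-point to the source---that $s\in f+\mathfrak{b}$ with respect to the $\cF_B$-framing and that $s$ be regular nilpotent in $\mathfrak{n}^-$ with respect to the $\cF_{B^-}$-framing---intersect on $U$ in a single canonical element, because $(f+\mathfrak{b})\cap\mathfrak{n}^-=\{f\}$; this is encoded in the identification $\tfrac{f+\mathfrak{b}}{N}\times_{\mathfrak{g}/G}\tfrac{\mathfrak{n}^{\circ}}{B}\cong\pt$ recorded in the construction of $\iota_D$. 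Since $\omega\otimes(\cF_B\times^B\mathfrak{g}_{\geq -1})$ is coherent on $X\times S$ and $U\hookrightarrow X\times S$ is schematically dense, a lift is determined by its restriction to $U$ and is therefore unique.

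Next I would verify properness via the valuative criterion. Given a DVR $R$ with fraction field $K$, an $R$-point $\Spec R\to\stackrel{\circ}{\cZ}$ together with a lift $\Spec K\to \Kos_D\times_{T^*\Bun_G}\widetilde{\Nilp}^{\reg}$ of its generic fiber, I need to extend the lift over all of $\Spec R$. The transversality locus $U\subseteq X\times\Spec R$ contains $X\times\Spec K$ as a schematically dense subset, and the monomorphism analysis forces the Higgs field on $U$ to be the canonical $f$-section coming from the frame splitting. The task is then to verify that this $U$-defined section extends to a global section of $\omega\otimes(\cF_B\times^B\mathfrak{g}_{\geq -1})$ on $X\times\Spec R$ satisfying the defining conditions. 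The closed conditions---membership in $f+\mathfrak{b}$ and prescribed projection to $\bigoplus_i\cO(\check{\alpha}_i(D))$---propagate from the dense open by closedness, and the open condition of generic nilpotent regularity is inherited from the $K$-fiber.

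The crux of the argument, and the main obstacle, is the extension step in the valuative criterion: showing that the canonical $f$-section defined on $U$ extends without acquiring unwanted poles across the non-transverse locus $X\times\Spec R\setminus U$ as a section of the twisted bundle $\omega\otimes(\cF_B\times^B\mathfrak{g}_{\geq -1})$. I plan to handle this by unwinding the definition of $\Kos_D$: the twist by $\mathfrak{g}_{\geq -1}$ and the divisor data encoded by the fiber over $\{D\}$ are set up precisely so that the order of the admissible poles along the non-transverse locus matches the behavior of the $f$-section---this matching is exactly why the Kostant slice is defined in terms of $\mathfrak{g}_{\geq -1}$ rather than $\mathfrak{g}$. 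With the extension in hand, the proper monomorphism $\iota_D$ between schemes is automatically a closed embedding.
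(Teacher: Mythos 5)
Your overall strategy—prove $\iota_D$ is both a monomorphism and proper, then conclude it is a closed embedding—is the same as the paper's, and your monomorphism argument is essentially correct and matches the paper's: on the generically transverse locus the Higgs field is pinned down by $(f+\mathfrak{b})\cap\mathfrak{n}=\{f\}$, and schematic density plus torsion-freeness of $\omega\otimes(\cF_B\times^B\mathfrak{g}_{\geq -1})$ force uniqueness.

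The gap is in the properness step. First, a factual error: you write that the transversality locus $U\subseteq X\times\Spec R$ ``contains $X\times\Spec K$ as a schematically dense subset.'' That containment is false in general—the two Borel reductions are only \emph{generically} transverse on each $X$-fiber, so $X\times\Spec K\not\subseteq U$; only $U\cap(X\times\Spec K)$ is dense in $X\times\Spec K$. More seriously, your $U$ (the transversality locus) can have \emph{codimension one} complement in $X\times\Spec R$, and the core of your argument—that the $f$-section on $U$ extends regularly across this codimension-one non-transverse locus—is exactly the part you leave as a vague plan (``unwinding the definition of $\Kos_D$\ldots''). There is no a priori reason a section of a torsion-free sheaf defined off a divisor should extend, and your proposed resolution does not actually carry this out. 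The paper sidesteps this entirely: it does not try to extend from the transversality locus, but instead observes that you already have the required data over $\stackrel{\circ}{\cD}\times X$ (the valuative criterion hypothesis) \emph{and} over $\cD\times\xi$, where $\xi$ is the generic point of $X$ (obtained from $\stackrel{\circ}{\cY}_s\cong\stackrel{\circ}{\cY}_t$). The union of $\stackrel{\circ}{\cD}\times X$ with an open neighborhood of $\cD\times\xi$ has \emph{codimension two} complement in $\cD\times X$, after which a Hartogs-type extension (maps from a normal scheme to affine targets extend across codimension-two loci) finishes the argument. You should replace the vague extension-across-a-divisor step with this codimension-two reduction; otherwise the properness claim is not established.
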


\begin{proof}
	
	We first show that $\iota_D$ is proper. We will do so by checking the valuative criterion, following an argument of \cite[Appendix A]{FKM}. 
	Denote by $\xi$ the generic point of $X$. Let $\cD$ be the spectrum of a discrete valuation ring, and let $\stackrel{\circ}{\cD} \subset \cD$  be the spectrum of its fraction field. 
	
	To unburden the notation, denote 
	$$ \{ \omega\}\times_{{\frac{\pt}{\bG_m}}_X} \Bigl(\frac{f+\mathfrak{b}}{N \rtimes \bG_m} \times_{\frac{\mathfrak{g}}{G \times \bG_m}} \frac{\stackrel{\circ}{\mathfrak{n}}}{B \times \bG_m}\Bigr)_X \subseteq  \biggl(\{ D , \omega\}\times_{(\frac{\mathfrak{g_{\geq -1}}/\mathfrak{b}}{T \times}  \times \pt / \bG_m)_X} \Bigl(\frac{\mathfrak{g}_{\geq -1}}{B \times \bG_m} \times_{\frac{\mathfrak{g}}{G \times \bG_m}} \frac{\mathfrak{n}}{B\times \bG_m}\Bigr)_X\biggr) $$
	by $\stackrel{\circ}{\cY_s} \subseteq \cY_s,$
	and 
	$$\pt\subset \{ \check{\rho}(\omega)(-D) \}\times_{\frac{\pt}{T}_X} (\pt/B \times_{\pt/G} \pt/B)_X$$
	by $\stackrel{\circ}{\cY_t} \subseteq \cY_t.$ Given a solid commutative cube
	\begin{equation}\label{cube}
	\begin{tikzcd}[row sep=1.5em, column sep = 1.5em]
	\stackrel{\circ}{\cD}\times \xi \arrow[rr] \arrow[dr, ,""] \arrow[dd,swap] &&
	\stackrel{\circ}{\cY_s} \arrow[dd, "\simeq"] \arrow[dr,""] \\
	& \stackrel{\circ}{\cD}\times X \arrow[dd] \arrow[rr] &&
	\cY_s\arrow[dd,""] \\
	\cD \times \xi \arrow[rr,] \arrow[dr, ""] && \stackrel{\circ}{\cY_t} \arrow[dr] \\
	& \cD \times X \arrow[rr] \arrow[rruu, dashed]&& \cY_t
	\end{tikzcd}
	\end{equation}
	we need to produce the dashed arrow. Note that since $ \stackrel{\circ}{\cY_s} \rightarrow  \stackrel{\circ}{\cY_t}$ is an isomorphism we get a map $\cD \times \xi \rightarrow \stackrel{\circ}{\cY_s}$.
	
	Let us spell out the data encoded by the map $\cD \times X  \rightarrow \cY_t$ in (\ref{cube}). We are given a $G$-bundle $\cF$ on $\cD \times X$ with two $B$-reductions $\cF_B^1, \cF_B^2$ with the condition that the $T$-bundle $\cF_T^1$ induced from $\cF_B^1$ is $\check{\rho}(\omega)(-D)$. The desired lift $\cD \times X \rightarrow {\cY_s}$ encodes the data of equivariant maps 
	\begin{itemize}
		\item$\cF_B^1 \times_X \Tot(\omega) \rightarrow \mathfrak{g_{\geq -1}},$
		\item $\cF_B^2 \times_X \Tot(\omega) \rightarrow \mathfrak{n},$
		\item $\cF\times_X \Tot(\omega) \rightarrow \mathfrak{g},$
	\end{itemize}
	such that the induced map $$\cF_T^1 \rightarrow \mathfrak{g}_{\geq -1} / \mathfrak{b}$$
	corresponds to $\{D\} \in \Maps(X, \frac{\mathfrak{g}_{\geq -1} / \mathfrak{b}}{T})$.

	But we have two compatible systems of maps: 
		\begin{itemize}
		\item$(\cF_B^1 \times_X \Tot(\omega))_{\cD \times \xi} \rightarrow \mathfrak{g_{\geq -1}},$
		\item $(\cF_B^2 \times_X \Tot(\omega))_{\cD \times \xi}  \rightarrow \mathfrak{n},$
		\item $(\cF\times_X \Tot(\omega) )_{\cD \times \xi} \rightarrow \mathfrak{g},$
	\end{itemize}
and 
		\begin{itemize}
		\item$(\cF_B^1 \times_X \Tot(\omega))_{\stackrel{\circ}{\cD} \times X} \rightarrow \mathfrak{g_{\geq -1}},$
		\item $(\cF_B^2 \times_X \Tot(\omega))_{\stackrel{\circ}{\cD} \times X}  \rightarrow \mathfrak{n},$
		\item $(\cF\times_X \Tot(\omega) )_{\stackrel{\circ}{\cD} \times X} \rightarrow \mathfrak{g}$
	\end{itemize}
	satisfying the above condition. Combined they give 
	\begin{itemize}
		\item$(\cF_B^1 \times_X \Tot(\omega))_{U} \rightarrow \mathfrak{g_{\geq -1}},$
		\item $(\cF_B^2 \times_X \Tot(\omega))_{U}  \rightarrow \mathfrak{n},$
		\item $(\cF\times_X \Tot(\omega) )_{U} \rightarrow \mathfrak{g}.$
	\end{itemize}
	for some dense open $U \subset \cD \times X$ such that $\codim_{\cD \times X}(\cD \times X \setminus U) = 2$. Then since the targets are affine and sources are normal we get the desired maps. The fact that the first map satisfies the above condition follows from the fact that it does so when restricted to $U$. 
	
	To prove that $\iota_{D}$ is a closed embedding it is left to show that it is a monomorphism. Let us check that $\iota_{D}$ is injective on $S$-points for some test affine scheme $S$. Suppose that for some affine $S$ the map on $S$-points is not injective. This means that there exists a triple $\{ \cF, \cF_B^1, \cF_B^2\}$ such that $$\cF_T^1 \cong \check{\rho}(\omega)(-D)$$ with two sets of Higgs fields:
		\begin{itemize}
		\item$\theta^1: \cF_B^1 \times_{S \times X} \Tot(\omega) \rightarrow \mathfrak{g_{\geq -1}},$
		\item $\theta^2: \cF_B^2 \times_{S \times X}\Tot(\omega) \rightarrow \mathfrak{n},$
		\item $\theta: \cF\times_{S \times X} \Tot(\omega) \rightarrow \mathfrak{g},$
	\end{itemize}
	and 
			\begin{itemize}
			\item$\eta^1: \cF_B^1 \times_{S \times X} \Tot(\omega) \rightarrow \mathfrak{g_{\geq -1}},$
			\item $\eta^2: \cF_B^2 \times_{S \times X} \Tot(\omega) \rightarrow \mathfrak{n},$
			\item $\eta: \cF\times_{S \times X} \Tot(\omega) \rightarrow \mathfrak{g},$
			\end{itemize}
	satisfying the above condition. Let us show that $\theta - \eta$, $\theta^i - \eta^i$ are trivial Higgs fields. It suffices to show that they are trivial generically on $S \times X$. But note that the triple $\{\theta^1 - \eta^1, \theta^2 - \eta^2, \theta - \eta\}$ gives a map 
	$$S \times X \rightarrow \frac{\mathfrak{b}}{B} \times_{\frac{\mathfrak{g}}{G}} \frac{\mathfrak{n}}{B},$$
	and since generically the two $B$-reductions are transversal, we get that the Higgs field is generically zero.
\end{proof} 

\begin{ntn}
	Let $\widetilde{\Nilp}^{\reg, \check{\varphi}}:= \widetilde{\Nilp}^{\reg} \times_{\Bun_T} \Bun_T^{\check{\varphi}}$. 
\end{ntn}
Note that $$\widetilde{\Nilp}^{\reg, \check{\varphi}} \rightarrow \Nilp^{\reg}$$ is a locally closed embedding. 
Recall (e.g. from Proposition \ref{2.5.4.1}) that $\widetilde{\Nilp}^{\reg, \check{\varphi}}$ smooth and connected of dimension $\dim(\Bun_G)$.

\begin{ntn}
	We denote by $\Nilp^{\reg, \check{\varphi}}$ the closure of $\widetilde{\Nilp}^{\reg, \check{\varphi}}$ in $\Nilp^{\reg}_{\redu}$.
\end{ntn}

\begin{lm}\label{non-empty intersections}
	The scheme $\Kos_D \times_{T^* \Bun_G} \widetilde{\Nilp}^{\reg, \check{\varphi}}$ is non-empty only if 
	$$(2-2g)\check{\rho} \leq \check{\varphi}\leq (2-2g)\check{\rho} + \deg(D).$$
	Moreover, for $\check{\varphi} = (2-2g)\check{\rho} + \deg(D)$ the scheme $\Kos_D \times_{T^* \Bun_G} \widetilde{\Nilp}^{\reg, \check{\varphi}}$ is a reduced point.
\end{lm}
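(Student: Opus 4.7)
The plan is to use the closed embedding $\iota_D \colon \Kos_D \times_{T^*\Bun_G} \widetilde{\Nilp}^{\reg} \hookrightarrow \stackrel{\circ}{\cZ}$ from Proposition~\ref{intersection and zastava closed embedding}, together with the decomposition $\stackrel{\circ}{\cZ} = \bigsqcup_{\check{\lambda} \in \check{\Lambda}^+} \stackrel{\circ}{\cZ}_{\check{\lambda}}$ coming from the Zastava-divisor map. Fixing $\check{\varphi}$ pins down the component $\check{\lambda}$ in which the image lies, and both inequalities in the lemma will come from constraints on this $\check{\lambda}$.

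First I would compute the assignment $\check{\varphi} \mapsto \check{\lambda}$ explicitly. The first Zastava $T$-bundle $\cF_T^1$ is fixed to $\check{\rho}(\omega)(-D)$, of degree $(2g-2)\check{\rho} - \deg D$, by the $\Kos_D$-factor, while the second $T$-bundle $\cF_T^2$ has degree $\check{\varphi}$, dictated by the $\widetilde{\Nilp}^{\reg,\check{\varphi}}$-factor. A Pl\"ucker-section calculation on $\stackrel{\circ}{\cZ}$ then identifies the degree of the Zastava divisor as $\check{\lambda} = (2-2g)\check{\rho} + \deg D - \check{\varphi}$. The effectivity condition $\check{\lambda} \in \check{\Lambda}^+$ yields the upper bound $\check{\varphi} \leq (2-2g)\check{\rho} + \deg D$, while the lower bound $(2-2g)\check{\rho} \leq \check{\varphi}$ is automatic from non-emptiness of $\widetilde{\Nilp}^{\reg,\check{\varphi}}$ itself, via Proposition~\ref{2.5.4.1}(3).

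At the boundary $\check{\varphi} = (2-2g)\check{\rho} + \deg D$ one has $\check{\lambda} = 0$, so the image lies in $\stackrel{\circ}{\cZ}_0 \cong \pt$ by Example~\ref{pt}. Since $\iota_D$ is a closed embedding, $\Kos_D \times_{T^*\Bun_G} \widetilde{\Nilp}^{\reg,\check{\varphi}}$ is then a closed subscheme of $\pt$, hence either empty or a single reduced point. To exhibit a point I would start from $\cF_B^1 := \check{\rho}(\omega)(-D) \times^T B$ together with the Higgs field $\phi = \sum_i \sigma_i \cdot e_{-\alpha_i}$, where $\sigma_i$ is the canonical section of $\cO(\alpha_i(D))$; this is manifestly a Kostant-slice point with $\mathfrak{b}$-component $b = 0$. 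Since $\phi$ is generically regular nilpotent, one extracts a unique $B$-reduction $\cF_B^2$ of the induced $G$-bundle containing $\phi$ in its nilpotent radical, and this yields the desired point of $\widetilde{\Nilp}^{\reg,\check{\varphi}}$; the degree of the resulting $\cF_T^2$ is $(2-2g)\check{\rho} + \deg D$ by the same degree calculation used above.

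The main obstacle is the Pl\"ucker degree computation: the two projections $\stackrel{\circ}{\cZ} \to \Bun_T$ giving $\cF_T^1$ and $\cF_T^2$ are related through the long Weyl element, and obtaining the clean form $\check{\lambda} = (2-2g)\check{\rho} + \deg D - \check{\varphi}$ requires tracking that twist carefully through the Kostant-slice and $\widetilde{\Nilp}^{\reg}$ data. A secondary technical check is confirming that the generic $B$-reduction $\cF_B^2$ in the boundary construction extends across all of $X$ to give a true field-valued point of the stack.
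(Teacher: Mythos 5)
Your proposal matches the paper's proof in essentially every step: it uses the closed embedding $\iota_D$ into $\stackrel{\circ}{\cZ}$, identifies the relevant Zastava component via a Pl\"ucker/degree computation, obtains the upper bound from effectivity of the Zastava degree, the lower bound from Proposition~\ref{2.5.4.1}, and handles the boundary case via $\stackrel{\circ}{\cZ}_0 \cong \pt$. The only minor difference is that you fold the explicit construction of the Kostant point into the proof, whereas the paper records it separately in Example~\ref{lambda_D}; your formula $\check\lambda = (2-2g)\check\rho + \deg D - \check\varphi$ is in fact the correct one (the paper's displayed formula has a sign typo in the $\deg(D)$ term).
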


\begin{proof}
	By Proposition \ref{2.5.4.1} for $(2-2g)\check{\rho} > \check{\varphi}$ we get that  $$\widetilde{\Nilp}^{\reg, \check{\varphi}} = \emptyset,$$
	so $(2-2g)\check{\rho} \leq \check{\varphi}$. 
	
	Let us introduce another notation for the stack $$\Maps(X, \pt/T \subseteq \pt/B \times_{\pt/G} \pt/B) \cong \Maps(X, \pt/T \subseteq \pt/B \times_{\pt/G} \pt/B^-).$$ We will also call it $(\Bun_B \times_{\Bun_{G}} \Bun_{B^-})^0$. Note that
	$${\iota_{D}}|_{\Kos_D \times_{T^* \Bun_G} \widetilde{\Nilp}^{\reg, \check{\varphi}}}: \Kos_D \times_{T^* \Bun_G} \widetilde{\Nilp}^{\reg, \check{\varphi}} \hookrightarrow \stackrel{\circ}{\cZ}$$
	factors through $$(\Bun_B^{(2g-2)\check{\rho} - \deg(D)} \times_{\Bun_{G}} \Bun_{B^-}^{-\check{\varphi}})^0 \times_{\Bun_B^{(2g-2)\check{\rho}  - \deg(D)}} \{\check{\rho}(\omega)(-D)\} \hookrightarrow  \stackrel{\circ}{\cZ} .$$
	By \cite{BFGM}[Proposition 3.2] we have
	$$(\Bun_B^{(2g-2)\check{\rho} - \deg(D) } \times_{\Bun_{G}} \Bun_{B^-}^{-\check{\varphi}})^0 \times_{\Bun_B^{(2g-2)\check{\rho}  - \deg(D)}} \{\check{\rho}(\omega)(-D)\}  \cong \stackrel{\circ}{\cZ}_{-\check{\varphi} - (2g-2)\check{\rho}  - \deg(D)}$$
	for $\check{\varphi} \leq (2-2g)\check{\rho} +\deg(D).$
	Then the second assertion follows from Example \ref{pt}.
	Finally, it is left to check that for $\check{\varphi} >(2-2g)\check{\rho} +\deg(D)$ one has  that
	\begin{equation}\label{Pl}
		(\Bun_B^{(2g-2)\check{\rho} - \deg(D) } \times_{\Bun_{G}} \Bun_{B^-}^{-\check{\varphi}})^0 \times_{\Bun_B^{(2g-2)\check{\rho}  - \deg(D)}} \{\check{\rho}(\omega)(-D)\}
	\end{equation}
		is empty. Indeed, using Plucker description of $\Bun_B$ and $\Bun_{B^-}$, the data of a point of (\ref{Pl}) gives a non-zero map 
		$$\cL^{\lambda}_{\cF_T^{(2g-2)\check{\rho}  - \deg(D)}} \hookrightarrow \cV^{\lambda}_{\cF_G} \twoheadrightarrow \cL^{\lambda}_{\cF_T^{-\check{\varphi}}}$$
		for each dominant weight $\lambda$. Here $\cF_T^{(2g-2)\check{\rho}  - \deg(D)}$ and $\cF_T^{-\check{\varphi}}$ are elements of $\Bun_T^{(2g-2)\check{\rho}  - \deg(D)}$ and $\Bun_T^{-\check{\varphi}}$ respectively. But this implies that $\check{\varphi} \leq (2-2g)\check{\rho} +\deg(D).$
\end{proof}

\begin{ex}\label{lambda_D}
	Denote the point $$ \Kos_D \times_{T^* \Bun_G} {\widetilde{\Nilp}}^{\reg, (2-2g)\check{\rho} + \deg(D)}$$ by $\lambda_D$. Let us give an explicit description of $\lambda_D \in T^*\Bun_G$. Its underlying $G$-bundle is induced from $\check{\rho}(\omega)(-D)$ via $T \subset G$. The Higgs field is given by the image of the canonical element $$\sigma \in \Gamma(X, \omega \otimes(\check{\rho}(\omega)(-D) \times^T \mathfrak{n}))$$ along $$\Gamma(X, \omega \otimes(\check{\rho}(\omega)(-D)\times^T \mathfrak{n})) \rightarrow \Gamma(X, \omega \otimes(\check{\rho}(\omega)(-D)\times^T \mathfrak{g})).$$
\end{ex}

\begin{pr}\label{nilpintersections}
	If the intersection of ${\Nilp}^{\reg, \check{\mu}}$ and $\widetilde{\Nilp}^{\reg, \check{\lambda}}$ inside of ${\Nilp}^{\reg}_{\redu}$ is non-empty, then $\check{\mu} \leq \check{\lambda}$.
\end{pr}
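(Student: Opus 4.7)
The plan is to degenerate a generic point of $\widetilde{\Nilp}^{\reg, \check{\mu}}$ to a point $p$ of $\widetilde{\Nilp}^{\reg, \check{\lambda}}$ along a trait and to compare the two resulting canonical $B$-reductions via a Pl\"ucker-style argument. I would choose $\cD = \Spec R$ for a discrete valuation ring with generic point $\eta$ and closed point $s$, together with a map $\cD \to \Nilp^{\reg}_{\redu}$ sending $\eta$ into $\widetilde{\Nilp}^{\reg, \check{\mu}}$ and $s$ to $p$. The data amount to a $G$-bundle $\cF_G$ on $\cD \times X$ with a Higgs field $\phi$ that is regular nilpotent on a dense open $V \subseteq \cD \times X$ meeting each fiber densely. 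Since the stabilizer in $G$ of a regular nilpotent element is contained in a unique Borel, the $B$-reduction on $V$ is uniquely determined by $\phi$, and extending along each fiber of $\cD$ by properness of $G/B$ yields canonical $B$-reductions $\cF_B^\eta, \cF_B^s$ of $T$-degrees $\check{\mu}$ and $\check{\lambda}$.

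Fix a dominant weight $\bar{\nu}$. Via Pl\"ucker, $\cF_B^\eta$ and $\cF_B^s$ produce saturated line subbundles $\cL^{\bar{\nu}}_\eta \hookrightarrow \cV^{\bar{\nu}}_{\cF_G}|_\eta$ and $\cL^{\bar{\nu}}_s \hookrightarrow \cV^{\bar{\nu}}_{\cF_G}|_s$ of degrees $\langle \bar{\nu}, \check{\mu}\rangle$ and $\langle \bar{\nu}, \check{\lambda}\rangle$. I would then extend $\cL^{\bar{\nu}}_\eta$ across the family by invoking properness of the relative Quot scheme of rank-one sub-coherent-sheaves of $\cV^{\bar{\nu}}$ of fixed Hilbert polynomial over $\cD$, producing a flat family $\tilde{\cL}^{\bar{\nu}} \hookrightarrow \cV^{\bar{\nu}}$ on $\cD \times X$. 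Its restriction to the special fiber has numerical degree $\langle \bar{\nu}, \check{\mu}\rangle$ but may fail to be saturated; denoting by $\tilde{\cL}^{\bar{\nu}, \operatorname{sat}}|_s \subseteq \cV^{\bar{\nu}}|_s$ its saturation, this is a genuine line subbundle whose degree exceeds $\langle \bar{\nu}, \check{\mu}\rangle$ by the length of the torsion appearing in the quotient $\cV^{\bar{\nu}}|_s/\tilde{\cL}^{\bar{\nu}}|_s$.

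The last step identifies $\tilde{\cL}^{\bar{\nu}, \operatorname{sat}}|_s$ with $\cL^{\bar{\nu}}_s$. On $V$ the $B$-reduction is uniquely determined by $\phi$, so $\tilde{\cL}^{\bar{\nu}}|_V$ agrees, after saturation, with the Pl\"ucker subbundle of the canonical reduction; in particular $\tilde{\cL}^{\bar{\nu}, \operatorname{sat}}|_s$ and $\cL^{\bar{\nu}}_s$ agree on the dense open $V \cap (\{s\} \times X)$ of the smooth curve $\{s\} \times X$. Since both are saturated line subbundles of $\cV^{\bar{\nu}}|_s$, this forces them to coincide everywhere. Comparing degrees gives $\langle \bar{\nu}, \check{\lambda}\rangle \geq \langle \bar{\nu}, \check{\mu}\rangle$ for every dominant $\bar{\nu}$, which is exactly $\check{\mu} \leq \check{\lambda}$. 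The main subtlety I anticipate is that one must extend as a sub-coherent-sheaf rather than as a sub-bundle (for which no properness is available); the $T$-degree jump between the two fibers is then precisely the length of the torsion introduced by the coherent extension, and the Pl\"ucker comparison packages these jumps into the dominance inequality.
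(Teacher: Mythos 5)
Your proposal is correct and is essentially the paper's own argument: both reduce to a DVR $S \to \Nilp^{\reg}$, use the canonical generic $B$-reduction coming from regular nilpotence, extend it across the family by properness, and compare $T$-degrees via saturation. The only difference is that the paper cites properness of $\overline{\Bun}_B \to \Bun_G^{B\text{-gen}}$ as a black box, whereas you unfold that properness into the explicit Quot-scheme/Pl\"ucker/saturation argument that underlies it.
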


\begin{proof}
	Let $S$ be a spectrum of a DVR with generic point $\eta$ and closed point $s$. We need to show that for a morphism 
	$$f: S \rightarrow {\Nilp}^{\reg}$$ such that 
	$f_{\eta} \in \widetilde{\Nilp}^{\reg, \check{\mu}}$ and $f_s \in \widetilde{\Nilp}^{\reg, \check{\lambda}}$ we have $\check{\mu} \leq \check{\lambda}$.
	
	Consider the commutative diagram 
		\[
	\begin{tikzcd}
	\widetilde{\Nilp}^{\reg}\ar[r, rightarrow, ""']\ar[d, rightarrow, "\iota_{\cN}"']&   	\Bun_B \ar[d, rightarrow, "\iota"']  \\
	{\Nilp}^{\reg}\ar[r, rightarrow, ""']&  \Bun_G^{B-\operatorname{gen}},
	\end{tikzcd}
	\]
	where $\Bun_G^{B-\operatorname{gen}} := \Maps(X, \pt/B \rightarrow \pt/G),$ i.e. the moduli stack of $G$-bundles with generic $B$-reduction. Note that $\iota$ is a bijection on field-valued points. For a field-valued point $p \in \Bun_G^{B-\operatorname{gen}}$, we will say that $p$ is of {\it degree} $\check{\alpha}$ if the corresponding point of $\Bun_B$ lies in $\Bun_B^{\check{\alpha}}$. Note that the morphism $${\Nilp}^{\reg} \rightarrow \Bun_G^{B-\operatorname{gen}}$$ maps field-valued points of $\widetilde{\Nilp}^{\reg, \check{\alpha}} \subset {\Nilp}^{\reg}$ to points of degree $ \check{\alpha}$.
	
	Hence it suffices to show that for the induced morphism 
	$$f: S \rightarrow {\Nilp}^{\reg} \rightarrow \Bun_G^{B-\operatorname{gen}}$$
	we have $\deg(f_{\eta}) \leq \deg(f_s)$. We obtain the diagram 
		\[
	\begin{tikzcd}
	\eta \ar[r, rightarrow, ""']\ar[dd, rightarrow, ""'] & \Bun_B^{\check{\mu}} \ar[d, rightarrow, ""']\\
		&   	\overline{\Bun}_B \times_{\Bun_T} {\Bun_T}^{\check{\mu}} \ar[d, rightarrow, ""']  \\
	S\ar[r, rightarrow, "f"'] \ar[ru, rightarrow, ""']&  \Bun_G^{B-\operatorname{gen}},
	\end{tikzcd}
	\]
	where $\overline{\Bun}_B$ as in \cite[1.2.1]{BG}. The map $f$ lifts to $$S \rightarrow \overline{\Bun}_B \times_{\Bun_T} {\Bun_T}^{\check{\mu}}$$ since $\overline{\Bun}_B \rightarrow \Bun_G^{B-\operatorname{gen}}$ is proper. Finally, note that field-valued points in the image of $$	\overline{\Bun}_B \times_{\Bun_T} {\Bun_T}^{\check{\mu}}$$ in $\Bun_G^{B-\operatorname{gen}}$ have degree $\geq \check{\mu}.$
\end{proof}

\begin{ntn}
	Denote by $\underline{\Nilp}_{\redu}^{\reg, \check{\lambda}}$ the union of $\Nilp^{\reg, \check{\mu}}$ such that $\check{\mu}$ is not less than $\check{\lambda}$.
Set
$$\Nilp_{\redu}^{\reg, <\check{\lambda}} =\Nilp_{\redu}^{\reg} \setminus \underline{\Nilp}_{\redu}^{\reg, \check{\lambda}}.$$
\end{ntn}

\begin{rem}
	By Lemma \ref{nilpintersections} the map $$\underline{\Nilp}_{\redu}^{\reg, \check{\lambda}} \rightarrow \Nilp_{\redu}^{\reg}$$ is a closed embedding.
\end{rem}

\begin{ntn}\label{Nilpgeq}
	Let $$\underline{\Nilp}_{\redu}^{\check{\lambda}}:=\Nilp_{\redu} \setminus \Nilp_{\redu}^{\reg, <\check{\lambda}} \hookrightarrow \Nilp_{\redu}.$$
\end{ntn}

\section{Twisted Whittaker coefficients as microstalks.}\label{NT}

\subsection{Twisted version of Nadler-Taylor theorem.} Let $Y$ be a scheme with a $\bG_m$-action, denote by $Y^0$ the fixed locus.  Let $y_0$ be a fixed point and $i: Y^{>0}_{y_0} \rightarrow Y$ the attracting locus of $y_0$, and let $Y^{\leq 0}$ be the repelling locus. Let $\cL$ be a $\bG_m$-equivariant line bundle on $Y$ such that $i^*\cL$ is canonically trivial. Let  $\widetilde{Y}$ is the total space of $\cL$ with the zero section removed. Let $\cG$ be a twisting on $Y$, such that its pullback to $\widetilde{Y}$ is canonically trivial. 

\subsubsection{} Let $f: Y^{>0}_{y_0} \rightarrow \bA^1$ be a $\bG_m$-equivariant function of $Y^{>0}_{y_0}$, where the action of $\bG_m$ on $\bA^1$ is linear with some weight. Using local coordinates choose (a germ near $y_0$ of) a real-valued smooth function
$$F: Y \rightarrow \bR,$$
such that 
\begin{enumerate}
	\item $F|_{Y^{>0}_{y_0} } = \re f$
	\item $F|_{Y^{\leq 0} \setminus y_0} < 0$.
\end{enumerate}

\begin{thm}\cite[Theorem 2.2.2, Proposition 2.3.1]{NT}\label{extension of NT}
	Assume that $\cL$ has a property that the diagram 
	\[
\begin{tikzcd}
(\widetilde{Y})^{0}\ar[r, rightarrow, ""']\ar[d, rightarrow, ""']&   	\widetilde{Y}\ar[d, rightarrow, "\pi"']  \\
Y^{0}\ar[r, rightarrow, ""']&   Y
\end{tikzcd}
\]
is Cartesian, where $\widetilde{Y}$ is the total space of $\cL$ with the zero section removed. Let $\widetilde{F} = \pi \circ F$ and choose a lift $\widetilde{y}_0 \in (\widetilde{Y})^{0}$ of $y_0$. Then there is a canonical isomorphism of functors 
\begin{equation}\label{extension of NT functors}
	\phi_{f, y_0} \circ i^! \xrightarrow{\sim} \phi^{\cG}_{F, {y}_0} : \Shv_{\cG, \Lambda}^{\bG_m-\eq}(Y) \rightarrow \Vect.
\end{equation}

Moreover, if the shifted conormal $T^*_{Y^{>0}_{y_0}} Y +df$ intersects $\Lambda$ cleanly along smooth locus and dimension of the clean intersection is bounded above by the dimension of $Y^{\leq 0}$, then we can choose $F$ such that $\phi^{\cG}_{F, {y}_0}$ is a shifted twisted microstalk. 
\end{thm}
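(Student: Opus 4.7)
The strategy is to reduce everything to the untwisted theorem of Nadler--Taylor by passing to $\widetilde{Y}$, where by hypothesis $\pi^!\cG$ acquires a canonical trivialization. The crux is that the Cartesian square together with this trivialization identifies the $\bG_m$-equivariant twisted category $\Shv_{\cG}^{\bG_m-\eq}(Y)$ with the full subcategory of $\Shv^{\bG_m-\eq}(\widetilde Y)$ cut out by the weight character coming from the $\bG_m$-action on the fibers of $\cL$ (relative to the trivialization). Pullback $\pi^!$ and descent along $\pi$ realize this equivalence, so it suffices to establish both assertions on $\widetilde Y$ in the weight-graded sense.

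For the first assertion, I would pull back the $\bG_m$-equivariant data: the attracting locus $Y^{>0}_{y_0}$ has preimage $\widetilde{Y}^{>0}_{\widetilde y_0}$ (by the Cartesian hypothesis applied also to attracting/repelling loci, which follows from the hypothesis on fixed loci combined with $\bG_m$-equivariance of $\pi$), the function $f$ pulls back to a $\bG_m$-equivariant function $\widetilde f = f \circ \pi$, and $F$ lifts to $\widetilde F = F \circ \pi$, still satisfying the two conditions on the attracting/repelling loci. Applying \cite[Theorem 2.2.2]{NT} on $\widetilde{Y}$ yields a canonical isomorphism
\begin{equation}
\phi_{\widetilde f, \widetilde y_0}\circ \widetilde{i}^! \xrightarrow{\sim} \phi_{\widetilde F, \widetilde y_0}.
\end{equation}
Both sides are $\bG_m$-equivariant, and restriction to the appropriate weight component descends to the claimed isomorphism (\ref{extension of NT functors}) on $Y$ by smooth base change for $\pi$ and the fact that $!$-restriction, vanishing cycles, and the attracting-locus functor all commute with smooth pullback. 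The resulting identification is canonical because it does not depend on the choice of lift $\widetilde y_0$: two such lifts differ by the $\bG_m$-action, which acts trivially on the weight component after taking equivariance into account.

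For the second assertion, I would observe that since $\pi$ is smooth and $\bG_m$-equivariant, the shifted conormal $T^*_{Y^{>0}_{y_0}}Y + df$ pulls back, in the sense of $df \circ \pi^{-1}$, to the corresponding shifted conormal on $\widetilde Y$, and the singular support $\Lambda$ pulls back to a Lagrangian of the same clean intersection type (the clean intersection condition and the dimension bound are preserved verbatim, since the fibers of $\pi$ are one-dimensional and trivially intersect the relevant loci). Invoking \cite[Proposition 2.3.1]{NT} on $\widetilde{Y}$ produces a choice of $\widetilde F$ realizing $\phi_{\widetilde F, \widetilde y_0}$ as an untwisted shifted microstalk on $\widetilde{Y}$; averaging down the $\bG_m$-direction of weight corresponding to the trivialization yields the desired twisted microstalk on $Y$, with the same Maslov-index shift since the symplectic form and the three Lagrangians involved pull back cleanly along the smooth map $\pi$ on cotangent bundles.

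The main obstacle I anticipate is bookkeeping: verifying carefully that the canonical trivialization of $\pi^!\cG$, the $\bG_m$-weight, and the Maslov index on $\widetilde Y$ match the intrinsic invariants on $Y$, so that the twisted microstalk shift on $Y$ is indeed unambiguous and agrees with the one obtained from the untwisted picture on $\widetilde Y$. In particular one must check that the weight-component projection commutes with all of the functorial constructions (attraction, vanishing cycles, microstalk), which is essentially a consequence of the $\bG_m$-equivariance of every map in sight but requires some care at the level of $\infty$-categorical coherence.
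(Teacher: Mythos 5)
Your overall strategy — pass to $\widetilde{Y}$, where the gerbe trivializes, and invoke the untwisted Nadler--Taylor theorem there — is the same as the paper's. But there is a genuine gap where you gloss over the geometry of the $\bG_m$-action on $\widetilde{Y}$, and the statement you assert there is actually wrong.

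You claim that "the attracting locus $Y^{>0}_{y_0}$ has preimage $\widetilde{Y}^{>0}_{\widetilde{y}_0}$," and that this, together with the analogous statement for the repelling locus, "follows from the hypothesis on fixed loci combined with $\bG_m$-equivariance of $\pi$." Neither half is correct as stated. The preimage $\pi^{-1}(Y^{>0}_{y_0})$ is a $\bG_m$-bundle over $Y^{>0}_{y_0}$, not the attracting locus of a single point; what is true is that $\widetilde{Y}^{>0}_{\widetilde{y}_0} \to Y^{>0}_{y_0}$ is an isomorphism, i.e.\ the attracting locus upstairs is a \emph{section} of $\pi$ over the attracting locus downstairs. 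This follows from the $\bA^1$-test description of attracting loci together with the Cartesian hypothesis, which forces the $\bG_m$-equivariant line bundle $s^*\cL$ on any attracting disk $s:\bA^1 \to Y$ to have weight zero. Crucially, the repelling locus behaves differently: $(\widetilde{Y})^{\leq 0} \cong \mathrm{Tot}(\cL|_{Y^{\leq 0}}) \setminus Y^{\leq 0}$, a full $\bG_m$-bundle over $Y^{\leq 0}$, not a section. This asymmetry between attracting and repelling loci is the heart of the argument: the first identification gives $\phi_{f,y_0}\circ i^! \cong \phi_{f,y_0}\circ \widetilde{i}^!\circ\pi^!$, while the second is what lets you verify the sign condition on $\widetilde{F}=F\circ\pi$ needed to apply \cite[Theorem 2.2.2]{NT} on $\widetilde{Y}$, and finally $\phi^{\cG}_{F,y_0}\cong\phi_{\widetilde{F},\widetilde{y}_0}\circ\pi^!$. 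You assert the hypothesis on $\widetilde{F}$ "still satisfying the two conditions" without establishing this repelling-locus structure, which is exactly where the work lies.

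A secondary remark: your "weight component" descent mechanism (identifying $\Shv^{\bG_m\text{-}\eq}_{\cG}(Y)$ with a weight subcategory of $\Shv^{\bG_m\text{-}\eq}(\widetilde{Y})$) is a reasonable idea in principle, but the paper avoids it entirely by working directly with $\pi^!$. Since $\pi$ is smooth, $\pi^!$ is already a well-behaved functor, and the comparison of vanishing cycles and attracting-locus functors goes through without any need to project onto a weight component. Your approach introduces coherence issues (which you yourself flag) that the paper simply sidesteps. Similarly, for the microstalk assertion, the paper's argument is a one-line localization to an open neighborhood where $\cG$ is trivial, invoking \cite[Proposition 2.3.1]{NT} directly; your "averaging down the $\bG_m$-direction" is a more involved route that would require additional justification (e.g.\ that averaging preserves the microstalk property and the Maslov shift).

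To repair the proposal: replace the "preimage" claims with the precise statements $(\widetilde{Y})^{>0}_{\widetilde{y}_0}\xrightarrow{\sim} Y^{>0}_{y_0}$ and $(\widetilde{Y})^{\leq 0}\cong\mathrm{Tot}(\cL|_{Y^{\leq 0}})\setminus Y^{\leq 0}$ and prove them by the $\bA^1$- and $\bA^1$-with-inverse-action tests; then derive the two functorial identifications via $\pi^!$ as above, and verify the hypothesis on $\widetilde{F}$ from the repelling-locus identification. For the microstalk claim, localize.
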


\begin{proof}
	Let us construct (\ref{extension of NT functors}). First we will show that the natural map 
	\begin{equation}\label{attracktors to points}
		(\widetilde{Y})^{>0}_{\widetilde{y}_0} \xrightarrow{} Y^{>0}_{y_0}.
	\end{equation}
	is an isomorphism. It suffices to check that (\ref{attracktors to points}) is a bijection on fibers. A point of the target of (\ref{attracktors to points}) is by definition a $\bG_m$-equivariant map $s: \bA^1 \rightarrow Y$ such that $0 \in \bA^1$ maps to $y_0$. Without loss of generality we can replace $Y$ by this $\bA^1$ and $\cL$ by $s^* \cL$. But now we are done because $s^* \cL$ is trivial as a $\bG_m$-equivariant line bundle on $\bA^1$ and there exists a unique $\bG_m$-equivariant section of $\widetilde{\bA^1} \rightarrow \bA^1$ such that $0 \in \bA^1$ maps to $\widetilde{y}_0$.
	Next, we show that 
	\begin{equation}\label{attractors}
		(\widetilde{Y})^{\leq 0} \cong \Tot(\cL|_{Y^{\leq 0}}) - Y^{\leq 0},
	\end{equation}
	where on the RHS we have the total space with the zero section removed. Note that $$\Tot(\cL|_{Y^{\leq 0}}) - Y^{\leq 0} \cong \widetilde{Y} \times_Y Y^{\leq 0}.$$
	Then for a test affine scheme $S$ a point of this fiber product is the data of 
\begin{equation}\label{diagr}
	\begin{tikzcd}
	S\ar[r, rightarrow, "x_0"']\ar[d, hookrightarrow, "0"']&   	\widetilde{Y}\ar[d, rightarrow, "\pi"']  \\
	\bA^1_S\ar[r, rightarrow, "x"']&   Y
	\end{tikzcd}
\end{equation}
for a $\bG_m$-equivariant $x$. We want to show that there exists a unique $\bG_m$-equivariant $$\widetilde{x}: \bA^1_S \rightarrow \widetilde{Y}$$
	fitting in the diagram (\ref{diagr}). Indeed, note that $x_0$ is $\bG_m$-equivariant. We also know that $x^*\cL$ is trivial as a $\bG_m$-equivariant line bundle, hence there exists a unique $\bG_m$-equivariant section of $x^*\cL$ with prescribed value on $S \xrightarrow{0} \bA^1$, giving the desired $\widetilde{x}$.
	
	Equation (\ref{attracktors to points}) gives that 
	\begin{equation}
		\phi_{f, y_0} \circ i^! \cong 	\phi_{f, y_0}  \circ \widetilde{i}^!\circ\pi^!, 
	\end{equation}
	where $$\widetilde{i}: (\widetilde{Y})^{>0}_{\widetilde{y}_0} \rightarrow \widetilde{Y}.$$
		Equation (\ref{attractors}) gives that $\widetilde{F}|_{\widetilde{Y}^{\leq 0} \setminus \widetilde{y}_0} < 0$, hence by \cite[Theorem 2.2.2] {NT} we get that 
	$$\phi_{f, y_0}  \circ \widetilde{i}^! \xrightarrow{\sim}  \phi_{\widetilde{F}, \widetilde{y}_0}: \Shv_{\widetilde{\Lambda}}^{\bG_m-\eq}(\widetilde{Y}) \rightarrow \Vect,$$
	where $\widetilde{\Lambda} = d \pi (\pi^{-1} \Lambda)$. This concludes construction of (\ref{extension of NT functors}) since $\phi^{\cG}_{F, {y}_0} \cong \phi_{\widetilde{F}, \widetilde{y}_0} \circ \pi^!$. 
	
	We now turn to the second statement of the theorem. But the question is local, so it reduces to the case when the gerbe is trivial, which is \cite[Proposition 2.3.1]{NT}.
 \end{proof}

Combining with Lemma \ref{properties of ms}, we get
\begin{cor}
	The functor $\phi_{f, y_0} \circ i^! $ is t-exact and commutes with Verdier duality up to a shift.
\end{cor}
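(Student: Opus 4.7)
The plan is to derive the corollary directly from Theorem~\ref{extension of NT} combined with Lemma~\ref{properties of ms}, with no new geometric input required. Under the clean intersection hypothesis presupposed here, the theorem supplies a canonical equivalence
\[
\phi_{f, y_0} \circ i^! \;\xrightarrow{\sim}\; \phi^{\cG}_{F, y_0} \colon \Shv^{\bG_m-\eq}_{\cG, \Lambda}(Y) \to \Vect,
\]
and moreover allows one to choose $F$ so that the right-hand side coincides with the twisted microstalk $m_{\xi_0}$ at the (unique) smooth point of intersection $\xi_0 \in \Lambda \cap (T^*_{Y^{>0}_{y_0}} Y + df)$, up to a cohomological shift by the corresponding Maslov index $\ind/2$.

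Granted this identification, I would invoke Lemma~\ref{properties of ms}, which asserts that the twisted microstalk $m_{\xi_0}$ is t-exact and commutes with Verdier duality. Both of these properties are stable under an overall cohomological shift, provided one is willing to tolerate a matching shift on the other side: t-exactness becomes t-exactness up to a shift by $\ind/2$, and compatibility with $\bD$ becomes compatibility up to the same shift (since Verdier duality converts a shift $[n]$ into $[-n]$, which is absorbed into the ``up to a shift'' qualifier in the statement). Transporting along the canonical isomorphism of Theorem~\ref{extension of NT} then yields the claim for $\phi_{f, y_0} \circ i^!$.

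There is no substantive obstacle at this stage; the real work has already been done upstream. All of the geometric content (reducing the attracting-cycle/vanishing-cycle composition to a genuine microstalk via the auxiliary line bundle $\cL$ and the passage to $\widetilde{Y}$) sits inside the proof of Theorem~\ref{extension of NT}, while the analytic/sheaf-theoretic content (t-exactness and Verdier self-duality of twisted microstalks, proved by trivializing $\cG$ on a small open neighborhood of $p(\xi_0)$ and reducing to the untwisted statement of Kashiwara--Schapira) sits inside the proof of Lemma~\ref{properties of ms}. The corollary itself is thus just the bookkeeping step that records the consequence for the attracting-cycle form of the functor, which is the form that will be applied in the subsequent sections to the Whittaker coefficient functors $\coeff_D$.
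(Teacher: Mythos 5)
Your proposal is correct and matches the paper's argument exactly: the paper states the corollary immediately after Theorem~\ref{extension of NT} with the one-line justification ``Combining with Lemma~\ref{properties of ms}, we get,'' which is precisely the combination you spell out. Your accounting for the cohomological shift (and how it interacts with $\bD$) fills in the bookkeeping the paper leaves implicit, and your remark that all substantive content lives in the proofs of the theorem and lemma is accurate.
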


\subsection{Hyperbolic symmetry.} Fix a $\check{\Lambda}^+$-divisor $D$ of degree $\check{\lambda}$ on $X$ and a closed point $x \in X$ disjoint from the support of $D$. 

\begin{ntn}
	Let $\Bun_{G, N}^{\omega(-D)}(X, nx)$ be the stack classifying $G$-bundles on $X$ with a structure of $B$-reduction on the $n$-th neighborhood $D_{n}(x)$ of $x$, such that the induced $T$-bundle is isomorphic to $\check{\rho}(\omega)(-D)|_{D_{n}(x)}$. 
\end{ntn}

Note that for $n$ large enough the natural map 
$$i: \Bun_{N}^{\omega(-D)}(X) \rightarrow \Bun_G$$
factors through $\Bun_{G, N}^{\omega(-D)}(X, nx).$
\begin{ntn}
For $H = G$, $B$, $B^-$ or $T$, let $\Bun_{H}^{\omega(-D)}(X, nx)$ be the stack classifying $H$-bundles on $X$ with a reduction on the $n$-th order neighborhood $D_{n}(x)$ of $x$ to $\check{\rho}(\omega)(-D)$ via $T \subset H$. 
\end{ntn}

\begin{ntn}
Denote by $\Bun_{N}^{\omega(-D)}(X, nx)$ the fiber product 
\begin{equation}\label{players}
\begin{tikzcd}
\Bun_{N}^{\omega(-D)}(X, nx)\ar[r, rightarrow, "\alpha^{\prime}"]\ar[d, rightarrow, ""']&   	\Bun_{G}^{\omega(-D)}(X, nx)\ar[d, rightarrow, ""']  \\
\Bun_{N}^{\omega(-D)}(X)\ar[r, rightarrow, "\alpha"']&   \Bun_{G, N}^{\omega(-D)}(X, nx).
\end{tikzcd}
\end{equation}
\end{ntn}

\begin{cnstr}\label{action}
	Let $z \in \bG_m$ act on $G$ and $B$ by conjugation by $\check{\rho}(z) \in T$. This gives a $\bG_m$-action on each entree of the diagram (\ref{players}) such that all arrows are $\bG_m$-equivariant.
\end{cnstr}

\begin{ntn}
	Set $$\Bun_{G} \xleftarrow{p} \Bun_B \xrightarrow{q} \Bun_T.$$
\end{ntn}

\begin{ntn}
	Let $U_G \subset \Bun_G$ be a quasi-compact open containing $p(q^{-1} \Bun_T^{(2g-2)\check{\rho}-\deg(D)})$. 
\end{ntn}

By \cite[4.7.2]{DG} there exists a $\bG_m$-invariant open $$W \subset U_G \times_{\Bun_G}  \Bun_{G}^{\omega(-D)}(X, nx)$$ containing the image of $\Bun_{T}^{\omega(-D)}(X, nx)^{(2g-2)\check{\rho}-\deg(D)}$,  such that 
$$W^0 \cong \Bun_{T}^{\omega(-D)}(X, nx)^{(2g-2)\check{\rho}-\deg(D)},$$
$$W^+ \cong \Bun_{B}^{\omega(-D)}(X, nx)^{(2g-2)\check{\rho}-\deg(D)},$$
$$W^- \cong \Bun_{B^-}^{\omega(-D)}(X, nx)^{(2g-2)\check{\rho}-\deg(D)}.$$

\begin{pr}
	For the $\bG_m$-action defined in Construction \ref{action}, 
	$$\Bun_{N}^{\omega(-D)}(X, nx) \rightarrow W$$
	is the attracting locus for $$\check{\rho}(\omega)(-D) \in  \Bun_{T}^{\omega(-D)}(X, nx)^{(2g-2)\check{\rho}-\deg(D)}.$$
\end{pr}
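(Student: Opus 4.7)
\medskip

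The plan is to obtain this as an immediate consequence of the hyperbolic description of $W$ recalled just before the statement, by restricting to a single fixed point of the $\bG_m$-action. Since $W^0$ and $W^{\pm}$ have already been identified with components of $\Bun_T^{\omega(-D)}(X,nx)$, $\Bun_B^{\omega(-D)}(X,nx)$ and $\Bun_{B^-}^{\omega(-D)}(X,nx)$, the remaining content is entirely bookkeeping.

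First I would make explicit how the contraction map $W^+\to W^0$ looks under these identifications. On the Lie algebra side, conjugation by $\check\rho(z)$ acts on each simple root space $\mathfrak{g}_{\alpha_i}\subset\mathfrak{n}$ with strictly positive weight $\langle\alpha_i,\check\rho\rangle>0$ and trivially on $\mathfrak{t}$; therefore the induced $\bG_m$-action on $\Bun_B^{\omega(-D)}(X,nx)$ contracts the unipotent part $N$ to the identity and fixes the $T$-quotient. Thus the limit map $W^+\to W^0$ corresponds, on the component of degree $(2g-2)\check\rho-\deg(D)$, to the Levi projection
$$\Bun_B^{\omega(-D)}(X,nx)^{(2g-2)\check\rho-\deg(D)}\longrightarrow\Bun_T^{\omega(-D)}(X,nx)^{(2g-2)\check\rho-\deg(D)}.$$

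Next, for any $\bG_m$-action the attracting locus at a single fixed point $w_0\in W^0$ is by definition the fiber of $W^+\to W^0$ over $w_0$. Specializing to $w_0=\check\rho(\omega)(-D)$, this attracting locus is the fiber product
$$\Bun_B^{\omega(-D)}(X,nx)^{(2g-2)\check\rho-\deg(D)}\times_{\Bun_T^{\omega(-D)}(X,nx)}\{\check\rho(\omega)(-D)\}.$$
I would then match this fiber product with the defining Cartesian square \eqref{players} for $\Bun_N^{\omega(-D)}(X,nx)$: a point of either stack is the datum of a $B$-bundle on $X$ whose induced $T$-bundle is globally $\check\rho(\omega)(-D)$, together with compatible level structure on $D_n(x)$. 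The compatibility on $D_n(x)$ is automatic, because once the $T$-quotient is globally $\check\rho(\omega)(-D)$ its restriction to $D_n(x)$ is the prescribed one.

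I do not expect any substantive obstacle; the whole proposition is a specialization of the global hyperbolic identification already borrowed from \cite[4.7.2]{DG}. The only point that deserves care is the weight calculation showing that $\check\rho$ contracts $N$ to the identity (which forces the attracting direction to coincide with $B\to T$ rather than $B^-\to T$), and the verification that the level structures on the two sides of the Cartesian comparison genuinely agree; both are routine.
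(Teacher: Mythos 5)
Your proposal is correct and follows essentially the same route as the paper: identify the attracting locus at a fixed point as the fiber of the contraction map $W^+\to W^0$, observe that under the identifications with $\Bun_B^{\omega(-D)}(X,nx)$ and $\Bun_T^{\omega(-D)}(X,nx)$ this contraction is the Levi projection $B\twoheadrightarrow T$, and then read off the fiber. You merely spell out two things the paper treats as immediate, namely the sign of the weights $\langle\alpha_i,\check\rho\rangle>0$ forcing $N$ to contract, and the bookkeeping matching the fiber with the defining Cartesian square for $\Bun_N^{\omega(-D)}(X,nx)$.
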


\begin{proof}
	The action of $0 \in \bA^1$ defines a map $q_W: W^+ \rightarrow W^0$, which together with the forgetful map $i_W: W^0 \rightarrow W^+$ defines a retraction 
	\begin{equation}\label{retraction}
		W^0 \xrightarrow{i_W} W^+ \xrightarrow{q_W} W^0.
	\end{equation}
	Then by definition the attracting locus for some $w \in W^0$ is $W^+ \times_{W^0} w$.
	
	It is easy to see that (\ref{retraction}) identifies with 
	\begin{equation}
		\Bun_{T}^{\omega(-D)}(X, nx)^{(2g-2)\check{\rho}-\deg(D)} \rightarrow \Bun_{B}^{\omega(-D)}(X, nx)^{(2g-2)\check{\rho}-\deg(D)} \rightarrow \Bun_{T}^{\omega(-D)}(X, nx)^{(2g-2)\check{\rho}-\deg(D)}
	\end{equation}
	coming from $T \hookrightarrow B$ and $B \twoheadrightarrow T$, hence the result. 
\end{proof}

Recall from \cite[4.1.3]{FGV} that there exists a canonical function
$$\psi_D: \Bun_N^{\omega(-D)} \rightarrow \bA^1.$$

\begin{pr}
	The function $\psi_D$ is $\bG_m$-equivariant for the action on the source defined in Construction \ref{action}, and the action on the target by scaling. 
\end{pr}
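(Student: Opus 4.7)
The plan is to unwind the construction of $\psi_D$ from \cite[4.1.3]{FGV} and check that each ingredient has a definite weight under the $\bG_m$-action of Construction \ref{action}; the equivariance then follows by combining these weights, with the target action turning out to be scaling of weight one.

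First I would verify that the $\bG_m$-action preserves $\Bun_N^{\omega(-D)}$ and then compute the weight with which it acts on the relevant root data. By definition, $z \in \bG_m$ acts on $N$ by conjugation by $\check{\rho}(z) \in T$. Since $T$ is abelian, this conjugation is trivial on the quotient $B \twoheadrightarrow T$, hence the induced $T$-bundle of any point of $\Bun_N^{\omega(-D)}$ (canonically identified with $\check{\rho}(\omega)(-D)$) is preserved and the action is well-defined. Passing to the abelianization $N/[N,N] = \bigoplus_{i \in \cI} \mathfrak{g}_{\alpha_i}$, the induced action of $z$ on the simple root space $\mathfrak{g}_{\alpha_i}$ is multiplication by $\alpha_i(\check{\rho}(z)) = z^{\langle \alpha_i, \check{\rho}\rangle} = z$, using that $\check{\rho}$ pairs to one with every simple root.

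Second, I would recall the shape of $\psi_D$: for $\cF_N \in \Bun_N^{\omega(-D)}$, the projection $N \twoheadrightarrow N/[N,N]$ applied to $\cF_N$ produces, for each $i \in \cI$, a torsor for the line bundle $\omega(-\alpha_i(D))$ (the $\alpha_i$-twist of the underlying $T$-bundle $\check{\rho}(\omega)(-D)$), and $\psi_D$ is obtained from these torsors by the standard $k$-linear residue/sum construction of \cite{FGV}, which uses that the divisors $\alpha_i(D)$ are effective. The key feature I want to emphasize is that every step after the projection to $N/[N,N]$ is $k$-linear in the root-space coordinates on the nose.

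Combining these two observations, the $\bG_m$-action scales each $\mathfrak{g}_{\alpha_i}$-component of the torsor data by $z$, and the subsequent $k$-linear residue and summation operations respect this scaling; therefore $\psi_D(z\cdot \cF_N) = z \cdot \psi_D(\cF_N)$, which is precisely $\bG_m$-equivariance with the target $\bA^1$ acted on by scaling of weight one. No serious obstacle is expected, since the argument is a bookkeeping of weights; the only subtle points are to double check that Construction \ref{action} indeed preserves the $T$-bundle condition $\cF_T \cong \check{\rho}(\omega)(-D)$ (automatic since conjugation is trivial on $T$), and that the formula for $\psi_D$ genuinely factors through $N/[N,N]$ and is $k$-linear thereafter, which is immediate from the construction in \cite[4.1.3]{FGV}.
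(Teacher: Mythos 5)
Your proposal is correct and follows essentially the same route as the paper: the paper also unwinds $\psi_D$ into a sum of $k$-linear cohomological functionals indexed by simple roots, observes that the $\bG_m$-action scales the $\alpha_i$-root direction, and concludes equivariance; your explicit weight computation $\langle\alpha_i,\check\rho\rangle=1$ is the same fact the paper encodes by saying the induced action on $H^1(X,\cL_{\cF_T}^{\check\alpha_i})$ is ``multiplying a class by $g$.''
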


\begin{proof}
	To unburden the notation set $\cF_T:= \check{\rho}(\omega)(-D).$ For $$\check{\lambda}: T \rightarrow \bG_m$$
	denote by $\cL_{\cF_T}^{\check{\lambda}}$ the $\bG_m$-bundle induced from $\cF_T$. By definition, $\psi_D$ is the sum of the functions of the form
	\begin{equation}\label{char}
		\Bun_N^{\cF_T} \rightarrow \Bun_{\bG_a}^{\cL_{\cF_T}^{\check{\alpha}_i}, \cO} \cong H^1(X, \cL_{\cF_T}^{\check{\alpha}_i}) \times BH^0(X, \cL_{\cF_T}^{\check{\alpha}_i}) \rightarrow H^1(X, \cL_{\cF_T}^{\check{\alpha}_i}) \rightarrow H^1(X, \omega) \cong \bG_a.
	\end{equation}
	Let us check that each of these functions is $\bG_m$-equivariant. Indeed, the first map in (\ref{char}) is induced by a map $$B \rightarrow B_{\GL_2} \subset \Aut(\cV^i),$$
	which is $\bG_m$-equivariant for the action of $\bG_m$ given in Construction \ref{action}. Other maps in (\ref{char}) are $\bG_m$-equivariant by construction, with the action of $g \in \bG_m$ on $H^i(X, \cL_{\cF_T}^{\check{\alpha}_i})$ and $H^1(X, \omega)$ given by multiplying a class by $g$. 
	
\end{proof}

\subsection{Twisted Whittaker coefficients and microstalks.}
We want to apply Theorem \ref{extension of NT} to the case where $Y = W$, the $\bG_m$-action is as in Construction \ref{action}, and $\cL$ is pullback of the determinant line bundle on $\Bun_G$ (note that $\cL$ is $\bG_m$-equivariant since conjugation by  $\check{\rho}(z) \in T/Z(G)$ gives trivial action on $\Bun_G$). We take $\Lambda = \underline{\Nilp}_{\redu}^{\check{\lambda}}$ (see Notation \ref{Nilpgeq}). Let us check that conditions of Theorem \ref{extension of NT} are satisfied. 

\begin{lm}\label{W}
	The diagram 	\[
	\begin{tikzcd}
	(\widetilde{W})^{0}\ar[r, rightarrow, ""']\ar[d, rightarrow, ""']&   	\widetilde{W}\ar[d, rightarrow, "\pi"']  \\
	W^{0}\ar[r, rightarrow, ""']&   W
	\end{tikzcd}
	\]
	is Cartesian.
\end{lm}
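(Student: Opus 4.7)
The plan is to reduce the Cartesian condition to the vanishing of the $\bG_m$-weight on $\cL$ along $W^0$, and then verify this vanishing by exploiting the triviality of the conjugation action on $\Bun_G$.

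First, I would observe that since $\pi$ is $\bG_m$-equivariant, the inclusion $(\widetilde{W})^0 \subseteq \pi^{-1}(W^0)$ is automatic. The Cartesian condition is therefore equivalent to the reverse inclusion: every point of $\widetilde{W}$ lying over $W^0$ should be $\bG_m$-fixed. For $w \in W^0$, the fiber $\pi^{-1}(w) \cong \cL|_w \setminus \{0\}$, and $\bG_m$ acts on this fiber by multiplication by its weight on the line $\cL|_w$. Hence the lemma reduces to showing that this weight vanishes for every $w \in W^0$.

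Next, I would exploit the fact that $\cL$ is pulled back along the composition $W \to \Bun_G$, and that the $\bG_m$-action on $\Bun_G$ coming from the conjugation action of $\check{\rho}: \bG_m \to T \subset G$ is canonically $2$-isomorphic to the trivial action on the stack $\Bun_G$. Concretely, for any $G$-bundle $\cP$, right multiplication by $\check{\rho}(z)$ is an automorphism exhibiting an isomorphism between $\cP$ and its conjugation-twist, and these assemble coherently in $z$ into a canonical trivialization of the $\bG_m$-action on $\Bun_G$. The assumption that $G$ is adjoint ensures $\check{\rho}$ makes sense as a genuine cocharacter into $T = T/Z(G)$. Under this trivialization, the determinant line bundle on $\Bun_G$ acquires a canonical $\bG_m$-equivariant structure of weight zero at every point; pulling back to $W$ and restricting to $W^0$ then gives the desired vanishing of weight.

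The main subtlety is verifying that the $\bG_m$-equivariant structure on $\cL$ implicit in the formation of $\widetilde{W}$ coincides with the one induced by this canonical trivialization, as is indicated by the parenthetical remark preceding the lemma. Once this compatibility is established, the weight on $\cL|_w$ is manifestly trivial for each $w \in W^0$, so every point of $\pi^{-1}(W^0)$ is $\bG_m$-fixed and the diagram is Cartesian.
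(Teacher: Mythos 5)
Your proof is correct and follows essentially the same route as the paper's: both reduce Cartesianness to the triviality of the $\bG_m$-action on $\cL$ restricted to $W^0$, which the paper then asserts without elaboration. You go one step further in justifying that triviality (pullback of the determinant line from $\Bun_G$, where the adjoint-conjugation action is canonically $2$-isomorphic to the identity), which is the content behind the paper's parenthetical remark preceding the lemma.
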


\begin{proof}
	Since the action of $\bG_m$ preserves the zero section of $\cL$ it suffices to check that the diagram 
	\[
	\begin{tikzcd}
	\Tot(\cL)^{0}\ar[r, rightarrow, ""']\ar[d, rightarrow, ""']&   	\Tot(\cL)\ar[d, rightarrow, ""']  \\
	W^{0}\ar[r, rightarrow, ""']&   W
	\end{tikzcd}
	\]
	is Cartesian.
	
	Indeed, for a test scheme $S$ a map $S \rightarrow 	\Tot(\cL)^{0}$ is by definition a $\bG_m$-equivariant map $S \rightarrow 	\Tot(\cL)$. The latter gives via composition a $\bG_m$-equivariant map $S \rightarrow W$, which by definition factors through $W^0$. Hence giving a map $S \rightarrow 	\Tot(\cL)^{0}$ is equivalent to giving a $\bG_m$-equivariant map $S \rightarrow \Tot(\cL|_{W^0})$. It is left to notice that the action of $\bG_m$ on $\Tot(\cL|_{W^0})$ is trivial.
\end{proof}

%\begin{lm}\label{closed embedding}
%	The map $$\Bun_{N}^{\omega(D)}(X) \rightarrow W$$ is a closed embedding. 
%\end{lm}
%\begin{proof}
%	\red{content...}
%\end{proof}

Let us consider a restriction of the diagram (\ref{players}) to $U_G$, i.e. 
\begin{equation}\label{players2}
\begin{tikzcd}
\Bun_{N}^{\omega(-D)}(X, nx)\ar[r, rightarrow, "\alpha^{\prime}_{U_G}"]\ar[d, rightarrow, ""']&   	\Bun_{G}^{\omega(-D)}(X, nx)|_{U_G}\ar[d, rightarrow, ""']  \\
\Bun_{N}^{\omega(-D)}(X)\ar[r, rightarrow, "\alpha_{U_G}"']&   \Bun_{G, N}^{\omega(-D)}(X, nx)|_{U_G}.
\end{tikzcd}
\end{equation}

As explained in \cite[(3.12)]{NT} the map $\alpha^{\prime}_{U_G}$ is a locally closed embedding. Therefore $\alpha_{U_G}$ is a locally closed embedding as well, since the downward arrows in (\ref{players2}) are surjective.

\begin{pr}\label{transversal intersection 1}
	Inside $T^*\Bun_G$ the shifted conormal bundle $T^*_{\Bun_N^{\omega(-D)}(X)} \Bun_G(X) + d\psi_D$ intersects $\Lambda$ transversely at a smooth point.
\end{pr}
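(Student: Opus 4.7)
The plan is to first identify the shifted conormal bundle $T^*_{\Bun_N^{\omega(-D)}}\Bun_G + d\psi_D$ with the stack $\Kos_D$ introduced in Section 4.1, and then reduce transversality to the earlier intersection analysis combined with a dimension count. The identification is immediate from unwinding definitions: the conormal to $\Bun_N^{\omega(-D)} \hookrightarrow \Bun_G$ at a point $\cF_B$ is $H^0(X, \mathfrak{b}_{\cF_B}\otimes\omega)$ (using the Killing form to identify $\mathfrak{g}$ with $\mathfrak{g}^*$, under which $\mathfrak{n}^{\perp}=\mathfrak{b}$), and the standard computation of $d\psi_D$ shifts this affine space to $f + H^0(X, \mathfrak{b}_{\cF_B}\otimes\omega)$, which matches the point description of $\Kos_D$ in Remark \ref{kosD points}.

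Given this identification, I first show $\Kos_D \cap \Nilp^{\irreg} = \emptyset$. Any Higgs field $\varphi$ arising from a point of $\Kos_D$ factors fiberwise on $X$ through the classical Kostant slice $f+\mathfrak{b}\subset \mathfrak{g}^{\reg}$, so it lies in the regular locus pointwise and cannot define a ramified irregular Higgs bundle. Hence it suffices to study $\Kos_D \cap \underline{\Nilp}^{\reg,\check{\lambda}}_{\redu}$.

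Next I combine Lemma \ref{non-empty intersections} with Proposition \ref{nilpintersections}. The former forces any nonempty $\Kos_D \cap \widetilde{\Nilp}^{\reg,\check{\varphi}}$ to satisfy $\check{\varphi}\le\check{\lambda}$; the latter says the closure $\Nilp^{\reg,\check{\mu}}$ meets $\widetilde{\Nilp}^{\reg,\check{\varphi}}$ only for $\check{\mu}\le \check{\varphi}$. So any stratum closure $\Nilp^{\reg,\check{\mu}}$ contributing to $\Kos_D \cap \underline{\Nilp}^{\reg,\check{\lambda}}_{\redu}$ satisfies $\check{\mu}\le\check{\lambda}$, and combined with the defining condition $\check{\mu}\not<\check{\lambda}$ for membership in $\underline{\Nilp}^{\reg,\check{\lambda}}_{\redu}$, we conclude $\check{\mu}=\check{\lambda}$ and hence $\check{\varphi}=\check{\lambda}$. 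Thus the set-theoretic intersection reduces to $\Kos_D \cap \widetilde{\Nilp}^{\reg,\check{\lambda}}$, which by Lemma \ref{non-empty intersections} is the single reduced point $\lambda_D$. Since no other stratum from $\underline{\Nilp}^{\check{\lambda}}_{\redu}$ passes through $\lambda_D$, the point $\lambda_D$ lies in the smooth open substack $\widetilde{\Nilp}^{\reg,\check{\lambda}}$ of $\underline{\Nilp}^{\check{\lambda}}_{\redu}$ (Proposition \ref{2.5.4.1}), so it is a smooth point of $\Lambda$.

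Transversality at $\lambda_D$ is then a pure dimension argument, which I expect to be the easy final step rather than the main obstacle: both $\Kos_D$ and $\widetilde{\Nilp}^{\reg,\check{\lambda}}$ are smooth Lagrangian substacks of $T^*\Bun_G$ of dimension $\dim \Bun_G$, so $\dim T_{\lambda_D}\Kos_D + \dim T_{\lambda_D}\widetilde{\Nilp}^{\reg,\check{\lambda}} = \dim T^*\Bun_G$; reducedness of the scheme-theoretic intersection at $\lambda_D$ (Lemma \ref{non-empty intersections}) forces $T_{\lambda_D}\Kos_D \cap T_{\lambda_D}\widetilde{\Nilp}^{\reg,\check{\lambda}} = 0$, giving transversality. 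The only mildly delicate point in the argument is verifying that the closures $\Nilp^{\reg,\check{\mu}}$ actually determine the local structure of $\underline{\Nilp}^{\check{\lambda}}_{\redu}$ near $\lambda_D$, i.e.\ that the irregular locus $\Nilp^{\irreg}\subset\underline{\Nilp}^{\check{\lambda}}_{\redu}$ does not pass through $\lambda_D$; but this is immediate from $\lambda_D\in\widetilde{\Nilp}^{\reg,\check{\lambda}}\subset \Nilp^{\reg}$.
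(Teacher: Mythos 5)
Your proposal is correct and follows essentially the same route as the paper: identify the shifted conormal $T^*_{\Bun_N^{\omega(-D)}}\Bun_G + d\psi_D$ with $\Kos_D$ (the paper does this by directly matching point descriptions against Remark \ref{kosD points}), then invoke Lemma \ref{non-empty intersections} for the reduced-point intersection and Proposition \ref{nilpintersections} for smoothness of $\lambda_D$ in $\Lambda$. You are a bit more careful than the paper's terse proof in two places worth noting: you explicitly verify $\Kos_D\cap\Nilp^{\irreg}=\emptyset$ (stated without proof only in the paper's introduction), and you spell out the ``reduced zero-dimensional intersection of smooth Lagrangians implies transversality'' dimension count that the paper leaves implicit.
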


\begin{proof}
	Let us check that the shifted conormal bundle $$T^*_{\Bun_N^{\omega(-D)}(X)} \Bun_G(X) + d\psi_D := \Bun_N^{\omega(-D)}(X) \times_{T^* \Bun_N^{\omega(-D)}(X)} (T^*\Bun_G \times_{\Bun_G} \Bun_N^{\omega(-D)}(X))$$
	coincides with $\Kos_D$. For simplicity we will check that they coincide on the level of $k$-points, keeping in mind that the same argument works for $S$-points for any affine $S$. 
	
	We described points of $\Kos_D$ in Remark \ref{kosD points}. Let us describe points of the shifted conormal bundle. 
	By definition, $\Bun_N^{\omega(-D)}(X) $ parameterizes $B$-bundles $\cF_B$ such that induced $T$-bundle is $\check{\rho}(\omega)(-D)$. The stack $T^* \Bun_N^{\omega(-D)}(X)$ parameterizes such $B$-bundles $\cF_B$ and an element in $\Gamma(X, \omega \otimes \cF_B \times^B \mathfrak{n}^*)$. The map
	$$d\psi_D: \Bun_N^{\omega(-D)}(X) \rightarrow T^* \Bun_N^{\omega(-D)}(X)$$
	is given by $$\cF_B \mapsto (\cF_B, t),$$ where $t$ is the image of the canonical section $s_0$ under 
	\begin{equation}\label{section canonical}
		\Gamma(X, \oplus_{i \in \cI} \cO(\check{\alpha}_i(D))) \rightarrow \Gamma(X, \omega \otimes \cF_T \times^T (\mathfrak{n}/[\mathfrak{n}, \mathfrak{n}])^*) \rightarrow \Gamma(X, \omega \otimes \cF_B \times^B \mathfrak{n}^*).
	\end{equation}
	Finally, the stack $T^*\Bun_G \times_{\Bun_G} \Bun_N^{\omega(-D)}(X)$ parameterizes $B$-bundles $\cF_B$ with the prescribed induced $T$-bundle and a section in $\Gamma(X, \omega \otimes (\cF_B \times^B \mathfrak{g}^*))$. Note that the map 
	$$T^*\Bun_G \times_{\Bun_G} \Bun_N^{\omega(-D)}(X) \rightarrow T^* \Bun_N^{\omega(-D)}(X) $$ is induced by $\mathfrak{g}^* \rightarrow \mathfrak{n}^*$.
	Identifying $\mathfrak{g}^* \cong \mathfrak{g}$ we get that points of the shifted conormal bundle are given by 
	\begin{itemize}
		\item a $B$-bundle $\cF_B$ on $X$ such that the induced $T$-bundle $\cF_T \cong \check{\rho}(\omega)(-D)$,
		\item an element $s \in \Gamma(X, \omega \otimes \cF_B \times^B \mathfrak{g})$, such that its image in $\Gamma(X, \omega \otimes \cF_B \times^B \mathfrak{g}/\mathfrak{b})$ identifies with the image of $s_0$ under
		$$\Gamma(X, \oplus_{i \in \cI} \cO(\check{\alpha}_i(D))) \rightarrow \Gamma(X, \omega \otimes \cF_T \times^T \mathfrak{g}_{\geq -1}/\mathfrak{b}) \rightarrow \Gamma(X, \omega \otimes \cF_B \times^B \mathfrak{g}/\mathfrak{b}).$$
	\end{itemize}
	This coincides with the description in Remark \ref{kosD points}. Therefore the intersection is the point $\lambda_D$ by Lemma \ref{non-empty intersections}, and $\lambda_D \in \Lambda$ is smooth by Lemma \ref{nilpintersections}.
\end{proof}

\begin{pr}\label{transversal intersection 2}
	Inside $T^*\Bun_{G, N}^{\omega(-D)}(X, nx)|_{U_G}$ the shifted conormal $$\Lambda^{''}_{\psi_D}:=T^*_{\Bun_N^{\omega(-D)}(X)} \Bun_{G, N}^{\omega(-D)}(X, nx)|_{U_G}+ d\psi_D$$
	intersects $\Lambda \times_{\Bun_G(X)} \Bun_{G, N}^{\omega(-D)}(X, nx)|_{U_G}$ transversely at a smooth point.
\end{pr}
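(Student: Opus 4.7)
The strategy is to reduce the statement to Proposition \ref{transversal intersection 1} via the smoothness of the forgetful map $f\colon Y := \Bun_{G,N}^{\omega(-D)}(X,nx)|_{U_G} \to Z := \Bun_G|_{U_G}$. Writing $i\colon W := \Bun_N^{\omega(-D)}(X) \hookrightarrow Y$ for the locally closed embedding (so that $p = f \circ i$), the smoothness of $f$ yields a short exact sequence
\begin{equation*}
0 \to f^*T^*Z \to T^*Y \to T^*(Y/Z) \to 0,
\end{equation*}
and in particular a closed embedding $f^*T^*Z \hookrightarrow T^*Y$ through which $f^{-1}(\Lambda) = \Lambda \times_Z Y$ factors.

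The first step is to identify $\Lambda''_{\psi_D}$ in terms of $\Kos_D$. Working with the definition of the shifted conormal as the preimage of the graph of $d\psi_D$ under the surjection $T^*Y|_W \to T^*W$ (and the analogous description of $\Kos_D$ as the preimage of $\{d\psi_D\}$ under $T^*Z \times_Z W \to T^*W$), the equality $p^* = i^* \circ f^*$ together with injectivity of $f^*$ gives
\begin{equation*}
\Lambda''_{\psi_D} \cap f^*T^*Z = f^*\Kos_D,
\end{equation*}
viewed as subsets of $T^*Y$ restricted over $W$. Geometrically, $\Lambda''_{\psi_D}$ fibers over the relative cotangent $T^*(Y/Z)|_W$, and the horizontal slice $\{0\} \subset T^*(Y/Z)|_W$ cuts out $f^*\Kos_D$.

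The second step is to compute the intersection. Since $f^{-1}(\Lambda) \subset f^*T^*Z$, one has
\begin{equation*}
\Lambda''_{\psi_D} \cap f^{-1}(\Lambda) \;=\; f^*\Kos_D \cap f^{-1}(\Lambda) \;\cong\; \Kos_D \cap \Lambda \;=\; \{\lambda_D\}
\end{equation*}
by Proposition \ref{transversal intersection 1}. Thus the intersection upstairs is a single point, lying over $\lambda_D$ with $y$-coordinate $w_0 \in W \cap f^{-1}(p(\lambda_D))$. Smoothness of $\Lambda \times_Z Y$ at this point is inherited from smoothness of $\Lambda$ at $\lambda_D$ and smoothness of $f$.

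For transversality, both $\Lambda''_{\psi_D}$ and $f^{-1}(\Lambda)$ are Lagrangian of dimension $\dim Y$ in $T^*Y$, and their set-theoretic intersection is a single reduced point; for Lagrangians of complementary dimension this forces $T\Lambda''_{\psi_D} + Tf^{-1}(\Lambda) = T(T^*Y)$, which is the required transversality. Alternatively, one checks in local coordinates adapted to the splitting $T^*Y = f^*T^*Z \oplus T^*(Y/Z)$ that the vertical $T^*(Y/Z)$-direction of $\Lambda''_{\psi_D}$ together with the vertical $T(Y/Z)$-direction of $f^{-1}(\Lambda)$ supply the fiber directions not seen downstairs, so that spanning reduces to the known transversality of $\Kos_D \pitchfork \Lambda$ in $T^*Z$.

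\textbf{Main obstacle.} The only nontrivial step is the identification in Step 1: matching the shifted conormal $\Lambda''_{\psi_D}$ defined via $i\colon W \hookrightarrow Y$ with the pullback along $f$ of $\Kos_D$ defined via $p\colon W \to Z$. All remaining ingredients—the single-point intersection, smoothness at the intersection point, and the tangent-space count—are then essentially formal consequences of Proposition \ref{transversal intersection 1} and smoothness of $f$.
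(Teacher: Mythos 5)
Your proof is correct in substance, but it takes a genuinely different route from the paper. The paper follows the argument of \cite[Proposition 3.3.2]{NT}: it first identifies $\Lambda''_{\psi_D}\cap (T^*\Bun_G\times_{\Bun_G}V)$ with the shifted conormal $\Kos_D$ and cites Proposition \ref{transversal intersection 1}, and then separately establishes, by an explicit exact-sequence computation with tangent and cotangent spaces, that $T^*\Bun_G\times_{\Bun_G}V$ and $\Lambda''_{\psi_D}$ meet transversely inside $T^*V$ at $\lambda''_D$; the key nontrivial input in that second step is the surjectivity of the codifferential $T^*_{\check\rho(\omega)(-D)}\Bun_G\twoheadrightarrow T^*_{\check\rho(\omega)(-D)}\Bun_N^{\omega(-D)}$, which holds because the point is induced from a $T$-bundle. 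You instead compute the scheme-theoretic intersection $\Lambda''_{\psi_D}\cap f^{-1}(\Lambda)$, identify it with $\Kos_D\times_{T^*\Bun_G}\Lambda=\{\lambda_D\}$ (a reduced point by Lemma \ref{non-empty intersections}), and appeal to the general fact that if two subvarieties of a smooth ambient have complementary dimension and their scheme-theoretic intersection at a point is reduced, the tangent spaces are forced to span. This sidesteps the codifferential-surjectivity observation entirely (indeed, with a reduced intersection point and the dimension count, smoothness of both subvarieties at the point follows automatically rather than being an input), so your argument is arguably cleaner. The ``alternative'' coordinate sketch at the end is closer in spirit to what the paper does but is too vague to count as a proof on its own.

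Two points of imprecision should be fixed. First, the object you call ``$f^*\Kos_D$'' is not the naive pullback $\Kos_D\times_Z Y$: that would have dimension $\dim Y$, and $f^*\Kos_D\cap f^{-1}(\Lambda)$ would then be a whole fiber of $Y\to Z$ rather than a point. What you actually get from $\Lambda''_{\psi_D}\cap f^*T^*Z$ is $\Kos_D$ itself, sitting inside $T^*Z\times_Z W=(f^*T^*Z)|_W$, of dimension $\dim Z$; the computation $\Lambda''_{\psi_D}\times_{T^*Y}f^*T^*Z=W\times_{T^*W}(T^*Z\times_Z W)=\Kos_D$ is a chain of fiber products and so is scheme-theoretic. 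Second, ``set-theoretic intersection is a single reduced point'' is an oxymoron, and the qualifier ``for Lagrangians of complementary dimension'' misattributes the mechanism; what is needed (and what you have) is that the \emph{scheme-theoretic} intersection is a reduced point and that the two closed subschemes have complementary dimension in a smooth ambient — the Lagrangian structure plays no role in that implication.
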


\begin{proof}
	Denote $$\Bun_{G, N}^{\omega(-D)}(X, nx)|_{U_G}$$ by $V$. Tautologically, we have 
	$$\Lambda \times_{\Bun_G(X)}V \subset T^*\Bun_G(X) \times_{\Bun_G(X)} V.$$
	Proposition \ref{transversal intersection 1} implies that $\Lambda \times_{\Bun_G(X)} V$ intersects 
	$$\Lambda^{''}_{\psi_D} \cap (T^*\Bun_G(X) \times_{\Bun_G(X)} V) \cong T^*_{\Bun_N^{\omega(-D)}(X)} \Bun_{G}(X) + d\psi_D$$
	inside of $T^*\Bun_G(X) \times_{\Bun_G(X)} V$ transversely at the smooth point $$\lambda_D^{\prime}:= \{\lambda_D, \check{\rho}(\omega)(-D)^{\prime}\},$$
	where $$\check{\rho}(\omega)(-D)^{\prime}:=(\check{\rho}(\omega)(-D), (\check{\rho}(\omega)(-D)\times^T B) |_{D_n(x)}) \in V.$$Hence it suffices to show that $$T^*\Bun_G(X) \times_{\Bun_G(X)} V$$ intersects $\Lambda^{''}_{\psi_D}$ inside $T^*\Bun_{G, N}^{\omega(-D)}(X, nx)|_{U_G}$ transversely at 
	$$\lambda_D^{''}:= \{\check{\rho}(\omega)(-D)^{\prime}, \sigma\} \in  T^*V,$$
	where $$\sigma \in \Gamma(X, \omega(nx) \otimes (\check{\rho}(\omega)(-D) \times^T \mathfrak{g}^*))$$ is the image of a canonical element in $\Gamma(X, \omega(nx) \otimes (\check{\rho}(\omega)(-D) \times^T \mathfrak{n}^*))$.
	 Indeed, the argument is the same as in \cite[Proposition 3.3.2]{NT}, but we include it for completeness.

	First, note that $T_{\lambda_D^{''}} T^*V$ fits into a short exact sequence
	\begin{equation}
		T^*_{\check{\rho}(\omega)(-D)^{\prime}} V \rightarrow T_{\lambda_D^{''}} T^*V \rightarrow T_{\check{\rho}(\omega)(-D)^{\prime}} V.
	\end{equation}
	However, $$T_{\lambda_D^{\prime}} (T^*\Bun_G(X) \times_{\Bun_G(X)} V)$$
	fits into a short exact sequence
	\begin{equation}
		T^*_{\check{\rho}(\omega)(-D)} \Bun_G(X) \rightarrow T_{\lambda_D^{\prime}} (T^*\Bun_G(X) \times_{\Bun_G(X)} V)\rightarrow T_{\check{\rho}(\omega)(-D)^{\prime}} V.
	\end{equation}
	Therefore it suffices to show that $$T_{\lambda_D^{\prime}} (T^*\Bun_G(X) \times_{\Bun_G(X)} V)$$ and $T_{\lambda_D^{''}}\Lambda^{''}_{\psi_D}$ span the image of $T^*_{\check{\rho}(\omega)(-D)^{\prime}} V$ inside $T_{\lambda_D^{''}} T^*V$. 
	The cotangent space $T^*_{\check{\rho}(\omega)(-D)^{\prime}} V$ fits into a short exact sequence
	\begin{equation}
		(T^*_{\Bun_{N}^{\omega(-D)}} V)_{\check{\rho}(\omega)(-D)}\rightarrow T^*_{\check{\rho}(\omega)(-D)^{\prime}} V\rightarrow T^*_{\check{\rho}(\omega)(-D)} \Bun_{N}^{\omega(-D)}.
	\end{equation}
	Since $\check{\rho}(\omega)(-D) \in \Bun_{N}^{\omega(-D)}$ was induced from a $T$-bundle, the codifferential 
	$$T^*_{\check{\rho}(\omega)(-D)} \Bun_G(X) \twoheadrightarrow T^*_{\check{\rho}(\omega)(-D)} \Bun_{N}^{\omega(-D)}$$ is surjective. Finally, note that 
	
	\begin{equation}
	(T^*_{\Bun_{N}^{\omega(-D)}} V)_{\check{\rho}(\omega)(-D)}
	\cong  T^*_{\check{\rho}(\omega)(-D)^{\prime}} V\times_{T_{\lambda_D^{''}} (T^*V\times_{V}  \Bun_{N}^{\omega(-D)})} T_{\lambda_D^{''}}\Lambda^{''}_{\psi_D} \hookrightarrow  T_{\lambda_D^{''}}\Lambda^{''}_{\psi_D} .
	\end{equation}
\end{proof}

\begin{cor}\label{clean intersection}
	Inside $T^*\Bun_{G}^{\omega(-D)}(X, nx)$ the shifted conormal bundle
	$$T^*_{\Bun_{N}^{\omega(-D)}(X, nx)}\Bun_{G}^{\omega(-D)}(X, nx)+ d\psi_D^{\prime}$$
	intersects $\Lambda \times_{\Bun_{G}(X)} \Bun_G^{\omega(-D)}(X, nx)$ cleanly along smooth points. The intersection is $n \dim(N)$-dimensional.
\end{cor}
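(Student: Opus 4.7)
The plan is to deduce the corollary formally from Proposition \ref{transversal intersection 2} by pulling back along the right vertical smooth map
$$f: \Bun_{G}^{\omega(-D)}(X, nx) \longrightarrow \Bun_{G, N}^{\omega(-D)}(X, nx)|_{U_G}$$
of diagram (\ref{players2}). My first step is to verify that $f$ is smooth of relative dimension $n\dim(N)$: given a $B$-reduction on $D_n(x)$ whose induced $T$-bundle is $\check{\rho}(\omega)(-D)|_{D_n(x)}$, the set of compatible $T$-reductions of the underlying $G$-bundle on $D_n(x)$ is a torsor for $H^0(D_n(x), \cF_B \times^B N)$, and since $N$ is unipotent and $D_n(x)$ is affine of length $n$, this $H^0$ is an affine space of dimension $n \dim(N)$.

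Next, I identify the two protagonists of the corollary as Lagrangian pullbacks, in the smooth-map sense, of the protagonists of Proposition \ref{transversal intersection 2}. For a closed conic substack $S \subset T^*\Bun_{G, N}^{\omega(-D)}(X, nx)|_{U_G}$, let $f^{\flat}(S)$ denote the image in $T^* \Bun_G^{\omega(-D)}(X, nx)$ of $S \times_{\Bun_{G, N}^{\omega(-D)}(X, nx)|_{U_G}} \Bun_G^{\omega(-D)}(X, nx)$ under the codifferential, which is a closed embedding because $f$ is smooth. Because the square (\ref{players2}) is Cartesian with smooth vertical maps, the standard base-change identities for conormal bundles, combined with the fact that $\psi_D'$ is the pullback of $\psi_D$ along the smooth left vertical map of (\ref{players2}), yield
$$T^*_{\Bun_N^{\omega(-D)}(X, nx)}\Bun_G^{\omega(-D)}(X, nx)+d\psi_D' \; = \; f^{\flat}\bigl(T^*_{\Bun_N^{\omega(-D)}(X)} \Bun_{G, N}^{\omega(-D)}(X, nx)|_{U_G}+d\psi_D\bigr),$$
and tautologically $\Lambda \times_{\Bun_G(X)} \Bun_G^{\omega(-D)}(X, nx) = f^{\flat}\bigl(\Lambda \times_{\Bun_G(X)} \Bun_{G, N}^{\omega(-D)}(X, nx)|_{U_G}\bigr)$ above $f^{-1}(U_G)$, which is all that is needed for the local intersection question.

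The last step is to apply the general fact that $f^{\flat}$ carries a transversal intersection at a smooth point $\lambda$ into a clean intersection along the smooth $n\dim(N)$-dimensional fiber $f^{-1}(p(\lambda))$, where $p$ is the cotangent projection. In local coordinates this reduces to the trivial observation that for $f$ the projection $\bA^n \times \bA^d \to \bA^n$, subsets $S_1, S_2 \subset T^*\bA^n$ transverse at $\lambda$ give rise to $S_i \times \bA^d \times \{0\} \subset T^*(\bA^n \times \bA^d)$, whose intersection $\{\lambda\} \times \bA^d \times \{0\}$ is smooth of dimension $d$ and clean on tangent spaces. Applied to Proposition \ref{transversal intersection 2} with $d = n\dim(N)$ and $\lambda = \lambda_D^{''}$, this delivers the corollary.

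The essential geometric input has already been done in Propositions \ref{transversal intersection 1} and \ref{transversal intersection 2}; the only mildly nontrivial point here is the smoothness and relative-dimension computation for $f$, while the remaining identifications are purely formal bookkeeping about how shifted conormal bundles and intersection types behave under the Lagrangian correspondence of a smooth map.
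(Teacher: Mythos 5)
Your proposal is correct and takes essentially the same route as the paper: the paper's own proof is the one-line citation ``Follows from Proposition \ref{transversal intersection 2} as in \cite[Proposition 3.3.3]{NT},'' and what you spell out --- the smoothness of the right vertical of (\ref{players2}), the fact that $f^{\flat}$ carries both the shifted conormal and $\Lambda$ to the corresponding objects upstairs, and the local observation that a transverse intersection point becomes a clean intersection along the smooth $f$-fiber --- is exactly the deduction carried out in NT. The only point requiring care, which you address, is that the relative dimension of $f$ is $n\dim N$ rather than $n\dim B$; this hinges on reading the $T$-bundle identification in the definition of $\Bun_{G,N}^{\omega(-D)}(X,nx)$ as part of the data, which is the reading that makes the corollary's dimension statement come out.
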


\begin{proof}
	Follows from Proposition \ref{transversal intersection 2} as in \cite[Proposition 3.3.3]{NT}.
\end{proof}

\begin{ntn}
	Denote $$\Nilp_D:= \Nilp^{\reg, (2-2g)\check{\rho} + \deg(D)}_{\redu}.$$
\end{ntn}

We are now ready to state the main result of this section. 

\begin{thm}\label{whittaker functional is microstalk}
	For $\cF \in \Shv_{\kappa, \underline{\Nilp}_{\redu}^{\check{\lambda}}}(\Bun_G)$ and a $\check{\Lambda}^+$-valued divisor $D$ with $$\check{\lambda} = (2-2g)\check{\rho} + \deg(D),$$ the shifted twisted Whittaker functional 
	$$\coeff_D(\cF):= \phi_{\psi_D, \check{\rho}(\omega)(-D)} i^!(\cF) [\dim_{\check{\rho}(\omega)(-D)}  \Bun_{B^-}(X)]$$
	calculates microstalk. In particular, it is t-exact, commutes with Verdier duality, and 
	$$\chi(\coeff_D(\cF)) = c_{\Nilp_D, \cF}.$$
\end{thm}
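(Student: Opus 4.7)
The plan is to apply the twisted Nadler–Taylor theorem (Theorem~\ref{extension of NT}) directly to the hyperbolic picture assembled in the previous subsection, with $Y = W$, the $\bG_m$-action of Construction~\ref{action}, the line bundle $\cL$ equal to the pullback of the determinant line bundle on $\Bun_G$ (which is $\bG_m$-equivariant because conjugation by $\check{\rho}(z)$ lies in $T/Z(G)$ and thus acts trivially on $\Bun_G$), the fixed point $y_0 = \check{\rho}(\omega)(-D) \in W^0$, and the function $f = \psi_D$ on the attracting locus $W^{>0}_{y_0} \cong \Bun_N^{\omega(-D)}(X,nx)$.

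First I would pass from $\Bun_G$ to the level stack: restricting to the quasi-compact open $U_G$ containing $p(q^{-1}\Bun_T^{(2g-2)\check{\rho} - \deg(D)})$, then to the smooth cover $\Bun_G^{\omega(-D)}(X,nx) \to \Bun_{G,N}^{\omega(-D)}(X,nx)|_{U_G}$, and finally to the $\bG_m$-invariant open $W$ of the previous subsection. On $W$, the de~Rham twisting $\cG_\kappa$ restricted along the smooth projection to $U_G \subset \Bun_G$ is tracked through the Riemann--Hilbert equivalence (\ref{RH}); the Whittaker function $\psi_D$ becomes $\bG_m$-equivariant of positive weight on $W^+$, while $W^- \cong \Bun_{B^-}^{\omega(-D)}(X,nx)^{(2g-2)\check{\rho} - \deg(D)}$ is the repelling locus.

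Second I would verify the three hypotheses of Theorem~\ref{extension of NT}:
(a) the Cartesian square for $(\widetilde{W})^{0} \to \widetilde{W}$ over $W^{0} \to W$ is exactly Lemma~\ref{W};
(b) the shifted conormal
$T^*_{\Bun_N^{\omega(-D)}(X,nx)} \Bun_G^{\omega(-D)}(X,nx) + d\psi_D$
meets $\Lambda \times_{\Bun_G(X)} \Bun_G^{\omega(-D)}(X,nx)$ cleanly along smooth locus of dimension $n\dim N$ by Corollary~\ref{clean intersection};
(c) the bound $n\dim N \leq \dim W^{\leq 0}$ holds because the level-structure part of $W^-$ already contributes $n\dim B^- \geq n\dim N$. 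Given (a)--(c), Theorem~\ref{extension of NT} identifies $\phi_{\psi_D,\,\check{\rho}(\omega)(-D)}\circ i^!$, up to the appropriate shift, with the shifted twisted microstalk at the unique intersection point $\lambda_D$ of Example~\ref{lambda_D}. By Lemma~\ref{non-empty intersections} and Proposition~\ref{nilpintersections}, $\lambda_D$ lies in the smooth irreducible component $\Nilp_D \subseteq \underline{\Nilp}_{\redu}^{\check{\lambda}}$, confirming that the functor computes the microstalk at a smooth point of $\Lambda$.

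With this identification in hand, the remaining claims are formal. T-exactness and commutation with Verdier duality follow from Lemma~\ref{properties of ms}, and the Euler characteristic formula $\chi(\coeff_D(\cF)) = c_{\Nilp_D, \cF}$ is an instance of Proposition~\ref{microstalk_cc} applied to the component $\beta_{\lambda_D} = \Nilp_D$. The main obstacle I expect is careful bookkeeping of shifts: matching the shift $[-\dim \Bun_N^{\omega(-D)}]$ implicit in the definition of $\coeff_D$, the shift $[\dim_{\check{\rho}(\omega)(-D)} \Bun_{B^-}(X)]$ in the statement, and the Maslov-index shift $[\ind/2]$ from Proposition~\ref{microstalk independence}, so that the output is precisely the twisted microstalk at $\lambda_D$ with no residual cohomological shift; this requires a care analogous to that of \cite[Proposition~3.3.3]{NT}, but now additionally tracking the trivialization of $\cL$ on attracting loci and the comparison of de~Rham and Betti twistings on the level stack.
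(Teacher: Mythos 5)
Your proposal follows essentially the same route as the paper's proof: apply the twisted Nadler–Taylor theorem (Theorem \ref{extension of NT}) to $Y = W$ with the $\bG_m$-action of Construction \ref{action} and $\cL$ the pulled-back determinant line bundle, verify the hypotheses via Lemma \ref{W} and Corollary \ref{clean intersection}, and then deduce t-exactness, commutation with Verdier duality, and the Euler-characteristic formula from Lemma \ref{properties of ms} and Proposition \ref{microstalk_cc}. The one small point the paper makes explicit that you gloss over as ``formal'' is the passage from the microstalk on $W$ back down to $\Bun_G$: since Proposition \ref{microstalk_cc} gives the CC multiplicity of $j_W^!\pi^!\cF$ along $\widetilde{\Nilp_D} \subset T^*W$, one still needs the functoriality of characteristic cycles under smooth pullback (the paper invokes \cite[Proposition 9.4.3]{KS}) to identify that multiplicity with $c_{\Nilp_D,\cF}$ on $T^*\Bun_G$, in parallel with the shift bookkeeping you do flag.
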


\begin{proof}
	By Lemma \ref{W}, Corollary \ref{clean intersection}, and Theorem \ref{extension of NT} there exists a real-valued function $F$ on $W$ such that 
	$$\phi_{\psi_D^{\prime}, \check{\rho}(\omega)(-D)} ({{\alpha}^{\prime}_W})^! \cong \phi_{F, \check{\rho}(\omega)(-D)}: \Shv_{\cL^a, \Lambda}(W) \rightarrow \Vect$$
	is a microstalk shifted by $\dim_{\check{\rho}(\omega)(-D)} \Bun_{B^-}(X, nx) - n\dim N$. Therefore $$\phi_{\psi_D^{\prime}, \check{\rho}(\omega)(-D)} ({{\alpha}^{\prime}_W})^![\dim_{\check{\rho}(\omega)(-D)} \Bun_{B^-}(X, nx) - n\dim N]$$ is t-exact and commutes with Verdier duality.
	But we have 
	$$\phi_{\psi_D^{\prime}, \check{\rho}(\omega)(-D)}({{\alpha}^{\prime}_W})^! j_W^!\pi^![-n \dim G]\cong  \phi_{\psi_D, \check{\rho}(\omega)(-D)} i^! [2n\dim N - n \dim G],$$
	and by smoothness $j_W^! \circ \pi^![-n \dim G]$ is exact and commutes with Verdier duality, therefore 
	$$  \phi_{\psi_D, \check{\rho}(\omega)(-D)} i^! [\dim_{\check{\rho}(\omega)(-D)} \Bun_{B^-}(X) ]$$
	is exact and commutes with Verdier duality. Here $$j_W: W \hookrightarrow \Bun_G^{\omega(-D)}(X, nx)$$ and $$\pi: \Bun_G^{\omega(-D)}(X, nx) \rightarrow \Bun_G.$$
	
	Finally, by Proposition \ref{microstalk_cc} we have 
	\begin{equation}\label{cc}
		\chi(\phi_{\psi_D^{\prime}, \check{\rho}(\omega)(-D)} \circ ({{\alpha}^{\prime}_W})^! (j_W^!\pi^!\cF)[\dim_{\check{\rho}(\omega)(-D)} \Bun_{B^-}(X, nx) - 2n\dim N]) \cong c_{\widetilde{\Nilp_D},(j_W^!\pi^!\cF)[-n\dim N] },
	\end{equation}
	where $$\widetilde{\Nilp_D}:= \Nilp_D \times_{T^*\Bun_G} (W \times_{\Bun_G} T^*\Bun_G) \hookrightarrow T^*W.$$
	However, the left side of (\ref{cc}) coincides with 
	$$\chi(\coeff_D(\cF)),$$
	and the right side of (\ref{cc}) coincides with $c_{\Nilp_D, \cF}$ by \cite[Proposition 9.4.3]{KS}.

\end{proof}

\section{Conservativity of Whitttaker coefficients.}

\subsection{Conservativity for nilpotent sheaves.}

\begin{thm}\label{conservativity nilp sheaves}
	The functor $$\coeff^{\loc}: \Shv_{\kappa, \Nilp}(\Bun_{G})^{\temp} \rightarrow \Whit_{\kappa}(\Gra_G)$$ is conservative.
\end{thm}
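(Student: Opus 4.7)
The plan is to deduce Theorem \ref{conservativity nilp sheaves} from Theorem \ref{whittaker functional is microstalk}, following the strategy outlined in the introduction. Given a nonzero $\cF \in \Shv_{\kappa, \Nilp}(\Bun_G)^{\temp}$, I would exhibit a $\check{\Lambda}^+$-valued divisor $D$ such that $\coeff_D(\cF) \neq 0$; this suffices, since $\coeff_D$ factors through $\coeff^{\loc}$ by $!$-restriction to the component $\Gra_{B, \omega, D}$. Because $\coeff^{\loc}$ is cocontinuous and $\Shv_{\kappa, \Nilp}(\Bun_G)^{\temp}$ is compactly generated, I may also assume $\cF$ is constructible from the outset, so that $\SingS(\cF)$ is a genuine closed conical Lagrangian substack of $\Nilp_{\redu}$ with finitely many irreducible components.

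Next, I invoke Proposition \ref{temperedSS}: every object of $\Shv_{\kappa, \Nilp^{\irreg}}(\Bun_G)$ is Whittaker-anti-tempered, hence dies in the tempered subcategory. The nonzero tempered $\cF$ therefore cannot have $\SingS(\cF) \subseteq \Nilp^{\irreg}$; combined with Proposition \ref{2.5.4.1}, this means that some stratum $\widetilde{\Nilp}^{\reg, \check{\mu}}$ with $\check{\mu} \in \check{\Lambda}^{\operatorname{rel}}$ meets $\SingS(\cF)$. Constructibility of $\cF$ ensures finiteness, so I can pick a \emph{minimal} such $\check{\lambda} \in \check{\Lambda}^{\operatorname{rel}}$ with respect to the standard partial order on coweights. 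The closure relations of Proposition \ref{nilpintersections} then upgrade this minimality to the containment $\SingS(\cF) \subseteq \underline{\Nilp}_{\redu}^{\check{\lambda}}$ in the sense of Notation \ref{Nilpgeq}.

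Now choose a $\check{\Lambda}^+$-valued divisor $D$ on $X$ with $\deg(D) = \check{\lambda} - (2-2g)\check{\rho}$; dominance of this coweight follows from $\check{\lambda} \in \check{\Lambda}^{\operatorname{rel}}$. Theorem \ref{whittaker functional is microstalk} then applies: the restriction $\coeff_D\vert_{\Shv_{\kappa, \underline{\Nilp}_{\redu}^{\check{\lambda}}}(\Bun_G)}$ is t-exact, commutes with Verdier duality, and on the heart computes the characteristic-cycle coefficient $c_{\Nilp_D, -}$ at the component $\Nilp_D = \Nilp^{\reg, \check{\lambda}}_{\redu}$. Since $\Nilp^{\reg, \check{\lambda}}$ appears as a component of $\SingS(\cF)$, additivity of characteristic cycles across cohomological degrees produces some index $i$ with $c_{\Nilp_D, H^i(\cF)} \neq 0$, hence $\coeff_D(H^i(\cF)) \neq 0$. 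By t-exactness $H^i(\coeff_D(\cF)) \cong \coeff_D(H^i(\cF))$, so $\coeff_D(\cF) \neq 0$ and therefore $\coeff^{\loc}(\cF) \neq 0$, as desired.

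I expect the principal obstacle to be the minimality-and-containment step: carefully showing that the minimal choice of $\check{\lambda}$ forces $\SingS(\cF) \subseteq \underline{\Nilp}_{\redu}^{\check{\lambda}}$, and that the component $\Nilp^{\reg, \check{\lambda}}$ indeed carries nonzero characteristic-cycle multiplicity in some cohomological slice $H^i(\cF)$. Both rest on the closure description of the strata from Proposition \ref{nilpintersections} and on the Lagrangian-component structure of $\SingS$ of constructible sheaves. A secondary technical point is verifying the reduction to constructible $\cF$: this should follow from compact generation of $\Shv_{\kappa, \Nilp}(\Bun_G)^{\temp}$ by constructible objects and cocontinuity of $\coeff^{\loc}$, but may require a brief check that the tempered projection interacts well with the $\SingS$-filtration.
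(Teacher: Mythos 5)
Your argument follows the paper's proof almost verbatim: invoke Proposition \ref{temperedSS} to see that $\SingS(\cF)$ must meet $\Nilp_{\redu}^{\reg}$, pick a minimal $\check{\lambda}$ and a divisor $D$ with $\deg(D)=\check{\lambda}-(2-2g)\check{\rho}$, apply Theorem \ref{whittaker functional is microstalk} at the component $\Nilp_D$, and then use t-exactness of $\coeff_D$ to propagate the nonvanishing from a constructible object up to $\cF$. The one place where you diverge is the opening reduction: "because $\coeff^{\loc}$ is cocontinuous and $\Shv_{\kappa,\Nilp}(\Bun_G)^{\temp}$ is compactly generated, I may assume $\cF$ is constructible." That implication does not hold in general — a cocontinuous functor that is nonzero on every nonzero compact object can still have nonzero kernel, since the kernel need not contain any compact object. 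The paper sidesteps this by never reducing to constructible $\cF$: instead it takes the cohomology objects $H^i(\cF)$ (which need not be constructible), and inside the abelian heart chooses a \emph{constructible subobject} $\cG\subseteq H^i(\cF)$ with $\Nilp^{\reg,\check{\lambda}}\subseteq\SingS(\cG)$ (possible because $H^i(\cF)$ is a filtered colimit of such subobjects and $\Nilp^{\reg,\check{\lambda}}$ is irreducible), and then uses the chain
$$H^0(\coeff_D(\cG))\hookrightarrow H^0(\coeff_D(H^i(\cF)))\cong H^i(\coeff_D(\cF)),$$
where the first injection comes from left-exactness of $\coeff_D$ on the heart and the second from t-exactness. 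You flagged this as a "secondary technical point" that might "require a brief check"; in fact it is the one step that needs a genuine fix, and the subobject argument is the fix. Everything else — the minimality-implies-containment step via Proposition \ref{nilpintersections}, the application of Theorem \ref{whittaker functional is microstalk}, and the characteristic-cycle count — is exactly what the paper does.
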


\begin{rem}
	More generally, this implies that for every $\cF \in \Shv_{\kappa, \Nilp}(\Bun_{G})$ with non-zero projection to $ \Shv_{\kappa, \Nilp}(\Bun_{G})^{\temp}$ we have $$\coeff^{\loc}(\cF) \neq 0.$$
\end{rem}

\begin{proof}
	We have $\cF \notin \Shv_{\kappa, \Nilp}(\Bun_{G})^{\atemp}$, therefore by Proposition \ref{temperedSS} we have  $$\SingS(\cF) \cap \Nilp_{\redu}^{\reg} \neq 0.$$
	Choose a minimal $\check{\lambda} \in \check{\Lambda}^{\rel}$ such that $\SingS(\cF) \cap \Nilp^{\reg, \check{\lambda}} \neq 0$ and a $\check{\Lambda}^+$-valued divisor $D$ on $X$ with $$\check{\lambda} = (2-2g)\check{\rho} + \deg(D).$$ We claim that $\coeff_D(\cF) \neq 0$.   
	
	Indeed, there exists $i$ and a constructible $\cG \in H^i(\cF)$ with $\Nilp^{\reg, \check{\lambda}}  \in \SingS(\cG)$. Then by Theorem \ref{whittaker functional is microstalk} we have $\coeff_D(\cG) \neq 0$. By Theorem \ref{whittaker functional is microstalk} we have 
	$$H^0(\coeff_D(\cG)) \hookrightarrow H^0(\coeff_D(H^i(\cF))) \cong H^i(\coeff_D(\cF)),$$
	which implies the claim.
\end{proof}

\subsection{Conjectural metaplectic spectral action.}\label{SA}

Recall from \cite[6.3.3]{GL} that to $\kappa$ we can associate a metaplectic Langlands dual datum
$$(H, \cG_Z, \epsilon)$$
consisting of a reductive group $H$, a $Z$-gerbe  $\cG_Z$ on $X$ (where $Z$ is the center of $H$), and a homomorphism 
$$\epsilon: \bZ / 2\bZ \rightarrow Z(k).$$

\subsubsection{} In \cite[9.2]{GL}, Gaitsgory and Lysenko construct the factorizable metaplectic geometric Satake functor 
\begin{equation}
	\Sat: \Rep(H)_{\cG_Z, \Ranp}^{\epsilon} \rightarrow \Sph_{\kappa, \Ranp}
\end{equation}
from an appropriately twisted factorization category of representations of $H$ to twisted spherical category. This defines an action of $\Rep(H)_{\cG_Z, \Ranp}^{\epsilon}$ on $D_{\kappa}(\Bun_G)$. Parallel to the untwisted case, Gaitsgory and Lysenko (\cite[9.6]{GL}) construct a spectral localization symmetric monoidal functor 
\begin{equation}
	\Loc: \Rep(H)_{\cG_Z, \Ranp}^{\epsilon} \rightarrow \QCoh(\LocSys_H^{\cG_Z})
\end{equation}
to the category of quasi-coherent sheaves on the stack of twisted local systems (\cite[8.4]{GL}).

\begin{conj}\label{spectral action}
	The action of $\Rep(H)_{\cG_Z, \Ranp}^{\epsilon}$ on $D_{\kappa}(\Bun_G)$ uniquely factors through $\QCoh(\LocSys_H^{\cG_Z})$ via $\Loc$.
\end{conj}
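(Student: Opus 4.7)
The plan is to follow the strategy established in the critical-level case (as carried out in \cite{GLCI}), adapted to the metaplectic context. The first step is to upgrade the action of $\Rep(H)_{\cG_Z, \Ranp}^{\epsilon}$ on $\D_{\kappa}(\Bun_G)$ to a factorization-compatible module structure over the Ran space, using the factorizability of the metaplectic Satake functor $\Sat$ of \cite{GL}. Once this is done, the assertion of the conjecture becomes equivalent to showing that the kernel of the symmetric monoidal localization functor $\Loc$ acts trivially on $\D_{\kappa}(\Bun_G)$, and this is the property one has to verify.

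To pass from the factorization action to an honest action of $\QCoh(\LocSys_H^{\cG_Z})$, the key technical input is a \emph{$1$-affineness} statement for the prestack $\LocSys_H^{\cG_Z}$: the functor $\Loc$ should identify $\QCoh(\LocSys_H^{\cG_Z})$ with modules over the commutative factorization algebra obtained by pushing forward the monoidal unit of $\Rep(H)_{\cG_Z, \Ranp}^{\epsilon}$ along $\Loc$. Granting this, the factorization of the action through $\Loc$ reduces to checking the Hecke-versus-spectral compatibility at each $x \in X$: the spectral action at $x$ should match the Hecke functor produced by $\Sat$ from $V \in \Rep(H)_{\cG_Z}^{\epsilon}$. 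This last compatibility ought to be extractable directly from the construction of $\Sat$ in \cite{GL}, after restricting to the locus on which $\LocSys_H^{\cG_Z}$ has a transparent description and extending by a density argument.

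The hard part will be the $1$-affineness of $\LocSys_H^{\cG_Z}$ in the twisted setting. In the untwisted situation, this relies on Arinkin-Gaitsgory's careful analysis of the derived geometry of $\LocSys_H$ as a quasi-smooth derived stack; the $\cG_Z$-twist modifies the derived structure only by a central operation that should not disrupt the essential mechanism, but rigorously transporting the argument — and, crucially, reconciling it with the $\bZ/2\bZ$-grading dictated by $\epsilon$ — requires a nontrivial extension of the existing formalism. Once $1$-affineness is secured, I expect the remaining steps (vanishing of the kernel of $\Loc$ on the automorphic category) to be handled by a direct adaptation of the untwisted arguments, combined with the temperedness-based microlocal tools developed earlier in the present paper.
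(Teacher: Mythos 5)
This statement is a \emph{conjecture} in the paper, not a theorem: the author explicitly writes that the existence of a $\QCoh(\LocSys_H^{\cG_Z})$-action on $\D_\kappa(\Bun_G)$ ``is not available yet, but was conjectured by Gaitsgory and Lysenko,'' and the results of Section 6.3 (Theorem \ref{general}) are proved \emph{modulo} Conjecture \ref{spectral action}. So there is no proof in the paper to compare yours against; the paper treats the statement as an open input.

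As a sketch of how one might eventually prove the conjecture, your outline is reasonable and broadly aligned with the critical-level template, but it is not a proof. You yourself flag the decisive missing piece: $1$-affineness of $\LocSys_H^{\cG_Z}$ in the gerbe-twisted setting, together with reconciling the $\epsilon$-grading, and you note this ``requires a nontrivial extension of the existing formalism.'' That is precisely the content of the conjecture; without that step the rest of the argument (reduction to the kernel of $\Loc$ acting trivially, pointwise Satake compatibility, density) does not get off the ground. In other words, the ``hard part'' you defer \emph{is} the statement. A genuine proof would need to actually establish the twisted $1$-affineness (or an alternative mechanism producing the $\QCoh$-action) rather than gesture at transporting the Arinkin--Gaitsgory analysis; as written, the proposal restates the conjecture in slightly different language rather than resolving it.
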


\subsection{Conservativity for general D-modules.}
 In this subsection we will prove the following statement modulo Conjecture \ref{spectral action}:
 
 \begin{thm}\label{general}
 	The functor 
 	$$\coeff^{\loc}: D_{\kappa}(\Bun_{G})^{\temp} \rightarrow \Whit_{\kappa}(\Gra_G)$$ is conservative.
 \end{thm}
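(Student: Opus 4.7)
\textbf{Proof proposal for Theorem \ref{general}.}

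Granting Conjecture \ref{spectral action}, my plan is to bootstrap Theorem \ref{conservativity nilp sheaves} from $\Shv_{\kappa, \Nilp}(\Bun_G)^{\temp}$ to the whole of $D_{\kappa}(\Bun_G)^{\temp}$ by a spectral-support argument, in close parallel with the reduction used in \cite{FR} at the critical level (where the role played here by Conjecture \ref{spectral action} is played there by the already-known critical-level spectral action).

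First, I would verify the compatibility of $\coeff^{\loc}$ with the spectral action. Using Conjecture \ref{spectral action}, both $D_{\kappa}(\Bun_G)$ and $\Whit_{\kappa}(\Gra_G)$ acquire structures of modules over $\QCoh(\LocSys_H^{\cG_Z})$, and a Casselman--Shalika-type calculation in the twisted factorization setting shows that $\coeff^{\loc}$ intertwines these actions. Since Whittaker-temperedness is defined via the $\Sph_{\kappa, x}$-action, the subcategories $(-)^{\temp}$ are stable under the $\QCoh(\LocSys_H^{\cG_Z})$-action, so the restriction of $\coeff^{\loc}$ to tempered parts remains $\QCoh(\LocSys_H^{\cG_Z})$-linear.

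Second, I would establish the metaplectic analog of the decomposition of \cite[Section 20]{AGKRRV},
\[
D_{\kappa}(\Bun_G)^{\temp} \cong \Shv_{\kappa, \Nilp}(\Bun_G)^{\temp} \otimes_{\QCoh((\LocSys_H^{\cG_Z})_{\on{restr}})} \QCoh(\LocSys_H^{\cG_Z}),
\]
together with the analogous decomposition for $\Whit_{\kappa}(\Gra_G)$. Given these decompositions and the $\QCoh(\LocSys_H^{\cG_Z})$-linearity of $\coeff^{\loc}$, conservativity on $D_{\kappa}(\Bun_G)^{\temp}$ follows formally: given a non-zero $\cF$ in the tempered category, choose a closed point $\sigma \in \LocSys_H^{\cG_Z}$ at which $\cF$ has non-zero fiber, restrict $\cF$ to the formal neighborhood of $\sigma$, present it (after extension of scalars) as arising from an object of $\Shv_{\kappa, \Nilp}(\Bun_G)^{\temp}$, apply Theorem \ref{conservativity nilp sheaves} to conclude that the Whittaker coefficients of the nilpotent-support model are non-zero, and propagate this back to $\cF$ by $\QCoh(\LocSys_H^{\cG_Z})$-linearity.

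The main obstacle is the tensor product decomposition in the second step. In the untwisted setting, AGKRRV derive it from a combination of the derived spectral action on $\Shv_{\Nilp}(\Bun_G)$ together with delicate control over the restricted local system stack $(\LocSys_{\check{G}})_{\on{restr}}$. Both ingredients must be adapted to the quantum/metaplectic setting: the spectral action itself is precisely the content of Conjecture \ref{spectral action}, whereas the analysis of $(\LocSys_H^{\cG_Z})_{\on{restr}}$ and the corresponding nilpotent generation statement should go through essentially formally, provided the bookkeeping of the gerbe $\cG_Z$, the character $\epsilon$, and the factorization twistings is carried out carefully. Once this is in place, the conservativity argument sketched above is routine.
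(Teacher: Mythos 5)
Your proposal is correct in outline and follows essentially the same strategy as the paper, which itself adapts \cite[10.3.3]{FR} and \cite[Section 20]{AGKRRV} to the metaplectic setting. The paper packages the reduction slightly differently from you: rather than first establishing the global decomposition
\[
D_{\kappa}(\Bun_G)^{\temp} \cong \Shv_{\kappa, \Nilp}(\Bun_G)^{\temp} \otimes_{\QCoh((\LocSys_H^{\cG_Z})_{\operatorname{restr}})} \QCoh(\LocSys_H^{\cG_Z})
\]
and then specializing pointwise, it goes directly to the fiberwise statement via \cite[Lemma 21.4.6]{AGKRRV} (to prove an object is zero, it suffices to check that its image in $\otimes_{\QCoh(\LocSys_H^{\cG_Z})}\Vect_{\sigma'}$ vanishes for all field extensions $k'/k$ and all $\sigma' \in \LocSys_H^{\cG_Z}(k')$), and then proves the fiberwise identification
\[
\Shv_{\kappa, /k', \Nilp_{k'}}(\Bun_{G, k'})^{\temp} \otimes_{\QCoh(\LocSys_{H, k'}^{\cG_Z})}\Vect_{\sigma'} \simeq D_{\kappa}(\Bun_{G})^{\temp} \otimes_{\QCoh(\LocSys_H^{\cG_Z})}\Vect_{\sigma'}
\]
as its Lemma \ref{20.9.1}, established via the spectral projector, the nilpotent-singular-support property of Hecke eigensheaves (Proposition \ref{nilpss}), and a generation argument (Lemma \ref{gen}). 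The fiberwise route avoids having to set up the full $(\LocSys_H^{\cG_Z})_{\operatorname{restr}}$ machinery, which is a modest simplification, but the mathematical content is the same as your step two. Two smaller points: the compatibility of $\coeff^{\loc}$ with the $\QCoh(\LocSys_H^{\cG_Z})$-action that you flag as requiring a ``Casselman--Shalika-type calculation'' is encoded in the paper simply by the commutativity of the diagram (\ref{AGKRRV}) (plus Lemma \ref{Rep(H) and LS}, which lets one replace $\otimes_{\QCoh(\LocSys_H^{\cG_Z})}$ with $\otimes_{\Rep(H)_{\cG_Z, \Ranp}^{\epsilon}}$ on the Whittaker side), and is not spelled out as a separate step; and you would need to handle arbitrary field extensions $k'/k$, not just closed points of $\LocSys_H^{\cG_Z}$, since this stack is not of finite type — the paper's appeal to \cite[Lemma 21.4.6]{AGKRRV} takes care of this.
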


The proof will follow the argument in \cite[10.3.3]{FR}. First, we generalize \cite[20.9.1]{AGKRRV}.

\begin{lm}\label{20.9.1}
	 For any field extension $k^\prime / k$, the inclusion
	 \begin{equation}
	 \begin{split}
	  \Shv_{\kappa, /k^{\prime}, \Nilp_{k^{\prime}}}(\Bun_{G, k^{\prime}}) \otimes_{\QCoh(\LocSys_{H, k^{\prime}}^{\cG_Z})}\Vect_{\sigma^{\prime}} \rightarrow D_{\kappa,  /k^{\prime}}(\Bun_{G,  k^{\prime}}) \otimes_{\QCoh(\LocSys_{H,  k^{\prime}}^{\cG_Z})}\Vect_{\sigma^{\prime}} \cong \\
	  \cong D_{\kappa}(\Bun_{G}) \otimes_{\QCoh(\LocSys_H^{\cG_Z})}\Vect_{\sigma^{\prime}} 
	 \end{split}
	 \end{equation}
	and its analogue for the tempered subcategories are equivalences.
\end{lm}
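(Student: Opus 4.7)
The plan is to reduce the statement over $k'$ to the corresponding assertion over $k$ via compatibility with base change, and then to invoke a metaplectic version of the argument in \cite[Section 20]{AGKRRV}.

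First I would set up the base change. The stack $\LocSys_H^{\cG_Z}$, the twisting induced by $\kappa$, and the global nilpotent cone $\Nilp \subset T^*\Bun_G$ are all defined over $k$. This yields
$$\LocSys_{H,k'}^{\cG_Z} \cong \LocSys_H^{\cG_Z} \times_{\Spec k} \Spec k',$$
together with the base-change equivalences
$$D_{\kappa, /k'}(\Bun_{G,k'}) \cong D_\kappa(\Bun_G) \otimes_{\Vect_k} \Vect_{k'}, \qquad \Shv_{\kappa, /k', \Nilp_{k'}}(\Bun_{G, k'}) \cong \Shv_{\kappa, \Nilp}(\Bun_G) \otimes_{\Vect_k} \Vect_{k'}.$$
Tensoring with $\Vect_{\sigma'}$ over $\QCoh(\LocSys_{H,k'}^{\cG_Z})$ then produces the second equivalence asserted in the statement and reduces the remaining claim to showing that the inclusion
$$\Shv_{\kappa, \Nilp}(\Bun_G) \otimes_{\QCoh(\LocSys_H^{\cG_Z})} \Vect_{\sigma'} \hookrightarrow D_\kappa(\Bun_G) \otimes_{\QCoh(\LocSys_H^{\cG_Z})} \Vect_{\sigma'}$$
is an equivalence.

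Next, I would import the proof strategy of \cite[Theorem 20.9.1]{AGKRRV}. By density of compact objects, it suffices to show that any compact object in the right-hand side lies in the image of the left-hand side. The key input is the \emph{metaplectic nilpotent singular support estimate}: any compact $\cF \in D_\kappa(\Bun_G)$ whose image in the fiber at $\sigma'$ is nonzero admits a representative in $\Shv_{\kappa, \Nilp}(\Bun_G)$ for that image. Beilinson's spectral projector construction produces such a representative: using the metaplectic geometric Satake functor
$$\Sat: \Rep(H)_{\cG_Z, \Ranp}^{\epsilon} \to \Sph_{\kappa, \Ranp}$$
of \cite{GL}, one builds idempotents concentrated near $\sigma'$ whose action on $\cF$ yields an object with nilpotent singular support, the estimate being controlled by the geometry of the affine Grassmannian and the weights of representations of $H$.

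Finally, the tempered analogue follows at once: the Whittaker-tempered subcategory is stable under $\Sph_{\kappa, x}$ and hence, by Conjecture \ref{spectral action}, under the full spectral action of $\QCoh(\LocSys_H^{\cG_Z})$, so the same argument restricts. The main obstacle is the metaplectic singular support estimate itself: one must verify that the metaplectic Satake functor of \cite{GL} enjoys the t-exactness and factorizability properties used to push Beilinson's argument through. We expect this to go through with only notational modifications, since the metaplectic setup preserves all the geometric inputs.
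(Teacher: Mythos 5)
Your proposal follows essentially the same route as the paper: reduce to $k$ via base change, adapt the argument of \cite[Theorem 20.9.1]{AGKRRV}, and feed in a metaplectic nilpotent singular-support statement for Hecke eigensheaves obtained via Beilinson's spectral projector built from the metaplectic Satake functor of \cite{GL}. The paper packages the two inputs you gesture at more explicitly: Proposition \ref{nilpss} (eigensheaves have nilpotent singular support, adapting \cite[Main Theorem 14.4.4, Corollary 14.4.10]{AGKRRV}) and Lemma \ref{gen} (the objects $P_{\sigma}(\delta_{y_i})$ compactly generate $\Shv_{\kappa,\Nilp}(\Bun_G)\otimes_{\QCoh(\LocSys_H^{\cG_Z})}\Vect_{\sigma}$). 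Your key claim is stated imprecisely --- objects of $D_\kappa(\Bun_G)\otimes_{\QCoh(\LocSys_H^{\cG_Z})}\Vect_{\sigma'}$ aren't literally ``represented'' by objects of $D_\kappa(\Bun_G)$; the clean statement is that $\mathbf{oblv}_{\operatorname{Hecke}}$ lands in $\Shv_{\kappa,\Nilp}(\Bun_G)$ --- and you leave the existence of an explicit compact generating family (Lemma \ref{gen}) implicit; with those two points made precise, your argument coincides with the paper's.
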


We will also use the folliwing statement:
\begin{lm}\label{Rep(H) and LS}
	The natural functor
	$$\QCoh(\LocSys_H^{\cG_Z}) \otimes_{\Rep(H)_{\cG_Z, \Ranp}^{\epsilon}} \QCoh(\LocSys_H^{\cG_Z}) \rightarrow \QCoh(\LocSys_H^{\cG_Z})$$
	is an equivalence of categories.
\end{lm}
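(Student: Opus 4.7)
The strategy is to show that the symmetric monoidal spectral localization $\Loc: \Rep(H)_{\cG_Z, \Ranp}^{\epsilon} \to \QCoh(\LocSys_H^{\cG_Z})$ is a Bousfield localization, i.e.\ that its right adjoint $\Loc^R$ is fully faithful. Granted this, a standard formal argument shows that any symmetric monoidal functor $F:\mathbf{A}\to\mathbf{B}$ in $\DGCat$ whose right adjoint is fully faithful makes $\mathbf{B}$ into an idempotent $E_\infty$-algebra in $\mathbf{A}$-modules, and for such an idempotent algebra the multiplication map
$$\mathbf{B}\otimes_{\mathbf{A}}\mathbf{B}\;\longrightarrow\;\mathbf{B}$$
is automatically an equivalence. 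This formal step reduces the entire lemma to full faithfulness of $\Loc^R$.

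In the untwisted situation (where $\cG_Z$ and $\epsilon$ are trivial) this full faithfulness is the $1$-affineness of $\LocSys_H$ over $\Rep(H)_{\Ranp}$, due to Gaitsgory. The known proof presents $\LocSys_H$ as a colimit over finite subsets $I\subset X$ indexing points of the Ran space, and verifies by a direct computation on each stratum that the Ran-space averaging of a representation of $H$ recovers the pushforward along the tautological map $\Ranp\to\LocSys_H$. I would quote this result as a black box.

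To handle the twisted case I would reduce to the untwisted one by using that the $Z$-gerbe $\cG_Z$ is \'etale-locally trivial (its band is the diagonalizable group $Z$). Both $\Rep(H)_{\cG_Z,\Ranp}^{\epsilon}$ and $\QCoh(\LocSys_H^{\cG_Z})$ are assembled from their untwisted counterparts via the \emph{same} descent datum associated to $\cG_Z$, and the functor $\Loc$ is compatible with that descent. Full faithfulness of $\Loc^R$ then descends from the untwisted case by \'etale descent of presentable stable categories, which is available because both $\Rep(H)_{\cG_Z,\Ranp}^{\epsilon}$ and $\QCoh(\LocSys_H^{\cG_Z})$ are $\DGCat$-valued sheaves on $X_{\et}$ with respect to the trivializations of $\cG_Z$.

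The main obstacle is the compatibility bookkeeping between the twisting data on the two sides through the Ran-space presentation: one must verify that the $\epsilon$-twist on $\Rep(H)_{\cG_Z,\Ranp}^{\epsilon}$ matches the pullback along $\Loc$ of the tautological gerbe on $\LocSys_H^{\cG_Z}$, stratum by stratum over $X^I$, including the compatibility with diagonals. This compatibility is built into the construction of $\Loc$ in \cite[\S 9]{GL}, but it requires some careful unpacking of the definitions before the descent argument can be applied verbatim.
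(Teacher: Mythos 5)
Your formal step is sound: if a symmetric monoidal functor $F:\mathbf{A}\to\mathbf{B}$ in $\DGCat$ has a continuous, $\mathbf{A}$-linear, fully faithful right adjoint $F^R$, then $\mathbf{B}$ is a retract of $\mathbf{A}$ as an $\mathbf{A}$-module via $(F,F^R)$; tensoring the retraction with $\mathbf{B}$ over $\mathbf{A}$, together with the unit axiom, shows the multiplication $\mathbf{B}\otimes_{\mathbf{A}}\mathbf{B}\to\mathbf{B}$ has both a right inverse and a left inverse and is therefore an equivalence. (You should say a word about why $F^R$ is continuous and $\mathbf{A}$-linear: here $\mathbf{A}=\Rep(H)_{\cG_Z,\Ranp}^{\epsilon}$ is rigid by Proposition \ref{rigid}, which supplies the projection formula and hence $\mathbf{A}$-linearity once continuity is known.) So the reduction to full faithfulness of $\Loc^R$ is legitimate.

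The gap is in how you propose to establish that full faithfulness in the twisted setting. You assert that $\Rep(H)_{\cG_Z,\Ranp}^{\epsilon}$ and $\QCoh(\LocSys_H^{\cG_Z})$ are ``$\DGCat$-valued sheaves on $X_{\et}$ with respect to trivializations of $\cG_Z$,'' and you would then deduce full faithfulness by \'etale descent from the untwisted case. This does not work: neither of these categories is a sheaf on $X_{\et}$. The stack $\LocSys_H^{\cG_Z}$ is a single global object attached to the proper curve $X$; its definition cannot be localized on $X$, and there is no meaningful assignment $U\mapsto\QCoh(\LocSys_H(U)^{\cG_Z|_U})$ recovering $\QCoh(\LocSys_H^{\cG_Z})$ by gluing over an \'etale cover. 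Likewise $\Rep(H)_{\cG_Z,\Ranp}^{\epsilon}$ is built over $\Ranp = \on{Ran}(X)$, not from Ran spaces of the $U_i$; restricting to $\on{Ran}(U_i)$ does not produce a descent datum for the Ran-space category over $X$. Even if one could set up some descent framework, the ``untwisted'' local pieces would live over non-proper open curves $U_i$, which is not the situation addressed by the result you want to import, so nothing has been reduced to a known case. In short: the gerbe $\cG_Z$ is indeed \'etale-locally trivial on $X$, but the categories in the lemma are not local on $X$, so local triviality of $\cG_Z$ does not give a descent argument.

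The paper handles the twist in a different and more robust way. Rather than attempting to localize on $X$, one uses Remark \ref{coHom} to identify $\Rep(H)_{\cG_Z,\Ranp}^{\epsilon}$ with $\underline{\operatorname{coHom}}^{\D(X)}(\Rep(H)_{\cG_Z}^{\epsilon}(X),\D(X))$. Under this identification $\Loc$ takes exactly the shape of the corresponding functor in \cite[(12.12)]{AGKRRV}, with the twist absorbed into the source category $\Rep(H)_{\cG_Z}^{\epsilon}(X)$ as a $\D(X)$-linear symmetric monoidal category. The proof of \cite[Theorem 12.6.3]{AGKRRV} is then run verbatim against this input; the twist never needs to be trivialized. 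If you want to salvage a ``reduce to full faithfulness of $\Loc^R$'' strategy, you should establish that full faithfulness directly in the twisted setting by the same coHom reformulation, rather than by \'etale descent over $X$.
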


We postpone the proofs of Lemma \ref{20.9.1} and Lemma \ref{Rep(H) and LS} until section \ref{spectral proj section}.

\begin{proof}[Proof of Theorem \ref{general}]
	Suppose we are given $\cF \in D_{\kappa}(\Bun_{G})$ with $\coeff^{\loc}(\cF) = 0$. We want to show that the image of $\cF$ in $D_{\kappa}(\Bun_{G})^{\temp}$ is zero. 
	
	By \cite[Lemma 21.4.6]{AGKRRV}, it suffices to show that for any field extension $k^{\prime} / k$ and any $\sigma^{\prime} \in \LocSys_H^{\cG_Z}(k^{\prime})$ the image of $\cF$ in $$D_{\kappa}(\Bun_{G})^{\temp} \otimes_{\QCoh(\LocSys_H^{\cG_Z})} \Vect_{\sigma^{\prime}}$$ is zero. 
	We also have a commutative diagram (we implicitly use Lemma \ref{Rep(H) and LS})
	\begin{equation}\label{AGKRRV}
	\begin{tikzcd}
D_{\kappa}(\Bun_{G})^{\temp} \otimes_{\QCoh(\LocSys_H^{\cG_Z})}\Vect_{\sigma^{\prime}}\ar[dr, rightarrow, ""]&   	  \\
	 \Shv_{\kappa, /k^{\prime}, \Nilp_{k^{\prime}}}(\Bun_{G, k^{\prime}})^{\temp} \otimes_{\QCoh(\LocSys_{H, k^{\prime}}^{\cG_Z})}\Vect_{\sigma^{\prime}}\ar[u, rightarrow, "\simeq"']\ar[r, rightarrow, ""']&   \Whit_{\kappa}(\Gra_G) \otimes_{\Rep(H)_{\cG_Z, \Ranp}^{\epsilon}} \Vect_{\sigma^{\prime}}.
	\end{tikzcd}
	\end{equation}
	Here the vertical arrow is an equivalence by Lemma \ref{20.9.1}. By Theorem \ref{conservativity nilp sheaves} the horizontal arrow in (\ref{AGKRRV}) is conservative, hence the diagonal arrow is conservative, and therefore $\cF$ is zero.
\end{proof}

\subsection{Metaplectic Beilinson's spectral projector.}\label{spectral proj section}
Recall that Beilinson’s projector is a functor that manifactures Hecke
eigen-objects (in the sense of \cite[9.5.3]{GL}) from arbitrary objects of $\D_{\kappa}(\Bun_G)$. 

\begin{ntn}
	For a point $\sigma \in \LocSys_H^{\cG_Z}$ set $$\operatorname{Hecke}_{\sigma}:= \D_{\kappa}(\Bun_G) \otimes_{\Rep(H)_{\cG_Z, \Ranp}^{\epsilon}} \Vect_{\sigma},$$
	where the action of $\Rep(H)_{\cG_Z, \Ranp}^{\epsilon}$ on $\Vect_{\sigma}$ comes from $\operatorname{Ev}_{\sigma}$ (\cite[9.5.3]{GL}). We will call elements of $\operatorname{Hecke}_{\sigma}$ \emph{Hecke eigensheaves}.
\end{ntn}

We can also write
\begin{equation}
\operatorname{Hecke}_{\sigma} = \Maps_{\Rep(H)_{\cG_Z, \Ranp}^{\epsilon} \otimes \Rep(H)_{\cG_Z, \Ranp}^{\epsilon} \operatorname{-comod}}(\Rep(H)_{\cG_Z, \Ranp}^{\epsilon}, \D_{\kappa}(\Bun_G))
\end{equation}
\begin{pr}\label{rigid}
	The category  $\Rep(H)_{\cG_Z, \Ranp}^{\epsilon}$ is rigid.
\end{pr}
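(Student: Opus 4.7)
The plan is to deduce the rigidity of $\Rep(H)_{\cG_Z, \Ranp}^{\epsilon}$ from the classical rigidity of $\Rep(H)$, by transporting duality through the twisting by $\cG_Z$ and through the factorization structure on $\Ranp$. First I would identify a generating class of compact objects: because $\Rep(H)$ is rigid with compact generators given by finite-dimensional representations, and because $\cG_Z$ is étale locally trivial on $\Ranp$, the compact objects arising from finite-dimensional $H$-representations inserted at configurations of points on $X$ generate $\Rep(H)_{\cG_Z, \Ranp}^{\epsilon}$ under colimits and factorization products.

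Next I would construct duals of such generators. For a finite-dimensional $V \in \Rep(H)$ sitting at a configuration $\underline{x} \in \Ranp$, the candidate dual is the contragredient $V^{\vee}$ sitting at the same configuration, with the standard evaluation and coevaluation maps from $\Rep(H)$. Étale-locally on $\Ranp$ the gerbe trivializes, and pointwise these maps exhibit $V^{\vee}$ as an honest dual. Factorization compatibility is automatic because duality in $\Rep(H)$ is monoidal with respect to the tensor product, and the factorization product on the Ran space reduces pointwise to the tensor product of $\Rep(H)$'s. This yields a duality functor on generators, which extends to all of $\Rep(H)_{\cG_Z, \Ranp}^{\epsilon}$ by passage to colimits.

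The main obstacle is checking that duality closes up inside $\Rep(H)_{\cG_Z, \Ranp}^{\epsilon}$ rather than mapping it to the oppositely-twisted category $\Rep(H)_{\cG_Z^{-1}, \Ranp}^{\epsilon^{-1}}$: the contragredient functor naively swaps $\cG_Z$ with $\cG_Z^{-1}$ and $\epsilon$ with $\epsilon^{-1}$. One must verify, from the explicit construction in \cite[Sect.~6]{GL} of the metaplectic dual datum from $\kappa$, that there is a canonical equivalence between the category and its inverse-twisted sibling under which contragredient becomes an endofunctor; since $\epsilon$ takes values in $2$-torsion of $Z(k)$ and $\cG_Z$ is constructed from a symmetric bilinear form, the requisite self-duality is built into the Gaitsgory–Lysenko construction, and the remaining work amounts to tracing through the identifications.
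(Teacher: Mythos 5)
Your approach is genuinely different from the paper's, but it has substantive gaps.

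The paper works directly with the Gaitsgory--Rozenblyum definition of rigidity: it (i) identifies $\Rep(H)_{\cG_Z, \Ranp}^{\epsilon}$ with $\underline{\operatorname{coHom}}^{\D(X)}(\Rep(H)_{\cG_Z}^{\epsilon}(X), \D(X))$ via Remark \ref{coHom}, (ii) recognizes the multiplication functor as induced from the monoidal structure of the pointwise category $\Rep(H)^{\epsilon}$ and thereby shows it admits a bimodule-compatible continuous right adjoint, and (iii) establishes compactness of the unit from the colimit presentation over the twisted arrow category, using that all transition functors in that colimit admit continuous right adjoints. You are instead trying to produce duals of compact generators explicitly. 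In principle these two routes aim at equivalent characterizations of rigidity, but your version is incomplete in two respects.

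First, you never address compactness of the unit object. This is not a formality: for a category assembled over the Ran space the unit is the ``vacuum,'' and its compactness is exactly the delicate point that forces the paper to use the presentation $\Rep(H)_{\cG_Z, \Ranp}^{\epsilon} \cong \colim_{(I \to J) \in \operatorname{Tw}}\bigotimes_{j}(\Rep(H)_{\cG_Z}^{\epsilon})^{\otimes I_j}(X)$ and to check that all insertion functors have continuous right adjoints. Without this, even a perfect supply of duals of compact generators does not give rigidity in the sense used throughout the paper.

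Second, the step ``This yields a duality functor on generators, which extends to all of $\Rep(H)_{\cG_Z, \Ranp}^{\epsilon}$ by passage to colimits'' does not work: duality is contravariant and turns colimits into limits, so it does not extend along ind-completion. The correct statement is only that the \emph{compact} objects are dualizable (they are built from your generators by finite colimits, shifts, and retracts, under which dualizability is stable); there is no extension to arbitrary objects, nor is one needed. Relatedly, your worry about the contragredient landing in $\Rep(H)_{\cG_Z^{-1}, \Ranp}^{\epsilon^{-1}}$ is somewhat misdirected: once the symmetric monoidal structure on $\Rep(H)_{\cG_Z, \Ranp}^{\epsilon}$ is fixed, a dual of a dualizable object lives in that same category by definition; the real content is exhibiting the evaluation and coevaluation maps in the twisted setting, which you acknowledge but do not carry out. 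Given both gaps, the argument as written does not establish the proposition; I would encourage you to either complete the compactness-of-unit step via a colimit presentation as in the paper, or to switch to the paper's strategy of transporting the right adjoint of multiplication through the $\underline{\operatorname{coHom}}$ identification.
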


\begin{df}
	Let $\mathbf{C}$ be a symmetric monoidal $\D(X)$-linear category, $\mathbf{H}$ be a dualizable symmetric monoidal $\D(X)$-linear category. Parallel to \cite[8.2]{AGKRRV}, there exists a symmetric monoidal category $$\underline{\operatorname{coHom}}^{\D(X)}(\mathbf{C}, \mathbf{H}),$$
	defined by the universal property that for a target symmetric monoidal category $\mathbf{A}$ we have 
$$\Maps_{\DGCat^{\operatorname{SymMon}}}(\underline{\operatorname{coHom}}^{\D(X)}(\mathbf{C}, \mathbf{H}), \mathbf{A}) \cong \Maps_{\D(X)\bfmod^{\operatorname{SymMon}}}(\mathbf{C}, \mathbf{A}\otimes \mathbf{H}).$$
\end{df}

\begin{rem}\cite[11.1.9]{AGKRRV}\label{coHom}
	The symmetric monoidal functor 
	$$\Rep(H)_{\cG_Z, \Ranp}^{\epsilon} \rightarrow \underline{\operatorname{coHom}}^{\D(X)}(\Rep(H)_{\cG_Z}^{\epsilon}(X), \D(X))$$
	is an equivalence.
\end{rem}

\begin{proof}[Proof of Proposition \ref{rigid}]
	Note that multiplication map
	\begin{equation}
		m_{\Ranp}: \underline{\operatorname{coHom}}^{\D(X)}(\Rep(H)_{\cG_Z}^{\epsilon}(X), \D(X)) ^{\otimes 2}  \rightarrow \underline{\operatorname{coHom}}^{\D(X)}(\Rep(H)_{\cG_Z}^{\epsilon}(X), \D(X))
	\end{equation}
	identifies with the map
	\begin{equation}
	\begin{split}
	m_{\Rep(H)_{\cG_Z}^{\epsilon}(X)}: \underline{\operatorname{coHom}}^{\D(X)}\left(\Rep(H)_{\cG_Z}^{\epsilon}(X) \otimes_{\D(X)} \Rep(H)_{\cG_Z}^{\epsilon}(X), \D(X)\right)  \rightarrow \\
	\rightarrow \underline{\operatorname{coHom}}^{\D(X)}\left(\Rep(H)_{\cG_Z}^{\epsilon}(X), \D(X)\right)  
	\end{split}
	\end{equation}
	induced from the monoidal structure on $\Rep(H)^{\epsilon}$. Therefore, since $\Rep(H)^{\epsilon}$ is rigid, we get that $m_{\Ranp}$ admits a continuous right adjoint compatible with the action of $\underline{\operatorname{coHom}}^{\D(X)}(\Rep(H)_{\cG_Z}^{\epsilon}(X), \D(X)) $.
	
	To show that the unit of $\Rep(H)_{\cG_Z, \Ranp}^{\epsilon}$ is compact, recall that 
	\begin{equation}\label{Ran colim}
		 \Rep(H)_{\cG_Z, \Ranp}^{\epsilon} \cong \colim\limits_{(I \rightarrow J) \in \operatorname{Tw}}\bigotimes_{j \in J} (\Rep(H)_{\cG_Z}^{\epsilon})^{\otimes I_j}(X),
	\end{equation}
	where $\operatorname{Tw}$ is the twisted arrow category and $$(\Rep(H)_{\cG_Z}^{\epsilon})^{\otimes I_j}(X)$$
	is the $I_j$-fold tensor product over $D(X)$. Since all connecting arrows in the colimit diagram of (\ref{Ran colim}) admit continuous right adjoints, all insertion functors also admit continuous right adjoints. Thus the unit object of $\Rep(H)_{\cG_Z, \Ranp}^{\epsilon}$ is compact.
\end{proof}

The adjunction
\begin{equation}
\begin{tikzcd}
{\operatorname{mult}}: \Rep(H)_{\cG_Z, \Ranp}^{\epsilon} \otimes \Rep(H)_{\cG_Z, \Ranp}^{\epsilon} \ar[r, rightarrow,  shift right, ""]& \arrow[l,  shift right, ""]\Rep(H)_{\cG_Z, \Ranp}^{\epsilon}: {\operatorname{comult}}
\end{tikzcd}
\end{equation}
as $\Rep(H)_{\cG_Z, \Ranp}^{\epsilon}$-bimodule categories induces
\begin{equation}
\begin{tikzcd}
\mathbf{ind}_{\operatorname{Hecke}}: \D_{\kappa}(\Bun_G) \ar[r, rightarrow,  shift right, ""]& \arrow[l,  shift right, ""]\operatorname{Hecke}_{\sigma}: \mathbf{oblv}_{\operatorname{Hecke}}
\end{tikzcd}
\end{equation}
\begin{ntn}
	We refer to $P_{\sigma}:= \mathbf{ind}_{\operatorname{Hecke}}$ as \emph{metaplectic Beilinson's spectral projector}.
\end{ntn}

\subsubsection{} Note that  $\mathbf{oblv}_{\operatorname{Hecke}}$ is conservative, and hence monadic. The monad is given by the action of 
\begin{equation}
	R_{\sigma}:= (\Id \otimes \operatorname{Ev}_{\sigma})(R_{\Rep(H)_{\cG_Z, \Ranp}^{\epsilon}}) \in \Rep(H)_{\cG_Z, \Ranp}^{\epsilon},
\end{equation}
where $$R_{\Rep(H)_{\cG_Z, \Ranp}^{\epsilon}}\in \Rep(H)_{\cG_Z, \Ranp}^{\epsilon} \otimes \Rep(H)_{\cG_Z, \Ranp}^{\epsilon}$$ is the unit of the self-duality.

Adapting the proof of \cite[Main Theorem 14.4.4]{AGKRRV}, we get 
\begin{pr}\cite[Corollary 14.4.10]{AGKRRV}\label{nilpss}
	Hecke eigensheaves in $\Shv_{\kappa}(\Bun_G) \subset \D_{\kappa}(\Bun_G)$ have nilpotent singular support.
\end{pr}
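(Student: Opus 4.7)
The plan is to adapt the argument of \cite[Main Theorem 14.4.4]{AGKRRV} to the metaplectic setting, taking advantage of the spectral projector $P_{\sigma}$ constructed above together with the metaplectic geometric Satake functor of \cite[9.2]{GL}. Given a Hecke eigensheaf $\cF \in \Shv_{\kappa}(\Bun_G)$ with eigenvalue $\sigma$, there is a canonical identification $\cF \cong \mathbf{oblv}_{\operatorname{Hecke}}(P_{\sigma}(\cF))$, so it suffices to show that the endofunctor
\[
\mathbf{oblv}_{\operatorname{Hecke}} \circ \mathbf{ind}_{\operatorname{Hecke}} \;\cong\; \Sat(R_{\sigma}) \star (-)
\]
of $\D_{\kappa}(\Bun_G)$ carries $\Shv_{\kappa}(\Bun_G)$ into $\Shv_{\kappa, \Nilp}(\Bun_G)$.

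To analyze this convolution, I would use the factorization presentation
\[
\Rep(H)_{\cG_Z, \Ranp}^{\epsilon} \;\cong\; \colim\limits_{(I \to J) \in \operatorname{Tw}} \bigotimes_{j \in J} (\Rep(H)_{\cG_Z}^{\epsilon})^{\otimes I_j}(X)
\]
from (\ref{Ran colim}), combined with the explicit formula $R_{\sigma} = (\Id \otimes \operatorname{Ev}_{\sigma})(R_{\Rep(H)_{\cG_Z, \Ranp}^{\epsilon}})$ and the rigidity of Proposition \ref{rigid}. This writes $R_{\sigma}$ as a colimit over $\Ranp$ of compact factorizable objects assembled from the regular representation of $H$ evaluated at $\sigma$. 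Pushing the colimit through the metaplectic Satake $\Sat$ then expresses $\Sat(R_{\sigma}) \star (-)$ as a colimit indexed by $\Ranp$ of Hecke convolution operators attached to compact objects of $\Rep(H)_{\cG_Z}^{\epsilon}$ supported at varying finite tuples of points of $X$.

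Finally, I would establish the microlocal estimate that controls singular support under these operators: for each $\underline{x} \in \Ranp$ and each compact $V \in \Rep(H)_{\cG_Z}^{\epsilon}$, convolution with $\Sat(V)$ at $\underline{x}$ moves singular support along a Hecke correspondence whose image in $T^*\Bun_G$ forces the Higgs field to be nilpotent at each point of $\underline{x}$; taking the colimit over $\Ranp$ then forces nilpotency at every point of $X$, placing the singular support inside $\Nilp$. Preservation of the holonomic regular-singular property follows because $\Sat(V)$ is such a twisted D-module on $\Gra_{G, \omega}$. The main obstacle is verifying this microlocal estimate in the twisted setting; however, since the metaplectic gerbe $\cG_{\kappa}$ is compatibly pulled back along the two projections from the Hecke stack to $\Bun_G$, the computation of singular support is local on $T^*\Bun_G$ and reduces, point-by-point, to the untwisted estimate of \cite[Main Theorem 14.4.4]{AGKRRV}.
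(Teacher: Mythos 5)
Your overall strategy — reduce to showing the monad $T = \mathbf{oblv}_{\operatorname{Hecke}} \circ \mathbf{ind}_{\operatorname{Hecke}} \cong \Sat(R_\sigma)\star(-)$ carries $\Shv_\kappa(\Bun_G)$ into $\Shv_{\kappa,\Nilp}(\Bun_G)$, then conclude for the eigensheaf — is the right one and matches the line of AGKRRV 14.4.4 that the paper is citing. But there are two problems, one minor and one genuine.

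The minor one: for a Hecke eigensheaf $\cF$ (i.e.\ an object in the essential image of $\mathbf{oblv}_{\operatorname{Hecke}}$), the object $\mathbf{oblv}_{\operatorname{Hecke}}(P_\sigma(\cF)) = R_\sigma \star \cF$ is \emph{not} canonically identified with $\cF$ — the unit $\cF \to T(\cF)$ and the $T$-module structure map $T(\cF)\to\cF$ compose to the identity, so $\cF$ is a \emph{retract} of $T(\cF)$. That is enough for the singular support conclusion, but the identification as written is false.

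The genuine gap is in the microlocal estimate. You write that for a fixed $\underline{x}$, convolution with $\Sat(V)$ at $\underline{x}$ forces the Higgs field to be nilpotent at the points of $\underline{x}$ (this is essentially right, as the Satake kernel has nilpotent singular support on $\Gra_G$, and composing Lagrangian correspondences at $\underline{x}$ constrains only the local jet of the Higgs field there), and then that ``taking the colimit over $\Ranp$ forces nilpotency at every point of $X$.'' This last step is the whole difficulty and, as stated, it runs in the wrong direction. Each individual term $\Sat(R)\star_{\underline{x}}\cF$ has singular support that is nilpotent \emph{only at} $\underline{x}$ and unconstrained away from $\underline{x}$; singular support of a colimit is bounded above by the closure of the \emph{union} of the singular supports of the terms (not by an eventual intersection), so the colimit over $\Ranp$ need not have nilpotent singular support at any fixed $y$. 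The real mechanism in AGKRRV is the \emph{unitality and factorizability} of $R_\sigma$: for any test point $y$ one can split $R_\sigma \star \cF$ as $\Sat(R)\star_y(\cdot)$ applied to something, which is what forces nilpotency at $y$; since $y$ is arbitrary one lands in $\Nilp$. You have not re-established this factoring-out statement in the metaplectic context, and without it the colimit step does not close. This is the piece that must be supplied to turn the sketch into a proof.
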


\subsubsection{} As in \cite[16.2.1]{AGKRRV} we can take a collection of $k$-points $y_i$ on $\Bun_G$ (finitely many in every quasi-compact open) such that for every non-zero $\cF \in \Shv_{\kappa, \Nilp}(\Bun_G)$, the $!$-fiber of $\cF$ at least one point $y_i$ will be non-zero. 

\begin{lm}\label{gen}
	The objects $P_{\sigma}(\delta_{y_i})$ generate $$\Shv_{\kappa, \Nilp}(\Bun_G) \otimes_{\QCoh(\LocSys_H^{\cG_Z})} \Vect_{\sigma}.$$
\end{lm}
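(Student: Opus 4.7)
The plan is to realize $P_{\sigma}$ as a continuous left adjoint whose right adjoint is conservative, and then to invoke the standard categorical principle that such a functor carries a generating family to a generating family.

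First, I would identify the target $\Shv_{\kappa,\Nilp}(\Bun_G)\otimes_{\QCoh(\LocSys_H^{\cG_Z})}\Vect_{\sigma}$ with the full Hecke category $\operatorname{Hecke}_{\sigma}$ in a way that manifestly realises $P_{\sigma}$ as a functor out of $\Shv_{\kappa,\Nilp}(\Bun_G)$. Conjecture \ref{spectral action} combined with Lemma \ref{Rep(H) and LS} rewrites
\[
\operatorname{Hecke}_{\sigma} \;\simeq\; \D_{\kappa}(\Bun_G)\otimes_{\QCoh(\LocSys_H^{\cG_Z})}\Vect_{\sigma},
\]
and by Proposition \ref{nilpss} the forgetful functor $\mathbf{oblv}_{\operatorname{Hecke}}$ takes values in $\Shv_{\kappa,\Nilp}(\Bun_G)\subseteq \D_{\kappa}(\Bun_G)$. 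Combined with Lemma \ref{20.9.1} this gives
\[
\operatorname{Hecke}_{\sigma} \;\simeq\; \Shv_{\kappa,\Nilp}(\Bun_G)\otimes_{\QCoh(\LocSys_H^{\cG_Z})}\Vect_{\sigma},
\]
under which $P_{\sigma}$ descends to a continuous functor $P_{\sigma}^{\Nilp}\colon \Shv_{\kappa,\Nilp}(\Bun_G)\to \Shv_{\kappa,\Nilp}(\Bun_G)\otimes_{\QCoh(\LocSys_H^{\cG_Z})}\Vect_{\sigma}$ whose right adjoint is the corestriction of $\mathbf{oblv}_{\operatorname{Hecke}}$, hence still conservative.

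Next, I would verify that $\{\delta_{y_i}\}$ generates $\Shv_{\kappa,\Nilp}(\Bun_G)$: if $\cF$ is right orthogonal to every $\delta_{y_i}$, then $\Maps(\delta_{y_i},\cF)$, which up to a cohomological shift is the $!$-fiber of $\cF$ at $y_i$, vanishes for each $i$; by the defining property of $\{y_i\}$ recalled just before the statement, this forces $\cF=0$.

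Finally, for any $X$ in the target right orthogonal to all $\{P_{\sigma}(\delta_{y_i})\}$, the adjunction $(P_{\sigma}^{\Nilp},\mathbf{oblv}_{\operatorname{Hecke}})$ yields
\[
\Maps\bigl(\delta_{y_i},\mathbf{oblv}_{\operatorname{Hecke}}(X)\bigr) \;\simeq\; \Maps\bigl(P_{\sigma}^{\Nilp}(\delta_{y_i}),X\bigr) \;=\; 0
\]
for every $i$; by the previous paragraph $\mathbf{oblv}_{\operatorname{Hecke}}(X)=0$, and by conservativity $X=0$. The main obstacle is the first paragraph: checking the naturality of the chain of equivalences so that $P_{\sigma}$ really restricts as claimed, which is where the combined input of Conjecture \ref{spectral action}, Lemma \ref{Rep(H) and LS}, Proposition \ref{nilpss}, and Lemma \ref{20.9.1} is essential. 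Granted that identification, the remainder is a routine categorical argument.
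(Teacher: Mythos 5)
Your overall strategy---use the adjunction $(P_{\sigma},\mathbf{oblv}_{\operatorname{Hecke}})$ to reduce generation to the fact that the $\delta_{y_i}$ detect zero via $!$-fibers---matches the paper's. But there are two problems with the execution.

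The serious one is circularity: you invoke Lemma \ref{20.9.1} to identify $\Shv_{\kappa,\Nilp}(\Bun_G)\otimes_{\QCoh(\LocSys_H^{\cG_Z})}\Vect_{\sigma}$ with $\operatorname{Hecke}_{\sigma}$, but the paper proves Lemma \ref{20.9.1} \emph{using} Lemma \ref{gen}. The paper's proof deliberately avoids this by never using the essential surjectivity half of that equivalence. It uses only (i) Proposition \ref{nilpss} to place $P_{\sigma}(\delta_{y_i})$ in $\Shv_{\kappa,\Nilp}(\Bun_G)\otimes_{\QCoh(\LocSys_H^{\cG_Z})}\Vect_{\sigma}$, and (ii) fully faithfulness of the inclusion of that category into $\operatorname{Hecke}_{\sigma}$, so that $\Maps$ out of $P_{\sigma}(\delta_{y_i})$ can be computed in $\operatorname{Hecke}_{\sigma}$ and then transported by adjunction to $\Maps_{\D_{\kappa}(\Bun_G)}(\delta_{y_i},\mathbf{oblv}_{\operatorname{Hecke}}(\cF))$. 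Since conservativity of $\mathbf{oblv}_{\operatorname{Hecke}}$ and the defining property of the $y_i$ then finish the proof without ever touching essential surjectivity, the circularity is avoided. You should restructure your argument the same way.

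The smaller issue is the construction of $P_{\sigma}^{\Nilp}\colon \Shv_{\kappa,\Nilp}(\Bun_G)\to\Shv_{\kappa,\Nilp}(\Bun_G)\otimes_{\QCoh(\LocSys_H^{\cG_Z})}\Vect_{\sigma}$: the $\delta_{y_i}$ do not lie in $\Shv_{\kappa,\Nilp}(\Bun_G)$ (their singular support is the full cotangent fiber at $y_i$, which is far from nilpotent), so $P_{\sigma}^{\Nilp}(\delta_{y_i})$ is not defined. Keep $P_{\sigma}$ as a functor on $\D_{\kappa}(\Bun_G)$; the objects $P_{\sigma}(\delta_{y_i})$ then land in the nilpotent constructible subcategory \emph{after} applying $P_{\sigma}$, by Proposition \ref{nilpss}, which is all that is needed.
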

\begin{proof}
	For $$\cF \in \Shv_{\kappa, \Nilp}(\Bun_G) \otimes_{\QCoh(\LocSys_H^{\cG_Z})} \Vect_{\sigma}$$ we have 
	\begin{equation}
		\begin{split}
		\Maps_{ \Shv_{\kappa, \Nilp}(\Bun_G) \otimes_{\QCoh(\LocSys_H^{\cG_Z})} \Vect_{\sigma}}(P_{\sigma}(\delta_{y_i}), \cF) \cong \Maps_{ \D_{\kappa}(\Bun_G) \otimes_{\QCoh(\LocSys_H^{\cG_Z})} \Vect_{\sigma}}(P_{\sigma}(\delta_{y_i}), \cF) \cong \\
		\cong  \Maps_{\D_{\kappa}(\Bun_G)}(\delta_{y_i}, \mathbf{oblv}_{\operatorname{Hecke}}(\cF)) \cong \Maps_{\Shv_{\kappa}(\Bun_G)}(\delta_{y_i}, \mathbf{oblv}_{\operatorname{Hecke}}(\cF)).
		\end{split}
	\end{equation}
	Since $\mathbf{oblv}_{\operatorname{Hecke}}$ is conservative we get the result.
\end{proof}

\begin{proof}[Proof of Lemma \ref{20.9.1}]
	Using Proposition \ref{nilpss} and Lemma \ref{gen}, the argument in \cite[20.9.4]{AGKRRV} proves Lemma \ref{20.9.1}.
\end{proof}

\begin{proof}[Proof of Lemma \ref{Rep(H) and LS}]
	Using Remark \ref{coHom}, the functor $\Loc$ identifies with \cite[(12.12)]{AGKRRV}. Then the proof follows \cite[Theorem 12.6.3]{AGKRRV}.
\end{proof}

\bibliographystyle{alpha}
\bibliography{QCOEFFSbibliography}

\end{document}